\numberwithin{equation}{section}
\newtheorem{theorem}{Theorem}[section]
\newtheorem{lemma}[theorem]{Lemma}
\newtheorem{corollary}[theorem]{Corollary}
\newtheorem{conjecture}[theorem]{Conjecture}
\theoremstyle{definition}
\newtheorem{definition}[theorem]{Definition}
\newtheorem{remark}[theorem]{Remark}
\newtheorem{example}[theorem]{Example}
\newcommand{\Ass}{{\rm Ass}}
\newcommand{\bipyr}{{\rm bipyr}}
\newcommand{\conv}{{\rm conv}}
\newcommand{\ffi}{\varphi}
\begin{document}

\title{The Waldschmidt constant for squarefree monomial ideals}
\thanks{Last updated: Revised Version (May 21, 2016)}

\author{Cristiano Bocci}
\address{Department of Information Engineering and Mathematics,
University of Siena\\
Via Roma, 56 Siena, Italy}
\email{cristiano.bocci@unisi.it}

\author{Susan Cooper}
\address{Department of Mathematics\\
North Dakota State University\\
NDSU Dept \#2750\\
PO Box 6050\\
Fargo, ND 58108-6050, USA}
\email{susan.marie.cooper@ndsu.edu}

\author{Elena Guardo}
\address{Dipartimento di Matematica e Informatica\\
Viale A. Doria, 6 \\
95100 - Catania, Italy} \email{guardo@dmi.unict.it}

\author{Brian Harbourne}
\address{Department of Mathematics\\
University of Nebraska\\
Lincoln, NE 68588-0130, USA}
\email{bharbourne1@unl.edu}

\author{Mike Janssen}
\address{Department of Mathematics, Statistics, and Computer Science\\
 Dordt College, Sioux Center, IA 51250, USA}
\email{mike.janssen@dordt.edu}

\author{Uwe Nagel}
\address{Department of Mathematics\\
University of Kentucky\\
715 Patterson Office Tower\\
Lexington, KY 40506-0027, USA}
\email{uwe.nagel@uky.edu}

\author{Alexandra Seceleanu}
\address{Department of Mathematics\\
University of Nebraska\\
Lincoln, NE 68588-0130, USA}
\email{aseceleanu@unl.edu}

\author{Adam Van Tuyl}
\address{Department of Mathematics and Statistics,
McMaster University, Hamilton, ON, L8S 4L8, Canada}
\email{vantuyl@math.mcmaster.ca}

\author{Thanh Vu}
\address{Department of Mathematics\\
University of Nebraska\\
Lincoln, NE 68588-0130, USA}
\email{tvu@unl.edu}

\keywords{Waldschmidt constant, monomial ideals, symbolic powers, graphs, hypergraphs,
fractional chromatic number, linear programming, resurgence}
\subjclass[2010]{Primary 13F20; Secondary 13A02, 14N05}

\begin{abstract}
Given a squarefree monomial ideal $I \subseteq R =k[x_1,\ldots,x_n]$,
we show that $\widehat\alpha(I)$, the Waldschmidt constant of $I$, can
be expressed as the optimal solution to a linear program constructed
from the primary decomposition of $I$. By applying results
from fractional graph theory, we can then express $\widehat\alpha(I)$
in terms of the fractional chromatic number of a hypergraph also
constructed from the primary decomposition of $I$.  Moreover, expressing
$\widehat\alpha(I)$ as the solution to a linear program
enables us to prove a Chudnovsky-like lower bound on $\widehat\alpha(I)$,
thus verifying a conjecture of Cooper-Embree-H\`a-Hoefel for monomial
ideals in the squarefree case.  
As an application, we compute the Waldschmidt constant and the resurgence
for some families of squarefree monomial ideals.  For example,
we determine both constants for unions of general linear subspaces of $\mathbb P^n$ with
few components compared to $n$, and we find the Waldschmidt constant
for the Stanley-Reisner ideal of a uniform matroid.
\end{abstract}

\maketitle


\section{Introduction}\label{sec:intro}

During the last decade, there has been a lot of interest in the
``ideal containment problem'': given a nontrivial homogeneous
ideal $I$ of a polynomial ring $R = k[x_1,\ldots,x_n]$ over a field $k$,
the problem is to determine all positive integer pairs $(m,r)$ such that
$I^{(m)} \subseteq I^r$. Here $I^{(m)}$ denotes the $m$-th symbolic power
of the ideal, while $I^r$ is the ordinary $r$-th power of $I$
(formal definitions are postponed until the next section).
This problem was motivated by the fundamental results of \cite{refELS, refHoHu}
showing that containment holds whenever $m\geq r(n-1)$.
In order to capture more precise information about these containments, Bocci
and Harbourne \cite{BH} introduced the {\em resurgence} of $I$, denoted
$\rho(I)$ and defined as $\rho(I) = \sup\{m/r ~|~ I^{(m)} \not\subseteq I^r \}$.

In general, computing $\rho(I)$ is quite difficult.  Starting
with \cite{BH}, there has been an ongoing
research programme to bound $\rho(I)$ in terms of other
invariants of $I$ that may be easier to compute.
One such bound is in terms of the
Waldschmidt constant of $I$.  Given any nonzero homogeneous ideal $I$
of $R$, we let $\alpha(I) = \min\{ d ~|~ I_d  \neq 0 \}$; i.e.,
$\alpha(I)$ is the smallest degree of a nonzero element in $I$.  The
{\it Waldschmidt constant} of $I$ is then defined to be
\[\widehat\alpha(I) = \lim_{m \rightarrow \infty} \frac{\alpha(I^{(m)})}{m}. \]

This limit exists and was first defined
by Waldschmidt \cite{W} for ideals of finite point sets in the context of complex analysis.  In
the language of projective varieties, Waldschmidt was interested
in determining the minimal degree of a hypersurface that passed through
a collection of points with prescribed multiplicities, that is,
he was interested in determining $\alpha(I^{(m)})$ when $I$ defined a set of
points.  Over the years, $\widehat\alpha(I)$ has appeared in many
guises in different areas of mathematics, e.g., in
number theory \cite{C,W,refW2}, complex analysis \cite{refSk},
algebraic geometry \cite{BH, BH2, refEV,  refSch} and commutative algebra
\cite{HH}.

Bocci and Harbourne's result $\alpha(I)/\widehat\alpha(I) \leq \rho(I)$
(see \cite[Theorem 1.2]{BH}) has renewed interest in computing
$\widehat\alpha(I)$.  For example, Dumnicki \cite{D} finds
lower bounds for $\widehat\alpha(I)$ when $I$ is an ideal of generic
points in $\mathbb{P}^3$,  Dumnicki, et al.\ \cite{DHNSST}
compute  $\widehat\alpha(I)$ when $I$ defines a set of points coming from
a hyperplane arrangement,
Fatabbi, et al.\ \cite{FHL} computed $\widehat\alpha(I)$
when $I$ defines a special union of linear varieties called
inclics,
 M. Baczy\'nska, et al.\ \cite{Betal}
examine $\widehat\alpha(I)$ when $I$ is a bihomogeneous ideal
defining a  finite sets of points
in $\mathbb{P}^1 \times \mathbb{P}^1$ in \cite{Betal}. Guardo,
et al.\ \cite{GHVT2} also  computed
$\widehat\alpha(I)$ when $I$ is the ideal
of general sets of points in $\mathbb{P}^1 \times \mathbb{P}^1$.
In addition, upper bounds on $\widehat\alpha(I)$ were studied in \cite{DHST,GHVT},
along with connections to Nagata's conjecture.
Even though computing $\widehat\alpha(I)$ may be easier than computing
$\rho(I)$, in general, computing the Waldschmidt constant remains a difficult
problem.

In this paper we focus on the computation of $\widehat\alpha(I)$ when
$I$ is a squarefree monomial ideal.  After reviewing the necessary
background in Section 2, in Section 3 we turn to
our main insight: that $\widehat\alpha(I)$
can be realized as the value of the optimal solution of a linear program
(see Theorem \ref{mainresult}).  To set up the required linear program,
we only need to know the minimal primary decomposition of the
squarefree monomial ideal $I$.   The Waldschmidt constant of monomial
ideals (not just squarefree) was first studied in \cite{CEHH}
(although some special cases can be found in \cite{BF,F}) which
formulates the computation of $\widehat\alpha(I)$ as a minimal value problem on a polyhedron
constructed from the generators of $I$.  Our contribution gives a more
effective approach using the well-known simplex method for computing the
Waldschmidt constant (see Remark \ref{alpharemark} for connections to \cite{CEHH}).

The ability to express $\widehat\alpha(I)$
as a solution to a linear program has a number of advantages.
First, in Section 4 we  relate $\widehat\alpha(I)$ to a combinatorial
invariant.  Specifically, we can view a squarefree
monomial ideal $I$ as the edge ideal of a hypergraph $H = (V,E)$
where $V = \{x_1,\ldots,x_n\}$ are the vertices and
$\{x_{j_1},\ldots,x_{j_t}\}$ is an edge (i.e., $\{x_{j_1},\ldots,x_{j_t}\}\in E$)
if and only if $x_{j_1}\cdots x_{j_t}$ is
a minimal monomial generator of $I$.
We then have the following result.

\begin{theorem}[{Theorem \ref{mainresult2}}]\label{them1}
Suppose that $H = (V,E)$ is a hypergraph with a non-trivial edge,
and let $I = I(H)$.  Then
$$\widehat\alpha(I)=\frac{\chi^*(H)}{\chi^*(H)-1}.$$
where $\chi^*(H)$ is the fractional chromatic number of the hypergraph
$H$.
\end{theorem}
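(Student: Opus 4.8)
The plan is to match the linear program of Theorem~\ref{mainresult} against the linear-programming description of the fractional chromatic number, using the bijection between the minimal primes of $I$ and the maximal independent sets of $H$.

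Recall first the combinatorial dictionary for $I=I(H)$: since $I$ is squarefree it is the intersection of its minimal primes, and these are exactly the ideals $P_C=(x_i:i\in C)$ as $C$ ranges over the minimal vertex covers of $H$; equivalently, the index sets of the minimal primes of $I$ are precisely the complements $V\setminus S$ of the maximal independent sets $S$ of $H$ (a vertex set being independent when it contains no edge of $H$). By Theorem~\ref{mainresult}, $\widehat\alpha(I)$ is the optimal value of the linear program
\[ \min\Big\{\ \sum_{i=1}^n a_i \ :\ \sum_{i\in C}a_i\ge 1 \text{ for every minimal vertex cover } C,\ \ a_i\ge 0\ \Big\}. \]
On the other side I will use the well-known linear-programming formulation of the fractional chromatic number together with LP duality: $\chi^*(H)$ is the common optimal value of
\[ \min\Big\{\ \sum_S y_S \ :\ \sum_{S\ni v} y_S\ge 1 \ \forall v\in V,\ \ y_S\ge 0\ \Big\} \;=\; \max\Big\{\ \sum_{v\in V} z_v \ :\ \sum_{v\in S} z_v\le 1 \ \forall S,\ \ z_v\ge 0\ \Big\}, \]
where $S$ runs over the independent sets of $H$; moreover in each program it suffices to let $S$ range only over the \emph{maximal} independent sets, i.e.\ over the complements of the minimal vertex covers of $H$. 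Since $H$ has a non-trivial edge we have $\chi^*(H)>1$; setting aside the degenerate case in which some $\{x_i\}$ is an edge (then $x_i\in I$ and one checks directly via the first program that both sides equal $1$), we have $1<\chi^*(H)<\infty$ and, similarly, $\widehat\alpha(I)>1$.

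The heart of the argument is two inequalities, each obtained by rescaling an optimal solution of one program into a feasible solution of the other. Take an optimum $z$ of the maximization program, so $\sum_v z_v=\chi^*(H)$ and $\sum_{i\in S}z_i\le 1$ for every maximal independent set $S$. For a minimal vertex cover $C$ with complement $S=V\setminus C$ this gives $\sum_{i\in C}z_i=\chi^*(H)-\sum_{i\in S}z_i\ge\chi^*(H)-1>0$, so $a_i:=z_i/(\chi^*(H)-1)$ is feasible for the $\widehat\alpha$-program and has objective value $\chi^*(H)/(\chi^*(H)-1)$; hence $\widehat\alpha(I)\le \chi^*(H)/(\chi^*(H)-1)$. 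Conversely, take an optimum $a$ of the $\widehat\alpha$-program and put $w:=\widehat\alpha(I)=\sum_i a_i$. For each maximal independent set $S=V\setminus C$ we get $\sum_{i\in S}a_i=w-\sum_{i\in C}a_i\le w-1$, so $z_i:=a_i/(w-1)$ is feasible for the maximization program with objective $w/(w-1)$; hence $\chi^*(H)\ge \widehat\alpha(I)/(\widehat\alpha(I)-1)$.

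To finish, let $\phi(x)=x/(x-1)$, a strictly decreasing involution on $(1,\infty)$. The two inequalities just obtained say $\widehat\alpha(I)\le\phi(\chi^*(H))$ and $\chi^*(H)\ge\phi(\widehat\alpha(I))$; applying the decreasing map $\phi$ to the first and using $\phi\circ\phi=\mathrm{id}$ gives $\chi^*(H)\le\phi(\widehat\alpha(I))$, hence equality throughout, and applying $\phi$ once more yields $\widehat\alpha(I)=\phi(\chi^*(H))=\chi^*(H)/(\chi^*(H)-1)$. The substantive content — that $\widehat\alpha(I)$ is computed by a linear program at all — is already supplied by Theorem~\ref{mainresult}, so the step I expect to require the most care here is the bookkeeping: checking that the $\chi^*(H)$ programs may be restricted to \emph{maximal} independent sets so that their index sets coincide exactly with the minimal primes of $I$, and keeping the directions of the inequalities straight through the reciprocal substitutions $a\leftrightarrow z$.
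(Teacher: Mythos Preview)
Your proof is correct and follows essentially the same approach as the paper: both arguments use the dual (maximization) formulation of $\chi^*(H)$, restrict it to \emph{maximal} independent sets (your ``bookkeeping'' step is the paper's Lemma~\ref{maxindeplemma}), and then rescale an optimal solution of each program by $1/(\text{value}-1)$ to obtain a feasible solution of the other, exploiting the complementarity $C=V\setminus S$ between minimal vertex covers and maximal independent sets (the paper encodes this as the matrix identity $A=(\mathbb{I}-B')^T$ in Lemma~\ref{matrixformulation}). Your use of the decreasing involution $\phi(x)=x/(x-1)$ to finish is just a clean repackaging of what the paper calls ``elementary manipulations.''
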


Because the fractional chromatic number of a (hyper)graph is a well-studied
object (e.g., see the book \cite{SU}), Theorem \ref{them1} enables
us to utilize a number of known graph theoretic results to compute
some new values of $\widehat\alpha(I)$. For example,
in Section 6 we compute
$\widehat\alpha(I)$ when $I$ is an edge ideal for various well-known
families of graphs (e.g., bipartite, perfect, cycles).  We also show
how to simplify the proof of the main result of \cite{BF,F}. Moreover, we establish that the Waldschmidt constant of the edge ideal of a graph admits a lower and an upper bound in terms of the chromatic number and the clique number of the graph, respectively.

Second, the reformulation of $\widehat\alpha(I)$ as a linear program
gives us a new proof technique that allows us to prove a
Chudnovsky-like lower bound on $\widehat\alpha(I)$ in Section 5.
Chudnovsky \cite{C}
originally proposed a conjecture on $\widehat\alpha(I)$ in terms of $\alpha(I)$
and $n$ when $I$ defined a set of points in $\mathbb{P}^n$.  Cooper, et al. \cite{CEHH}
proposed a Chudnovsky-like lower bound for all monomial ideals.  We
verify this conjecture in the squarefree case:

\begin{theorem}[{Theorem \ref{mainresult3}}]
Let $I$ be a squarefree monomial ideal with big-height$(I) = e$.
Then
\[\widehat\alpha(I) \geq \frac{\alpha(I)+e-1}{e}.\]
\end{theorem}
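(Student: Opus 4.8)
The plan is to combine the linear‑programming description of $\widehat\alpha(I)$ supplied by Theorem~\ref{mainresult} with an induction on the number $s$ of associated primes of $I$. Write the minimal primary decomposition as $I=P_{A_1}\cap\dots\cap P_{A_s}$ with $P_{A}=(x_j : j\in A)$, so that $e=\max_i|A_i|$; recall that $\alpha(I)$ is the smallest size of a set $C\subseteq\{1,\dots,n\}$ meeting every $A_i$, and that by Theorem~\ref{mainresult} the number $\widehat\alpha(I)$ is the optimal value of the program
\[
\text{minimize } \sum_{j=1}^n y_j \quad\text{subject to } \sum_{j\in A_i}y_j\ge 1 \ (1\le i\le s),\ \ y_j\ge 0 .
\]
For the base case $\alpha(I)=1$ (in particular $s=1$) one argues directly: some variable lies in every $A_i$, hence $\sum_j y_j\ge\sum_{j\in A_1}y_j\ge 1=\tfrac{\alpha(I)+e-1}{e}$, so $\widehat\alpha(I)\ge 1$.

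For the inductive step I would assume $\alpha(I)\ge 2$ and fix an optimal point $y^\ast$ of the program. Since $\sum_{j\in A_1}y^\ast_j\ge 1$ and $|A_1|\le e$, there is a variable $x_v$ with $y^\ast_v\ge 1/e$; note $v$ occurs in $I$, and — because $\alpha(I)\ge 2$ — not in every $A_i$ (otherwise $\{v\}$ would be a cover of size $1$). I would then pass to $I':=\bigcap_{A_i\not\ni v}P_{A_i}$, which is a nonzero proper squarefree monomial ideal with strictly fewer associated primes than $I$. The heart of the argument is to record three elementary reductions: (a) $\widehat\alpha(I)\ge \widehat\alpha(I')+y^\ast_v$, obtained by discarding the $v$‑coordinate of $y^\ast$, which leaves a feasible point for the program of $I'$ of value $\widehat\alpha(I)-y^\ast_v$; (b) $\alpha(I')\ge\alpha(I)-1$, since adjoining $v$ to a minimum cover of $\{A_i : v\notin A_i\}$ gives a cover of all the $A_i$; and (c) $\max\{|A_i| : v\notin A_i\}\le e$, i.e.\ the big‑height does not increase.

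To finish, apply the induction hypothesis to $I'$. Writing $e'$ for its big‑height and using $e'\le e$ together with $\alpha(I')\ge 1$ and reduction (b),
\[
\widehat\alpha(I')\ \ge\ \frac{\alpha(I')+e'-1}{e'}\ =\ 1+\frac{\alpha(I')-1}{e'}\ \ge\ 1+\frac{\alpha(I)-2}{e},
\]
and then reduction (a) with $y^\ast_v\ge 1/e$ yields
\[
\widehat\alpha(I)\ \ge\ 1+\frac{\alpha(I)-2}{e}+\frac1e\ =\ \frac{\alpha(I)+e-1}{e},
\]
which closes the induction (and is sharp, e.g.\ recovering the uniform‑matroid computations in Section 6).

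The step I expect to be the real point of the proof is the choice of the variable $x_v$: one must delete a \emph{heavy} variable, one with $y^\ast_v\ge 1/e$, for the final arithmetic to balance — a generic support variable (with only $y^\ast_v>0$ guaranteed) would not suffice, and it is precisely this $1/e$ of slack, forced by $|A_1|\le e=\operatorname{big-height}(I)$, that produces the "$+e-1$" in the numerator. A secondary point requiring care is checking that $I'$ is still a nonzero proper ideal (this is where $\alpha(I)\ge 2$ is used) and that Theorem~\ref{mainresult} applies to $I'$ verbatim in its own polynomial ring, so that $\widehat\alpha(I')$ is indeed computed by the smaller program.
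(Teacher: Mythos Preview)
Your argument is correct, and it shares the paper's key observation: any constraint $\sum_{j\in A_i}y_j\ge 1$ with $|A_i|\le e$ forces some coordinate $y_v\ge 1/e$. The organization differs. The paper works with a single feasible vector $y$ and, by an internal induction, locates distinct indices $k_1,\dots,k_{\alpha(I)-1}$ with $y_{k_j}\ge 1/e$: at step $j$ the monomial $x_{k_1}\cdots x_{k_{j-1}}$ has degree below $\alpha(I)$, hence lies outside $I$, so some prime avoids all chosen variables and supplies a fresh heavy coordinate; one final such prime then contributes a full $+1$ to $\sum_j y_j$, yielding $\sum_j y_j\ge\frac{\alpha(I)-1}{e}+1$. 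You instead induct on the number $s$ of primes, passing to the auxiliary ideal $I'=\bigcap_{v\notin A_i}P_{A_i}$; your reductions (a)--(c) are precisely the paper's observations rephrased for $I'$, and unrolling your recursion recovers the paper's sequence of heavy variables. The paper's direct route avoids tracking intermediate ideals and establishes the bound for \emph{every} feasible $y$ at once; your recursive packaging makes the arithmetic (the bound drops by $1/e$ per step while $\alpha$ drops by at most one) especially transparent. One wording nit: the parenthetical ``(in particular $s=1$)'' is reversed --- it is $s=1$ that forces $\alpha(I)=1$, not conversely --- but your $\alpha(I)=1$ argument does cover the induction base as intended.
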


\noindent
We give an example to show that this lower bound is sometimes sharp.

In Section 7, we 
illustrate how our new technique leads to new containment results, thus returning to the initial
motivation for studying Waldschmidt constants.  In particular,
in this section we study unions of a small number of general linear varieties,
the Stanley-Reisner ideal of a uniform matroid, and a ``monomial star'',
a squarefree monomial ideal of mixed height.

Although we have only focused on squarefree monomial ideals in this paper, 
our work has implications
for the ideal containment problem for a much larger class of ideals.  In particular,
recent work of  Geramita, et al. \cite{GHMN} has shown, among other things, that
if $\tilde{I}$ is a specialization of a monomial ideal $I$, 
i.e. $\tilde{I}$ is obtained by replacing each variable by a homogeneous
polynomial with the property that these polynomials form a regular sequence, then $\widehat\alpha(\tilde{I})$ and/or $\rho(\tilde{I})$ can be related
to $\widehat\alpha(I)$ and/or $\rho(I)$ of the monomial ideal (see, for example,
\cite[Corollary 4.3]{GHMN}).

\noindent {\bf Acknowledgements.} This project was started at the
Mathematisches Forschungsinstitut Oberwolfach (MFO) as part of the
mini-workshop ``Ideals of Linear Subspaces, Their Symbolic Powers
and Waring Problems'' organized by C. Bocci, E. Carlini, E.
Guardo, and B. Harbourne.     All the authors thank the MFO for
providing a stimulating environment. Bocci acknowledges the
financial support provided by GNSAGA of Indam. Guardo acknowledges
the financial support provided by PRIN 2011.
Harbourne was partially supported by NSA grant number  H98230-13-1-0213.
Janssen was partially
supported by Dordt College. Janssen and Seceleanu received support
from MFO's NSF grant DMS-1049268, ``NSF Junior Oberwolfach
Fellows''. Nagel was partially supported by the Simons Foundation
under grant No. 317096. Van Tuyl acknowledges the financial
support provided by NSERC.


\section{Background Definitions and Results}\label{sec:background}

In this section we review the relevant background.  Unless
otherwise indicated, $R = k[x_1,\ldots,x_n]$ with $k$ an
algebraically closed field of any characteristic. 
 We continue
to use the notation and definitions of the introduction.

\subsection{Squarefree monomial ideals and (hyper)graphs}
An ideal $I \subseteq R$ is a {\it monomial ideal} if $I$ is generated
by monomials.  We say that $I$ is a {\it squarefree
monomial ideal} if it is generated by squarefree monomials,
i.e., every generator has the form $x_1^{a_1}\cdots x_n^{a_n}$
with $a_i \in \{0,1\}$.
When $I$ is a squarefree monomial ideal, the minimal primary
decomposition of $I$ has the form
\[I = P_1 \cap \cdots \cap P_s ~~\mbox{with $P_i = 
\langle x_{j_1},\ldots,x_{j_{s_j}} \rangle$ for
$j=1,\ldots,s$}.\]

A {\it hypergraph} is an ordered pair $H = (V,E)$ where
$V = \{x_1,\ldots,x_n\}$ is the set of {\it vertices}, and
$E$ consists of subsets of $V$ such that if $e_i \subseteq e_j$,
then $e_i = e_j$.  The elements of $E$ are called {\it edges}.
When the hypergraph $H$ is such that $|e_i|=2$ for all $i$, it is also
called a {\it graph}.

Given any hypergraph $H = (V,E)$, we can associate to
$H$ a squarefree monomial ideal $I(H)$ called the {\it edge ideal}
of $H$.  Precisely,
\[I(H) = \langle x_{i_1}x_{i_2} \cdots x_{i_t} ~|~
\{x_{i_1},x_{i_2},\ldots,x_{i_t}\} \in E \rangle.\] This
construction can be reversed, so we have a one-to-one
correspondence between hypergraphs $H$ on $n$ vertices and
squarefree monomial ideals in $n$ variables.

\begin{remark} 
In the above one-to-one correspondence, we need to
be cognizant of the fact that a hypergraph with no edges is different than
a hypergraph whose edges are the isolated vertices.  In
the first case, $H = (V,\emptyset)$ is associated to the zero-ideal
$I(H) = (0)$, while in the second case, $H = (V,\{\{x_1\},\ldots,\{x_n\}\})$
is associated to $I(H) = \langle x_1,\ldots,x_n \rangle$.
In the first case, $\widehat{\alpha}((0))$ is not defined, while
in the second case, $\widehat{\alpha}(I(H)) = 1$ since $I(H)$
is generated by a regular sequence.  
Thus, it is harmless to eliminate these cases by considering 
only hypergraphs that have at least one non-trivial edge.” 
\end{remark}

The associated primes of $I(H)$ are related to the
maximal independent sets and vertex covers of the hypergraph
$H$. We say that $A \subseteq V$ is
an {\it independent set} of $H$ if $e \not\subseteq A$ whenever $e \in E$.
It is {\it maximal} if it is maximal with respect to inclusion.
A subset $U \subseteq V$ is a {\it vertex cover} of a hypergraph
if $e \cap U \neq \varnothing$ whenever $e \in E$.
A vertex cover is {\it minimal} if it is so with respect to
containment.

\begin{lemma}\label{vertexcovers}
Suppose that $H = (V,E)$ is a hypergraph with a non-trivial edge,
and let $I = I(H)$.
Suppose that $I = P_1 \cap
\cdots \cap P_s$ is the minimal primary decomposition of $I$, and
set $W_i = \{ x_j ~|~ x_j \not\in P_i\}$ for $i=1,\ldots,s$.  Then
$W_1,\ldots,W_s$ are the maximal independent sets of $H$.
\end{lemma}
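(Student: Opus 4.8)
The statement is purely a translation between primary decomposition and the combinatorics of the hypergraph, so the plan is to unwind the correspondence between primes of a squarefree monomial ideal and subsets of the vertex set, and then match the lattice-theoretic extremality conditions on both sides.

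First I would recall the standard fact, via duality for squarefree monomial ideals, that the minimal primes $P_i = \langle x_j : x_j \notin W_i\rangle$ of $I(H)$ are in bijection with the minimal vertex covers $V \setminus W_i$ of $H$; this is just the observation that $P = \langle x_j : j \in C\rangle \supseteq I(H)$ if and only if every generator $x_{i_1}\cdots x_{i_t}$ of $I(H)$ lies in $P$, i.e. every edge $e \in E$ meets $C$, i.e. $C$ is a vertex cover — and minimal primes correspond to minimal such $C$. So with $C_i := V \setminus W_i$, the sets $C_1,\dots,C_s$ are exactly the minimal vertex covers of $H$.

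Next I would invoke the elementary complementation principle: $C \subseteq V$ is a vertex cover of $H$ if and only if $V \setminus C$ is an independent set of $H$. Indeed $e \cap C \neq \varnothing$ for all $e \in E$ is the negation of ``$e \subseteq V \setminus C$ for some $e \in E$,'' which is precisely the condition that $V \setminus C$ be independent. Moreover this complementation is inclusion-reversing, so $C$ is a \emph{minimal} vertex cover if and only if $V \setminus C$ is a \emph{maximal} independent set. Applying this to each $C_i$ gives that $W_i = V \setminus C_i$ is a maximal independent set, and since the $C_i$ range over \emph{all} minimal vertex covers, the $W_i$ range over all maximal independent sets. The hypothesis that $H$ has a non-trivial edge guarantees $I \neq (0)$ and that the minimal primary decomposition is non-trivial, so there is nothing degenerate to worry about; one should perhaps also note that an isolated vertex (a vertex in no edge) lies in every maximal independent set, which is consistent with the corresponding variable appearing in no minimal prime.

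I do not expect a serious obstacle here — the result is essentially a dictionary entry. The only point requiring a little care is making sure ``minimal'' on the decomposition side (minimality of the $P_i$ as primes in an irredundant decomposition) genuinely corresponds to ``maximal'' on the combinatorial side, i.e. that no $W_i$ is contained in another $W_j$; this follows because an irredundant primary decomposition of a squarefree monomial ideal uses exactly the minimal primes, which correspond exactly to the minimal vertex covers, which complement to the maximal independent sets. I would close by remarking that one could alternatively derive everything from the Stanley--Reisner correspondence, viewing $I(H)$ as the Stanley--Reisner ideal of the independence complex of $H$, whose facets are the maximal independent sets and whose minimal non-faces are the edges.
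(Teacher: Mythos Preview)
Your proposal is correct and follows essentially the same approach as the paper: both arguments combine the complementation principle (minimal vertex covers are complements of maximal independent sets) with the correspondence between associated primes of $I(H)$ and minimal vertex covers of $H$. The paper simply cites a reference for the latter correspondence, whereas you spell out the easy argument directly; your additional remarks on isolated vertices and the Stanley--Reisner viewpoint are accurate but not needed.
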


\begin{proof}
Any $W$ is a maximal independent
set if and only if $V \setminus W$ is a minimal vertex cover.
We now use the fact that the
associated primes of the edge ideal $I(H)$ correspond to
the minimal vertex covers of $H$ (e.g., see
the proof \cite[Corollary 3.35]{VT}
for edge ideals of graphs, which can be
easily adapted to hypergraphs).
\end{proof}

\subsection{Symbolic Powers}
We now review the definition of symbolic powers. Recall that any homogeneous ideal $I \subseteq R$
has minimal primary decomposition $I = Q_1 \cap \cdots \cap Q_s$
where $\sqrt{Q_i} = P_i$ is a prime ideal.  The set of {\it
associated primes} of $I$, denoted ${\rm Ass}(I)$, is the
set ${\rm Ass}(I) = \{\sqrt{Q_i} ~|~ i=1,\ldots,s\}.$
The {\it minimal primes} of $I$, denoted ${\rm Min}(I)$, is
the set of minimal elements of ${\rm Ass}(I)$, ordered by
inclusion.

\begin{definition}
Let $0 \neq I \subseteq R$ be a homogeneous ideal.
The {\it $m$-th symbolic power} of $I$, denoted $I^{(m)}$, is
the ideal
\[
I^{(m)}= \bigcap_{P\in {\rm Ass}(I)}(I^mR_P \cap R),
\]
where $R_P$ denotes the localization of $R$ at the prime ideal
$P$.
\end{definition}

\begin{remark} In the literature, there is some ambiguity
concerning the notion of symbolic powers. The intersection in
the definition of the symbolic power is sometimes taken over all
associated primes and sometimes just over the minimal primes of $I$.
In general, these two possible definitions yield different results. However, they agree in the case of radical ideals, thus, in particular, also in the case of squarefree monomial ideals.
\end{remark}

We will be concerned with the analysis of generators of minimal
degree in the  symbolic powers $I^{(m)}$ of $I$.
While the general definition
of the $m$-th symbolic power of $I$ is based on localization,
for squarefree monomial ideals the following result will
prove useful.

\begin{theorem}\label{symbolicmonomial} 
Suppose that $I \subseteq R$ is a squarefree monomial ideal with
minimal primary decomposition $I = P_1 \cap \cdots \cap P_s$.
Then for all $m \geq 1$,
\[I^{(m)} = P_1^m \cap \cdots \cap P_s^m.\]
\end{theorem}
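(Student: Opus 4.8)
The plan is to work directly from the definition of the symbolic power, localizing at one associated prime at a time. First I would record that since $I$ is a squarefree monomial ideal it is a radical ideal; hence its minimal primary decomposition $I = P_1 \cap \cdots \cap P_s$ has each component $P_i$ equal to a (monomial) prime, and $\Ass(I) = \{P_1,\ldots,P_s\}$ with each of these primes minimal over $I$ (a radical ideal has no embedded associated primes, so here $\Ass(I) = \mathrm{Min}(I)$, and in particular no $P_j$ is contained in another). Thus the definition of the symbolic power reads $I^{(m)} = \bigcap_{i=1}^s \bigl( I^m R_{P_i} \cap R \bigr)$.

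The heart of the argument is to show, for each fixed $i$, that $I^m R_{P_i} \cap R = P_i^m$. Since localization is exact it commutes with finite intersections and with taking powers, so $I R_{P_i} = \bigcap_{j=1}^s P_j R_{P_i}$. For $j \neq i$ the prime $P_j$ does not contain $P_i$ (distinct minimal primes are incomparable), so I may pick $f \in P_j \setminus P_i$; then $f$ becomes a unit in $R_{P_i}$, which forces $P_j R_{P_i} = R_{P_i}$. Hence $I R_{P_i} = P_i R_{P_i}$ and therefore $I^m R_{P_i} = P_i^m R_{P_i}$.

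It then remains to contract back to $R$: $P_i^m R_{P_i} \cap R = P_i^m$. For this I would invoke the fact that $P_i^m$ is $P_i$-primary, together with the standard correspondence between primary ideals and their extensions/contractions along a localization. The primariness of $P_i^m$ is the one point that needs a genuine argument: $P_i = \langle x_j : j \in A_i\rangle$ is generated by a subset of the variables, hence by a regular sequence, so its associated graded ring $\mathrm{gr}_{P_i}(R)$ is a polynomial ring over the domain $R/P_i$; this is a domain, which yields $\Ass(R/P_i^m) = \{P_i\}$, i.e.\ $P_i^m$ is $P_i$-primary.

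Putting the pieces together gives
\[
I^{(m)} = \bigcap_{i=1}^s \bigl( P_i^m R_{P_i} \cap R \bigr) = \bigcap_{i=1}^s P_i^m,
\]
which is the claim. I expect the only delicate step to be the verification that $P_i^m$ is $P_i$-primary; everything else is formal bookkeeping with flatness of localization. One could alternatively avoid localization entirely and argue combinatorially with the exponent vectors of monomials lying in each $P_i^m$, but the localization argument is shorter and the primariness of $P_i^m$ is classical.
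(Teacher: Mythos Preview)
Your argument is correct. The key steps---that distinct minimal primes become unit ideals upon localizing at one another, that $I^mR_{P_i}=P_i^mR_{P_i}$, and that $P_i^m$ is $P_i$-primary because $P_i$ is generated by a regular sequence of variables---are all valid, and the contraction $P_i^mR_{P_i}\cap R=P_i^m$ then follows from the standard extension/contraction correspondence for primary ideals.

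The paper, by contrast, does not give a proof at all: it simply records that the statement is a special case of \cite[Theorem~3.7]{CEHH}. So your write-up is genuinely different in that it is self-contained rather than a citation. What your approach buys is independence from the external reference and a transparent explanation of why the squarefree hypothesis matters (it forces the $P_i$ to be complete-intersection primes, hence their powers are primary). What the paper's approach buys is brevity and a pointer to the more general monomial-ideal result in \cite{CEHH}, which covers non-squarefree cases as well. Either is acceptable here; yours would be preferable in an expository setting where the reader may not have \cite{CEHH} to hand.
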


\begin{proof}This result is a special case of
\cite[Theorem 3.7]{CEHH}.
\end{proof}

The next result enables us to determine if a particular monomial
belongs to $I^{(m)}$.

\begin{lemma}\label{lem:monomialsinsymbolic}
Let $I \subseteq R$ be a squarefree monomial ideal with minimal
primary decomposition $ I = P_1 \cap P_2 \cap \cdots \cap P_s$
with  $P_j =\langle x_{j_1},\ldots,x_{j_{s_j}} \rangle$ for
$j=1,\ldots,s$.
Then $x_1^{a_1}\cdots x_n^{a_n} \in I^{(m)}$ if and only if
$a_{j_1} + \cdots + a_{j_{s_j}} \geq m$ for $j =1,\ldots,s$.
\end{lemma}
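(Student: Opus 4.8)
The plan is to reduce the membership question to a collection of monomial membership questions in the ordinary powers $P_j^m$, and then observe that membership of a monomial in a power of a monomial prime has a transparent combinatorial description. By Theorem \ref{symbolicmonomial}, we have $I^{(m)} = P_1^m \cap \cdots \cap P_s^m$, so $x_1^{a_1}\cdots x_n^{a_n} \in I^{(m)}$ if and only if $x_1^{a_1}\cdots x_n^{a_n} \in P_j^m$ for every $j = 1, \ldots, s$. Hence it suffices to prove the following claim: if $P = \langle x_{j_1}, \ldots, x_{j_{s_j}} \rangle$, then the monomial $x_1^{a_1}\cdots x_n^{a_n}$ lies in $P^m$ if and only if $a_{j_1} + \cdots + a_{j_{s_j}} \geq m$.

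First I would prove the ``if'' direction of the claim. Assume $a_{j_1} + \cdots + a_{j_{s_j}} \geq m$. The monomial $x_1^{a_1}\cdots x_n^{a_n}$ is divisible by $x_{j_1}^{a_{j_1}} \cdots x_{j_{s_j}}^{a_{j_{s_j}}}$, and the latter is a product of $a_{j_1} + \cdots + a_{j_{s_j}} \geq m$ variables drawn from $\{x_{j_1}, \ldots, x_{j_{s_j}}\}$ (with multiplicity), hence it is a product of at least $m$ elements of the generating set of $P$, and therefore lies in $P^m$. Since $P^m$ is an ideal, the original monomial lies in $P^m$ as well.

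For the ``only if'' direction, suppose $x_1^{a_1}\cdots x_n^{a_n} \in P^m$. A monomial belonging to $P^m$ must be divisible by some product $g_1 \cdots g_m$ where each $g_i \in \{x_{j_1}, \ldots, x_{j_{s_j}}\}$; this is because $P^m$ is generated by such products and a monomial lies in a monomial ideal precisely when it is divisible by one of the generators. Each $g_i$ is one of the variables $x_{j_1}, \ldots, x_{j_{s_j}}$, so $g_1 \cdots g_m = x_{j_1}^{b_1} \cdots x_{j_{s_j}}^{b_{s_j}}$ with $b_1 + \cdots + b_{s_j} = m$. Divisibility forces $a_{j_k} \geq b_k$ for each $k$, so $a_{j_1} + \cdots + a_{j_{s_j}} \geq b_1 + \cdots + b_{s_j} = m$, which is exactly what we wanted.

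The main obstacle, such as it is, is making the two elementary facts about monomial ideals fully precise: that a monomial lies in a monomial ideal if and only if it is divisible by a minimal generator, and that the minimal generators of $P^m$ (for $P$ generated by variables) are exactly the degree-$m$ squarefree-support products of those variables. Both are standard, and one can cite a reference such as \cite{VT} rather than reproving them; with these in hand the argument above is complete once we intersect over $j = 1, \ldots, s$.
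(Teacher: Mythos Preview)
Your proof is correct and follows essentially the same approach as the paper's: both invoke Theorem \ref{symbolicmonomial} to write $I^{(m)} = \bigcap_j P_j^m$, reduce to membership in each $P_j^m$, and then use that a monomial lies in $P_j^m$ exactly when it is divisible by a degree-$m$ product of the generating variables, which is equivalent to the stated inequality on exponents. Your version simply spells out the two directions of this last equivalence in more detail than the paper does.
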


\begin{proof}
By Theorem \ref{symbolicmonomial}, $I^{(m)} = P_1^m \cap \cdots
\cap P_s^m$.  So $x_1^{a_1}\cdots x_n^{a_n} \in I^{(m)}$ if and
only if $x_1^{a_1}\cdots x_n^{a_n}$ is in $P_j^m$ for all $j=1,
\dots, s$. This happens if and only if there exists at least one
generator $f_j\in P_j^m$ such that $f_j$ divides $x_1^{a_1}\cdots
x_n^{a_n}$ (for $j=1, \dots, s$), which is equivalent to requiring
$a_{j_1} + \cdots + a_{j_{s_j}} \geq m$ for $j =1,\ldots,s$.
\end{proof}

\subsection{Waldschmidt constants}
We complete this section by reviewing some useful
results on $\widehat\alpha(I)$, the Waldschmidt constant of
a homogeneous ideal.

\begin{lemma}[Subadditivity]\label{subadditivity}
Let $I$  be a radical
homogeneous  ideal in $R = k[x_1,\ldots,x_n]$. Then
\begin{enumerate}
\item[$(i)$] $\alpha(I^{(c+d)}) \leq \alpha(I^{(c)}) + \alpha(I^{(d)})$
for all positive $c,d \in \mathbb{N}$.
\item[$(ii)$] $\widehat\alpha(I) = \lim_{m \rightarrow \infty} \frac{\alpha(I^{(m)})}{m}$ is the
infimum of $\alpha(I^{(m)})/m$ for $m \in \mathbb{N}$.
\end{enumerate}
\end{lemma}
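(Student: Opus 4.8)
The plan is to prove both parts by exploiting the combinatorial structure of symbolic powers of radical monomial ideals, though in fact only property $(i)$ needs real work. For part $(i)$, I would first fix monomials $f \in (I^{(c)})_{\alpha(I^{(c)})}$ and $g \in (I^{(d)})_{\alpha(I^{(d)})}$ of minimal degree. Since $I$ is a radical (in our setting, squarefree monomial) ideal, Theorem~\ref{symbolicmonomial} gives $I^{(m)} = P_1^m \cap \cdots \cap P_s^m$ for all $m$. I claim $fg \in I^{(c+d)}$: for each associated prime $P_j$, we have $f \in P_j^c$ and $g \in P_j^d$, so $fg \in P_j^c P_j^d = P_j^{c+d}$, and intersecting over all $j$ gives $fg \in I^{(c+d)}$. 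Hence $\alpha(I^{(c+d)}) \le \deg(fg) = \deg f + \deg g = \alpha(I^{(c)}) + \alpha(I^{(d)})$. (For a general radical ideal, one argues the same way using $I^{(m)} = \bigcap_{P} P^m R_P \cap R$ and the fact that products of elements in $P^c R_P \cap R$ and $P^d R_P \cap R$ land in $P^{c+d} R_P \cap R$; in the squarefree monomial case the cleaner statement of Theorem~\ref{symbolicmonomial} suffices.)

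For part $(ii)$, this is now a purely formal consequence of subadditivity: the sequence $(a_m)_{m \ge 1}$ defined by $a_m = \alpha(I^{(m)})$ is subadditive by $(i)$, and it is nonnegative, so Fekete's subadditivity lemma applies. Fekete's lemma states that for a subadditive sequence, $\lim_{m \to \infty} a_m/m$ exists and equals $\inf_{m \ge 1} a_m/m$. In particular the limit defining $\widehat\alpha(I)$ exists (a point used tacitly in the introduction) and equals the stated infimum. I would simply cite Fekete's lemma, or include the standard two-line argument: for fixed $N$ and arbitrary $m$, write $m = qN + r$ with $0 \le r < N$; then $a_m \le q\, a_N + a_r$, so $a_m/m \le a_N/N + a_r/m$, and letting $m \to \infty$ gives $\limsup_m a_m/m \le a_N/N$; since $N$ was arbitrary, $\limsup_m a_m/m \le \inf_N a_N/N \le \liminf_m a_m/m$, forcing equality.

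The only genuine content is the containment $fg \in I^{(c+d)}$ in part $(i)$, and the potential obstacle is making sure the argument is valid for radical ideals in the stated generality rather than only for squarefree monomial ideals. The key fact needed is that symbolic powers multiply correctly: $I^{(c)} \cdot I^{(d)} \subseteq I^{(c+d)}$. This holds for any ideal directly from the localization definition, since if $u \in I^c R_P \cap R$ and $v \in I^d R_P \cap R$ then $uv \in I^{c+d} R_P \cap R$, and this holds for every $P \in \mathrm{Ass}(I)$ (here one should note that for a radical ideal the associated primes used in forming symbolic powers are the minimal primes, so there is no subtlety about embedded components). Thus no restriction beyond what is stated is required, and part $(i)$ extends verbatim; I would present the monomial version since that is all we use later, and remark that the general case is identical.
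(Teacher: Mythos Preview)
Your proposal is correct and follows essentially the same approach as the paper: both establish part $(i)$ via the containment $I^{(c)} I^{(d)} \subseteq I^{(c+d)}$ (the paper phrases this as ``symbolic powers form a graded system'' and cites \cite{L}, while you spell out the localization argument), and both deduce part $(ii)$ from Fekete's subadditivity lemma. The paper's proof is essentially a string of citations, so your write-up is in fact more detailed and self-contained, but the underlying ideas are identical.
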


\begin{proof}
The subadditivity of $\alpha(-)$ is a consequence of the fact that
symbolic powers of any radical homogeneous ideal form a graded system, meaning
that $I^{( c )}I^{( d )}\subseteq I^{( c+d) }$ for all $c,d\geq 0$
(see e.g \cite[Example 2.4.16 (iv)]{L}). The statement in part $(ii)$
then follows from $(i)$ by means of the general principle
of subadditivity in
\cite[Lemma A.4.1]{SU}. See \cite[Remark III.7]{HR} or
\cite[Lemma 2.3.1]{BH} for a version of the result in $(ii)$ and its proof.
Alternatively, use Fekete's Lemma \cite{Fe2} as in \cite{Betal}.
 \end{proof}


\section{The Waldschmidt constant and a linear program}\label{sec:results1}

When $I$ is a squarefree monomial ideal, we show that
$\widehat\alpha(I)$ can be expressed as the value to
a certain linear program arising from the
structure of the associated primes of $I$.
For the convenience of the reader, we review the relevant
definitions concerning linear programming (we have used \cite{SU}
for our reference).

A {\it linear program} (henceforth LP)
is a problem that can be expressed as:

\begin{tabular}{rl}
minimize & ${\bf b}^T{\bf y} $\\
subject to & $A{\bf y} \geq {\bf c}$ and  ${\bf y} \geq {\bf 0}$
\end{tabular}
\hspace{2cm}$(\star)$

\noindent where ${\bf b}$ is an $s$-vector, ${\bf c}$ is an
$r$-vector, ${\bf 0}$ is the zero $r$-vector, and $A$ is an $r
\times s$ real coefficient matrix.  Here, ${\bf d} \geq {\bf e}$
denotes the partial order where the $i$-th coordinate entry of
${\bf d}$ is larger than the $i$-th coordinate entry of ${\bf e}$
for all $i$.  Note that we wish to solve for the $s$-vector ${\bf y}$.
The equation ${\bf b}^T{\bf y}$ is the {\it constraint equation}.
Any ${\bf y}$ that satisfies $A{\bf y} \geq {\bf c}$ and  ${\bf y}
\geq {\bf 0}$ is called a {\it feasible solution}. If ${\bf y}^*$
is a feasible solution that optimizes the constraint equation,
then ${\bf b}^T{\bf y}^*$ is the {\it value} of  LP. Associated to
the LP $(\star)$ is its {\it dual linear program}:

\begin{tabular}{rl}
maximize & ${\bf c}^T{\bf x} $\\
subject to & $A^T{\bf x} \leq {\bf b}$ and  ${\bf x} \geq {\bf 0}$
\end{tabular}
\hspace{2cm}$(\star\star)$

\noindent A fundamental result in linear programming is that both
a linear program and its dual have the exact same value, i.e.,
${\bf c}^T{\bf x}^* = {\bf b}^T{\bf y}^*$ (see \cite[Theorem
A.3.1]{SU}). In particular, we shall find the following fact
useful.

\begin{lemma}\label{LPbounds} 
Consider the LP

\begin{tabular}{rl}
\emph{minimize} & ${\bf b}^T{\bf y} $\\
\emph{subject to} & $A{\bf y} \geq {\bf c}$ \emph{and}  ${\bf y} \geq {\bf 0}$
\end{tabular}

\noindent and suppose that ${\bf y}^*$ is the feasible solution
that gives the value of this LP.  If ${\bf x}$ is any feasible
solution of the associated dual LP, then ${\bf c}^T{\bf x} \leq
{\bf b}^T{\bf y}^*.$
\end{lemma}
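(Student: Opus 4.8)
The plan is to recognize this as the \emph{weak duality} inequality of linear programming, for which I would record two short routes and keep whichever fits the exposition best. The quickest invokes the strong duality theorem already recalled above: let ${\bf x}^*$ be a feasible solution of the dual LP $(\star\star)$ attaining its value. Since $(\star\star)$ is a maximization problem and ${\bf x}$ is feasible for it, ${\bf c}^T{\bf x} \le {\bf c}^T{\bf x}^*$; and by \cite[Theorem A.3.1]{SU} the value ${\bf c}^T{\bf x}^*$ of the dual equals the value ${\bf b}^T{\bf y}^*$ of the primal. Concatenating these gives ${\bf c}^T{\bf x} \le {\bf b}^T{\bf y}^*$.

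Alternatively, I would give a self-contained argument that does not presuppose strong duality. Feasibility of ${\bf y}^*$ for $(\star)$ gives $A{\bf y}^* \ge {\bf c}$ and ${\bf y}^* \ge {\bf 0}$, while feasibility of ${\bf x}$ for $(\star\star)$ gives $A^T{\bf x} \le {\bf b}$ and ${\bf x} \ge {\bf 0}$. Multiplying the vector inequality $A{\bf y}^* \ge {\bf c}$ on the left by the nonnegative row vector ${\bf x}^T$ yields the scalar inequality ${\bf x}^T A{\bf y}^* \ge {\bf x}^T{\bf c} = {\bf c}^T{\bf x}$, and multiplying $A^T{\bf x} \le {\bf b}$ on the left by the nonnegative row vector $({\bf y}^*)^T$ yields $({\bf y}^*)^T A^T{\bf x} \le ({\bf y}^*)^T{\bf b} = {\bf b}^T{\bf y}^*$. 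Since $({\bf y}^*)^T A^T{\bf x}$ and ${\bf x}^T A{\bf y}^*$ are the same scalar (namely $\sum_{i,j} x_i\, A_{ij}\, y^*_j$), chaining the two displayed inequalities gives ${\bf c}^T{\bf x} \le {\bf x}^T A{\bf y}^* \le {\bf b}^T{\bf y}^*$, as desired; note that this version only uses that ${\bf y}^*$ is feasible, not optimal.

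The only thing that needs care is the bookkeeping with transposes and the direction of each inequality as forced by the sign conventions in $(\star)$ and $(\star\star)$; there is no genuine obstacle here, as the lemma is nothing more than the weak-duality half of LP duality, isolated for repeated use later in the paper.
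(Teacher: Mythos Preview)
Your second, self-contained argument is essentially the paper's proof verbatim: the paper records the single chain ${\bf c}^T{\bf x} = {\bf x}^T{\bf c} \leq {\bf x}^TA{\bf y}^* = (A^T{\bf x})^T{\bf y}^* \leq {\bf b}^T{\bf y}^*$, using exactly the nonnegativity and feasibility conditions you isolate, and likewise only needs ${\bf y}^*$ to be feasible. Your first route via strong duality is also correct but is circuitous here, since weak duality is the more elementary fact and does not require the existence of an optimal dual solution.
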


\begin{proof}
For any feasible solution ${\bf x}$,
we have
\[{\bf c}^T{\bf x} = {\bf x}^T{\bf c} \leq {\bf x}^TA{\bf y}^*
= (A^T{\bf x})^T{\bf y}^* \leq {\bf b}^T{\bf y}^*.\]
\end{proof}

We now have the machinery to state and prove the first main result
of this paper.

\begin{theorem}
      \label{mainresult}
Let $I \subseteq R$ be a squarefree monomial ideal with minimal
primary decomposition $ I = P_1 \cap P_2 \cap \cdots \cap P_s$
with  $P_j =\langle x_{j_1},\ldots,x_{j_{s_j}} \rangle$ for
$j=1,\ldots,s$.
Let $A$ be the $s \times n$ matrix where
\[A_{i,j} = \begin{cases}
1 & \mbox{if $x_j \in P_i$} \\
0 & \mbox{if $x_j \not\in P_i$.}
\end{cases}
\]
Consider the following LP:

\begin{tabular}{rl}
\emph{minimize} & ${\bf 1}^T{\bf y} $\\
\emph{subject to} & $A{\bf y} \geq {\bf 1}$ \emph{and}  ${\bf y} \geq {\bf 0}$
\end{tabular}

\noindent
and suppose that ${\bf y}^*$ is a feasible solution that realizes
the optimal value.  Then
\[\widehat\alpha(I) = {\bf 1}^T{\bf y}^*.\]
That is, $\widehat\alpha(I)$ is the value of the LP.
\end{theorem}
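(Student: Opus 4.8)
The plan is to connect the linear program to the asymptotic behavior of $\alpha(I^{(m)})$ by dualizing. First I would write down the dual LP explicitly: by the recipe $(\star\star)$, the dual of the stated LP is to \emph{maximize} $\mathbf{1}^T\mathbf{x}$ subject to $A^T\mathbf{x} \leq \mathbf{1}$ and $\mathbf{x} \geq \mathbf{0}$, where now $\mathbf{x}$ is an $s$-vector indexed by the primes $P_1,\dots,P_s$. The constraint $A^T\mathbf{x}\le \mathbf 1$ reads: for each variable $x_j$, the sum of $x_i$ over all primes $P_i$ containing $x_j$ is at most $1$. By LP duality (\cite[Theorem A.3.1]{SU}), this dual has the same value as the original, so it suffices to show $\widehat\alpha(I)$ equals the value of whichever of the two is more convenient; I would actually want to show directly that $\widehat\alpha(I)$ equals the LP value, using both directions.

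The key observation linking $\alpha(I^{(m)})$ to the LP is Lemma \ref{lem:monomialsinsymbolic}: a monomial $x_1^{a_1}\cdots x_n^{a_n}$ lies in $I^{(m)}$ if and only if $\sum_{x_\ell \in P_i} a_\ell \ge m$ for all $i=1,\dots,s$. Writing $y_\ell = a_\ell/m$, this says precisely that $\mathbf{y} = (a_1/m,\dots,a_n/m)$ is a feasible solution of the LP $A\mathbf{y}\ge \mathbf{1}$, $\mathbf{y}\ge\mathbf 0$, and the degree of the monomial divided by $m$ is $\mathbf{1}^T\mathbf{y}$. Thus $\alpha(I^{(m)})/m$ is exactly the minimum of $\mathbf{1}^T\mathbf{y}$ over feasible $\mathbf{y}$ \emph{whose coordinates are integer multiples of $1/m$}. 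This immediately gives the inequality $\alpha(I^{(m)})/m \ge \mathbf{1}^T\mathbf{y}^*$ for every $m$, hence $\widehat\alpha(I) \ge \mathbf{1}^T\mathbf{y}^*$ by Lemma \ref{subadditivity}(ii).

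For the reverse inequality, I would use rationality of LP solutions: since $A$ and the right-hand side are rational (indeed integral), there is an optimal vertex solution $\mathbf{y}^*$ with rational coordinates. Let $m$ be a common denominator of the entries of $\mathbf{y}^*$. Then $m\mathbf{y}^*$ is an integer vector, so the monomial $\prod_\ell x_\ell^{m y^*_\ell}$ lies in $I^{(m)}$ by Lemma \ref{lem:monomialsinsymbolic}, and it has degree $m\cdot\mathbf{1}^T\mathbf{y}^*$. Hence $\alpha(I^{(m)})/m \le \mathbf{1}^T\mathbf{y}^*$, and combined with Lemma \ref{subadditivity}(ii) this yields $\widehat\alpha(I) \le \mathbf{1}^T\mathbf{y}^*$. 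Putting the two inequalities together gives $\widehat\alpha(I) = \mathbf{1}^T\mathbf{y}^*$.

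The main obstacle — really the only subtle point — is the reverse inequality, specifically justifying that the optimal value of the LP is attained at a rational point and relating that to an honest monomial of the right degree in some symbolic power. One must be careful that the LP is bounded and feasible (feasibility: take $\mathbf{y}$ with a large constant in each coordinate; boundedness below by $0$), so that the optimum is finite and attained at a vertex of the rational polyhedron $\{A\mathbf{y}\ge\mathbf 1,\ \mathbf{y}\ge\mathbf 0\}$, whose vertices are rational because they are solutions of subsystems of the defining linear equations with integer coefficients. One should also note that clearing denominators only \emph{increases} $m$, which is harmless since Lemma \ref{subadditivity}(ii) lets us compute $\widehat\alpha(I)$ as an infimum over all $m$. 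Everything else is bookkeeping translating Lemma \ref{lem:monomialsinsymbolic} into the feasibility constraints.
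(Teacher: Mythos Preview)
Your proposal is correct and follows essentially the same route as the paper: both directions hinge on Lemma~\ref{lem:monomialsinsymbolic} to identify monomials in $I^{(m)}$ with feasible points having coordinates in $\tfrac{1}{m}\mathbb Z$, with the reverse inequality obtained by clearing denominators of a rational optimal solution. The only cosmetic differences are that the paper runs the $\widehat\alpha(I)\ge \mathbf 1^T\mathbf y^*$ direction as a proof by contradiction rather than directly, and simply asserts rationality of $\mathbf y^*$ where you supply the vertex-of-a-rational-polyhedron justification; your opening discussion of the dual LP is not actually used and can be dropped.
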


\begin{proof}
Suppose that $({\bf y}^*)^T = \begin{bmatrix} y^*_1 & y^*_2 & \cdots & y^*_n \end{bmatrix}$ is the feasible solution that realizes the optimal
solution to the LP.  Because the entries of ${\bf y}^*$
are rational numbers, we can write
$({\bf y}^*)^T =
\begin{bmatrix}
\frac{a_1}{b_1} & \frac{a_2}{b_2} & \cdots & \frac{a_n}{b_n}
\end{bmatrix}$  with integers $a_i, b_i$ for $i=1,\ldots,n$.

Set $b = {\rm lcm}(b_1,\ldots,b_n)$.  Then $A(b{\bf y}) \geq {\bf b}$
where
${\bf b}$ is an $s$-vector of $b$'s.  So,
$(b{\bf y})$ is a feasible integer solution
to the system $A{\bf z} \geq {\bf b}$.
In other words, for each $j = 1,\ldots,s$,
\[b\left(\frac{a_{j_1}}{b_{j_1}}+ \cdots + \frac{a_{j_{s_j}}}{b_{j_{s_j}}}\right)
= \frac{b a_{j_1}}{b_{j_1}}+ \cdots + \frac{b a_{j_{s_j}}}{b_{j_{s_j}}}
\geq  b.\]
It then follows by Lemma \ref{lem:monomialsinsymbolic} that
\[x_1^{\frac{b a_1}{b_1}}x_2^{\frac{b a_2}{b_2}}\cdots x_n^{\frac{b a_n}{b_n}}
\in I^{(b)}.\]
Thus,
\[\alpha(I^{(b)}) \leq  \frac{b a_1}{b_1} +
\frac{b a_2}{b_2} + \cdots + \frac{b a_n}{b_n},\]
or equivalently (by Lemma \ref{subadditivity}),
\[\widehat\alpha(I) \leq
\frac{\alpha(I^{(b)})}{b} \leq  \frac{a_1}{b_1} + \frac{a_2}{b_2} +
\cdots + \frac{a_n}{b_n} = {\bf 1}^T{\bf y}^*.\]

To show the reverse inequality, suppose for a contradiction that
$\widehat\alpha(I) < {\bf 1}^T{\bf y^*}$.
By Lemma \ref{subadditivity}
we have
$\widehat\alpha(I) = \inf\left\{ {\alpha(I^{(m)})}/{m}\right\}_{m \in \mathbb{N}}.$
In particular, there must exist some $m$ such that
\[\frac{\alpha(I^{(m)})}{m} < \frac{a_1}{b_1} + \frac{a_2}{b_2} +
\cdots + \frac{a_n}{b_n} = {\bf 1}^T{\bf y}^*.\]
Let  $x_1^{e_1}x_2^{e_2}\cdots x_n^{e_n} \in I^{(m)}$ be a monomial
with $e_1+\cdots + e_n = \alpha(I^{(m)})$.  Then, by
Lemma \ref{lem:monomialsinsymbolic}, we have
\[e_{j_1} + \cdots + e_{j_{s_j}} \geq m ~~\mbox{for all $j=1,\ldots,s$}.\]
In particular, if we divide all the $s$ equations by $m$, we have
\[\frac{e_{j_1}}{m} + \cdots + \frac{e_{j_{s_j}}}{m} \geq 1
~~\mbox{for all $j=1,\ldots,s$}.
\]
But then
\[{\bf w}^T = \begin{bmatrix} \frac{e_{1}}{m} & \cdots & \frac{e_{s}}{m}
\end{bmatrix}^T\]
satisfies $A{\bf w} \geq {\bf 1}$ and ${\bf w} \geq {\bf 0}$.
In other words, ${\bf w}$ is a feasible solution to the LP, and furthermore,
$\frac{\alpha(I^{(m)})}{m} = {\bf 1}^T{\bf w}
<  \frac{a_1}{b_1} + \frac{a_2}{b_2} +
\cdots + \frac{a_n}{b_n} = {\bf 1}^T{\bf y}^*$, contradicting
the fact that ${\bf 1}^T{\bf y}^*$ is the value of the LP.
\end{proof}

\begin{remark}\label{alpharemark}
The set of feasible solutions of the LP in
Theorem \ref{mainresult} is the symbolic polyhedron for the
monomial ideal $I$ as defined in \cite[Definition 5.3]{CEHH}:
$$\mathcal{Q}=\bigcap\limits_{P\in \max\Ass(I)} {\rm conv \ }
\mathcal{L}(Q_{\subseteq P}).$$
Here, $Q_{\subseteq P}$ is the intersection of all primary
ideals $Q_i$ in the primary decomposition of $I$ with
$\sqrt{Q_i}\subseteq P$, $\mathcal{L}(Q_{\subseteq P})$ is the set
of lattice points $a \in \mathbb{N}^n$  with
$x^a = x_1^{a_1}\cdots x_n^{a_n} \in Q_{\subseteq P}$,
and ${\rm conv}(-)$ denotes the convex hull.
When $I$ is a squarefree monomial ideal $I$, then
$\Ass(I)=\max \Ass(I)$
and $Q_{\subseteq P}=P$ for any $P\in \Ass(I)$.  So we have
 $\mathcal{L}(Q_{\subseteq P})= \mathcal{L}( P )=
\{{\bf x} \mid {\bf x} \geq {\bf 0}, a_i \cdot {\bf x}\geq 1\}$,
where $a_i$ is the $i$-th row of the matrix $A$ in Theorem \ref{mainresult}.
Clearly then
 $\bigcap_i\mathcal{L}(Q_{\subseteq P})= \{{\bf x} \mid {\bf x} \geq {\bf 0},
 A  {\bf x}\geq {\bf 1}\}$.

 Furthermore, the optimal value of our LP is the same as
$\alpha( \mathcal{Q})$ as defined in \cite{CEHH} before Corollary 6.2,
thus Theorem \ref{mainresult} is a (useful!) restatement
(with easier proof) of \cite[Corollary 6.3]{CEHH}.
 \end{remark}

\begin{remark}
Because the set of optimal solutions to an integer LP consists of points
with rational coordinates, Theorem \ref{mainresult} allows us to conclude
that the Waldschmidt constant of any squarefree monomial ideal is rational.
The same is true for arbitrary monomial ideals by making use of the symbolic polyhedron described above.
\end{remark}


\section{The Waldschmidt constant in terms of a fractional chromatic number}

As shown in the last section, the Waldschmidt constant
$\widehat\alpha(I)$ of a squarefree monomial ideal $I
\subseteq R = k[x_1,\ldots,x_n]$ is the optimal
value of a linear program.  On the other hand, a squarefree
monomial ideal can also be viewed as the edge ideal
of a hypergraph $H = (V,E)$ where $V = \{x_1,\ldots,x_n\}$
and $\{x_{i_1},\ldots,x_{i_t}\} \in E$ is an edge
if and only if $x_{i_1}\cdots x_{i_t}$ is a minimal generator
of $I$.  We now show that $\widehat\alpha(I)$ can be expressed in
terms of a combinatorial invariant of $H$, specifically,
the fractional chromatic number of $H$.  Recall that we 
are assuming that all our hypergraphs $H = (V,E)$ have a non-trivial edge.

We begin by defining the fractional chromatic number of
a hypergraph $H = (V,E)$.
Set
\[\mathcal{W} = \{ W \subseteq V ~|~ \mbox{$W$ is an independent set of $H$}\}.\]

\begin{definition}\label{fracchromdefn}
Let $H = (V,E)$ be a hypergraph.  Suppose that
$\mathcal{W} = \{W_1,\ldots,W_t\}$ is the set of all independent
sets of $H$.
Let $B$ be the $n \times t$ matrix given by
\[B_{i,j} = \begin{cases}
1 & \mbox{if $x_i \in W_j$} \\
0 & \mbox{if $x_i\not\in W_j$.}
\end{cases}
\]
The optimal
value of the following LP, denoted $\chi^*(H)$,

\begin{tabular}{rl}
minimize & $y_{W_1} + y_{W_2} + \cdots + y_{W_t} = {\bf 1}^T{\bf y}$\\
subject to & $B{\bf y} \geq {\bf 1}$ \emph{and}  ${\bf y} \geq {\bf 0}$
\end{tabular}

\noindent
is the {\it fractional chromatic number of} the
hypergraph $H$.
\end{definition}

\begin{remark} If $H = (V,E)$ is a hypergraph with a non-trivial
edge, then $\chi^*(H)  \neq 1$.
\end{remark}

\begin{remark}
A {\it colouring} of a hypergraph $H = (V,E)$ is an assignment of
a colour to every $x \in V$ so that no edge is mono-coloured.
The minimum number of colours needed to give $H$ a valid colouring
is the {\it chromatic number} of $H$, and is denoted
$\chi(H)$. The value of $\chi(H)$ can also be interpreted as the
value of the optimal integer solution to the LP in the previous
definition.  In other words, the fractional chromatic number
is the relaxation of the requirement that the previous LP have
integer solutions.
\end{remark}

We next give a lemma which may be of independent interest due to its implications on computing fractional chromatic numbers. Our lemma shows that the dual of the LP that defines the fractional chromatic number can be reformulated in terms of a smaller number of constraints.

\begin{lemma}\label{maxindeplemma}
Let $H = (V,E)$ be a hypergraph.  Suppose that
$\mathcal{W'} = \{W_1,\ldots,W_s\}$ is the set of all maximal independent
sets of $H$.
Let $B'$ be the $n \times s$ matrix given by
\[B'_{i,j} = \begin{cases}
1 & \mbox{if $x_i \in W_j$} \\
0 & \mbox{if $x_i\not\in W_j$.}
\end{cases}
\] and let $B$ be the matrix defined in \ref{fracchromdefn}. Then the following two linear programs have the same feasible solution sets and the same optimal values:

\begin{tabular}{rl}
maximize & $w_1 + \cdots + w_n = {\bf 1}^T{\bf w}$ \\
subject to & $B^T{\bf w} \leq {\bf 1}$ \emph{and}  ${\bf w} \geq {\bf 0}$
\end{tabular}
 \qquad
\begin{tabular}{rl}
maximize & $w_1 + \cdots + w_n = {\bf 1}^T{\bf w}$ \\
subject to & $B'^{ T}{\bf w} \leq {\bf 1}$ \emph{and}  ${\bf w} \geq {\bf 0}$
\end{tabular}

\noindent In particular, the fractional chromatic number $\chi^*(H)$ can also be computed as the optimal value of the second linear program.

\end{lemma}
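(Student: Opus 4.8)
The plan is to prove that the two linear programs have literally the same feasible region; the equality of optimal values is then automatic, since they optimize the same objective ${\bf 1}^T{\bf w}$. One inclusion of feasible sets is immediate: the maximal independent sets in $\mathcal{W}'$ are in particular independent sets, so they occur among the columns of $B$; hence the constraint system $B'^{T}{\bf w}\le{\bf 1}$ is a sub-system of $B^T{\bf w}\le{\bf 1}$, and any ${\bf w}$ feasible for the first LP is automatically feasible for the second.

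For the reverse inclusion I would first record the elementary fact that every independent set $W$ of $H$ is contained in some maximal independent set $W^{*}$: the collection of independent sets of $H$ that contain $W$ is a nonempty finite poset under inclusion, hence has a maximal element $W^{*}$, and any independent set containing $W^{*}$ also contains $W$ and therefore lies in this collection, so equals $W^{*}$; thus $W^{*}$ is maximal among all independent sets of $H$. Now let ${\bf w}=(w_1,\dots,w_n)$ be feasible for the second LP, so ${\bf w}\ge{\bf 0}$ and $\sum_{x_i\in W_j}w_i\le 1$ for every maximal independent set $W_j$. Given any independent set $W$, choose a maximal independent set $W^{*}\supseteq W$; since all coordinates of ${\bf w}$ are nonnegative,
$$\sum_{x_i\in W}w_i\;\le\;\sum_{x_i\in W^{*}}w_i\;\le\;1.$$
As $W$ ranges over all independent sets of $H$, this says exactly that ${\bf w}$ satisfies $B^T{\bf w}\le{\bf 1}$, so ${\bf w}$ is feasible for the first LP. Hence the two feasible sets coincide, and so do the two optimal values.

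For the final assertion I would observe that the first LP in the statement is precisely the dual $(\star\star)$ of the linear program defining $\chi^{*}(H)$ in Definition \ref{fracchromdefn} (take $A=B$, ${\bf b}={\bf 1}$, ${\bf c}={\bf 1}$). By strong duality (\cite[Theorem A.3.1]{SU}) its optimal value equals $\chi^{*}(H)$, and by the previous paragraph this is also the optimal value of the second LP.

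There is no serious obstacle here; the one point that needs care is the logical direction of the comparison. A priori the second LP has fewer constraints and hence a larger feasible region, so the content of the lemma is that the extra constraints appearing in the first LP — those coming from non-maximal independent sets — are redundant, which holds precisely because the sign condition ${\bf w}\ge{\bf 0}$ forces each such constraint to be dominated by the constraint attached to any maximal independent set containing the given one.
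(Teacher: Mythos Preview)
Your proof is correct and follows essentially the same argument as the paper: both show that the two feasible regions coincide by observing that the constraints coming from non-maximal independent sets are redundant, since any such constraint is dominated (using ${\bf w}\ge{\bf 0}$) by the constraint of a maximal independent set containing it, and both then invoke LP duality for the final identification with $\chi^*(H)$. If anything, your write-up is slightly more explicit about the role of the nonnegativity condition and about why every independent set extends to a maximal one.
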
 

\begin{proof}
It is clear from the definitions that there is a block decomposition 
\[ 
B =
\begin{bmatrix}
B' & C
\end{bmatrix}
\]
 where $C$ is a $n\times (t-s)$ matrix corresponding to non-maximal independent sets. The feasible set for the first LP is thus given by the constraints $B'^{ T}{\bf w} \leq {\bf 1}$, $C^T{\bf w} \leq {\bf 1}$ and ${\bf w} \geq {\bf 0}$. It is clear that any feasible solution of the first LP is also feasible for the second. For the converse we need to observe that the constraint equations $C^T{\bf w} \leq {\bf 1}$ are all
redundant. To see why, note that any row in $C^T$ corresponds to a
non-maximal independent set $W'$.  So, there is a maximal
independent set $W$ such that $W' \subseteq W$, and if  ${\bf w}$ satisfies the constraint corresponding to the row $W$, it will also have to satisfy the constraint coming from
the row corresponding to $W'$. In particular, this tells us that
$B'^{\ T}{\bf w} \leq {\bf 1}$ implies $C^T{\bf w} \leq {\bf 1}$, and
consequently the two LPs have the same feasible sets. Since the LPs also have the same objective function, their optimal values will be the same. Since the first LP is the dual of the LP in Definition \ref{fracchromdefn}, the common value of these LPs is equal to $\chi^*(H)$.
\end{proof}

Our goal is now to show that if $I$ is any squarefree monomial ideal
of $R$, and if $H$ is the hypergraph such that $I = I(H)$,
then $\widehat\alpha(I)$ can be expressed in terms of $\chi^*(H)$.
To do this, we relate the matrix $A$
with the matrix
$B'$ of Lemma \ref{maxindeplemma}.

\begin{lemma}\label{matrixformulation}
Let $H = (V,E)$ be a hypergraph with edge ideal $I = I(H)$.
Suppose that $W = \{W_1,\ldots,W_s\}$ are the maximal independent
sets of $H$. 
Let $A$ be the $s \times n$ matrix of the LP of Theorem \ref{mainresult}
constructed from $I(H)$, and let $B'$ be the $n \times s$ matrix of
Lemma \ref{maxindeplemma}.  Then
\[ 
B' =(\mathbb{I}-A)^T \mbox { and } A =(\mathbb{I}-B')^T
\]
where $\mathbb{I}$ denotes an appropriate sized matrix with every entry equal to one.
\end{lemma}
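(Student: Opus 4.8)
The plan is to unwind both the definition of the matrix $A$ from Theorem~\ref{mainresult} and the definition of the matrix $B'$ from Lemma~\ref{maxindeplemma}, and then simply match entries. Recall that $A$ is $s \times n$ with $A_{i,j} = 1$ iff $x_j \in P_i$, where $P_1, \ldots, P_s$ is the minimal primary decomposition of $I = I(H)$. By Lemma~\ref{vertexcovers}, if we set $W_i = \{x_j \mid x_j \notin P_i\}$, then $W_1, \ldots, W_s$ are exactly the maximal independent sets of $H$ — so I must first make sure the indexing of the $W_i$'s in Lemma~\ref{matrixformulation} is chosen to agree with this correspondence (i.e. $W_i$ is the complement of the variables appearing in $P_i$). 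Once that is fixed, the matrix $B'$ is $n \times s$ with $B'_{i,j} = 1$ iff $x_i \in W_j$, i.e. iff $x_i \notin P_j$.

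First I would write down the entrywise identity: $(B')^T$ is the $s \times n$ matrix whose $(j,i)$ entry is $B'_{i,j}$, which equals $1$ iff $x_i \notin P_j$, and equals $0$ iff $x_i \in P_j$. Comparing with $A_{j,i}$, which is $1$ iff $x_i \in P_j$, we see $(B')^T_{j,i} = 1 - A_{j,i}$ for every pair $(i,j)$. Writing $\mathbb{I}$ for the $s \times n$ all-ones matrix, this says exactly $(B')^T = \mathbb{I} - A$, equivalently $A = \mathbb{I} - (B')^T$. Transposing the first relation gives $B' = (\mathbb{I} - A)^T = \mathbb{I}^T - A^T$; and since $\mathbb{I}^T$ is the $n \times s$ all-ones matrix (still denoted $\mathbb{I}$ under the convention ``appropriate sized''), and $A^T = (\mathbb{I} - (B')^T)^T = \mathbb{I} - B'$ — wait, I should be careful: from $A = \mathbb{I} - (B')^T$ we get $A^T = \mathbb{I}^T - B'$, so $B' = \mathbb{I}^T - A^T = (\mathbb{I} - A)^T$, which is the first claimed identity; and then $A = \mathbb{I} - (B')^T$ transposes to the stated $A = (\mathbb{I} - B')^T$ after noting $(\mathbb{I} - B')^T = \mathbb{I}^T - (B')^T = \mathbb{I} - A$ — no, that last step needs $(B')^T = \mathbb{I} - A$, which is precisely what we have. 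So the two stated identities are equivalent to the single entrywise fact $A_{j,i} + B'_{i,j} = 1$, and each is obtained from it by a transpose.

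The only genuine content, and the only place where anything could go wrong, is the identification of the index set: one must invoke Lemma~\ref{vertexcovers} to know that the $s$ maximal independent sets of $H$ are precisely the $s$ complements $W_i = V \setminus \{x_j : x_j \in P_i\}$, so that both matrices are indexed by the same set of size $s$ (in particular $|E| \neq \emptyset$ guarantees $s$ is well-defined and the primary decomposition is as stated). Everything after that is a one-line check that ``$x_j \in P_i$'' and ``$x_j \in W_i$'' are complementary conditions. I expect the main (mild) obstacle to be purely notational: making sure the all-ones matrix $\mathbb{I}$ is read with the correct dimensions in each of the two displayed formulas, and that the ordering of the $P_i$ and the ordering of the $W_i$ are synchronized via Lemma~\ref{vertexcovers} before the entrywise comparison is made. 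No deeper input is needed — the lemma is essentially a bookkeeping statement translating between the primary-decomposition description of $I(H)$ and the maximal-independent-set description of $H$.
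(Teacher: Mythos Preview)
Your proposal is correct and follows essentially the same route as the paper: invoke Lemma~\ref{vertexcovers} to match associated primes $P_i$ with maximal independent sets $W_i$ via complementation, then observe that the entrywise relation $A_{j,i}+B'_{i,j}=1$ together with a transpose gives both displayed identities. The paper's proof is simply a terser version of exactly this argument.
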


\begin{proof}
By Lemma \ref{vertexcovers},
a set of variables generates a minimal prime ideal
containing $I$ if and only if its complement is a maximal independent set of $H$, i.e., there is a one-to-one
correspondence between the associated primes $P_1,\ldots,P_s$
of $I(H)$ and the maximal independent sets of $H$.
This complementing is represented by the formula $\mathbb{I}-A$ or $\mathbb{I}-B'$, while transposition
occurs since the variables index rows for $B'$ and columns for $A$.
\end{proof}

\begin{theorem}\label{mainresult2}
Suppose that $H = (V,E)$ is a hypergraph with a non-trivial edge,
and let $I = I(H)$.  Then
$$\widehat\alpha(I)=\frac{\chi^*(H)}{\chi^*(H)-1}.$$
\end{theorem}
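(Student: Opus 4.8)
The plan is to combine Theorem \ref{mainresult}, which expresses $\widehat\alpha(I)$ as the optimal value of the LP with constraint matrix $A$, with Lemma \ref{maxindeplemma} and Lemma \ref{matrixformulation}, which relate $A$ to the matrix $B'$ governing the fractional chromatic number. First I would write down the two relevant LPs side by side: the primal LP of Theorem \ref{mainresult} asks to minimize ${\bf 1}^T{\bf y}$ subject to $A{\bf y} \geq {\bf 1}$, ${\bf y} \geq {\bf 0}$, with optimal value $\widehat\alpha(I)$; and by Lemma \ref{maxindeplemma} together with LP duality, $\chi^*(H)$ equals the optimal value of the LP that maximizes ${\bf 1}^T{\bf w}$ subject to $B'^T{\bf w} \leq {\bf 1}$, ${\bf w} \geq {\bf 0}$, where $B' = (\mathbb{I} - A)^T$ by Lemma \ref{matrixformulation}. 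So the substance of the proof is a direct computation relating the optimal value of a system involving $A$ to the optimal value of a system involving $\mathbb{I} - A$.

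The key algebraic step is the change of variables. If ${\bf y}$ is feasible for the primal LP, I would set ${\bf x} = {\bf y}/({\bf 1}^T{\bf y})$, so that ${\bf 1}^T{\bf x} = 1$ and $A{\bf x} \geq \tfrac{1}{{\bf 1}^T{\bf y}}{\bf 1}$; equivalently, writing $v = {\bf 1}^T{\bf y}$, we have $A{\bf x} \geq \tfrac1v {\bf 1}$ with ${\bf 1}^T {\bf x} = 1$, which says $(\mathbb{I} - A){\bf x} \leq (1 - \tfrac1v){\bf 1}$ since each row of $\mathbb{I}$ dotted with ${\bf x}$ is $1$. Then ${\bf w} = \tfrac{1}{1 - 1/v}{\bf x} = \tfrac{v}{v-1}{\bf x}$ satisfies $(\mathbb{I}-A){\bf w} \leq {\bf 1}$, i.e. $B'^T {\bf w} \leq {\bf 1}$ (using $B' = (\mathbb{I}-A)^T$), with ${\bf w} \geq {\bf 0}$ and ${\bf 1}^T{\bf w} = \tfrac{v}{v-1}$. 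Since the function $t \mapsto \tfrac{t}{t-1}$ is decreasing for $t > 1$, minimizing $v = {\bf 1}^T{\bf y}$ over primal-feasible ${\bf y}$ corresponds to maximizing $\tfrac{v}{v-1} = {\bf 1}^T{\bf w}$ over the associated ${\bf w}$. Running this correspondence in reverse (given ${\bf w}$ feasible for the $\chi^*$-LP with value $u = {\bf 1}^T{\bf w} > 1$, set ${\bf y} = \tfrac{1}{u-1}{\bf w}$, which one checks is primal-feasible with ${\bf 1}^T{\bf y} = \tfrac{u}{u-1}$) shows the correspondence is a bijection between feasible solutions of positive objective value, and it carries optimum to optimum. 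Hence $\widehat\alpha(I) = \tfrac{\chi^*(H)}{\chi^*(H)-1}$.

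I expect the main obstacle to be bookkeeping around the edge cases rather than the core computation: I must be sure that $v = {\bf 1}^T{\bf y} > 1$ and $u = {\bf 1}^T{\bf w} > 1$ so that the divisions $\tfrac{1}{v-1}$ and $\tfrac{1}{u-1}$ are legitimate and positive. For the primal side, $v \geq 1$ is automatic because any feasible ${\bf y}$ satisfies ${\bf 1}^T{\bf y} \geq \alpha(I^{(m)})/m \geq 1/1$... more carefully, $\widehat\alpha(I) \geq 1$ always, and the hypothesis that $H$ has a non-trivial edge forces strict inequality $\widehat\alpha(I) > 1$ (a non-trivial edge means some $P_i$ has $s_i \geq 2$, so the single-variable monomials $x_j$ of degree $1$ cannot lie in $I^{(1)}$, giving $\alpha(I) \geq 2$ hence $\widehat\alpha(I) \geq \alpha(I)/\alpha(I^{(1)}) \cdot \ldots$ — I would instead just invoke the already-stated Remark that $\chi^*(H) \neq 1$, equivalently $\chi^*(H) > 1$ since $\chi^*(H) \geq 1$ always, and argue symmetrically that $\widehat\alpha(I) > 1$). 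I should also double-check the direction of the inequality flip when passing from $A{\bf x} \geq \tfrac1v{\bf 1}$ to $(\mathbb{I}-A){\bf x} \leq (1-\tfrac1v){\bf 1}$: this uses that $\mathbb{I}{\bf x} = ({\bf 1}^T{\bf x}){\bf 1} = {\bf 1}$ entrywise, which holds precisely because I normalized ${\bf 1}^T{\bf x} = 1$. Once these sign and positivity checks are in place, the theorem follows immediately from Theorem \ref{mainresult} and Lemma \ref{maxindeplemma}.
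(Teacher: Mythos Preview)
Your proposal is correct and takes essentially the same approach as the paper. The paper's proof also passes between the two LPs via the scaling ${\bf w}^* \mapsto \frac{1}{\chi^*(H)-1}{\bf w}^*$ and ${\bf y}^* \mapsto \frac{1}{\widehat\alpha(I)-1}{\bf y}^*$, obtaining the two inequalities separately rather than packaging them as a bijection; your intermediate normalization ${\bf x} = {\bf y}/v$ is a cosmetic device that makes the identity $\mathbb{I}{\bf x} = {\bf 1}$ transparent, but the underlying transformation and the use of Theorem~\ref{mainresult}, Lemma~\ref{maxindeplemma}, and Lemma~\ref{matrixformulation} are identical.
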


\begin{proof}
Consider  LP introduced in Lemma \ref{maxindeplemma}, namely

\begin{tabular}{rl}
maximize & $w_1 + \cdots + w_n = {\bf 1}^T{\bf w}$ \\
subject to & $B'^T{\bf w} \leq {\bf 1}$ \emph{and}  ${\bf w} \geq {\bf 0}$.
\end{tabular}

By Lemma \ref{maxindeplemma}, the
optimal value of this LP is  $\chi^*(H)$.
Let ${\bf w}^*$ denote an optimal solution for this LP.
We claim that $\frac{1}{\chi^*(H)-1}{\bf w}^*$ is a feasible solution
for the LP defining $\widehat\alpha(I)$. Indeed, using Lemma
\ref{matrixformulation}, we have
\begin{eqnarray*}
{\bf 1} & = & \frac{1}{\chi^*(H)-1} (\chi^*(H)\mathbf{1}-\mathbf{1})
 \leq   \frac{1}{\chi^*(H)-1} (\mathbb{I}{\bf w}^*-B'^T{\bf w}^*)\\
& = &
\frac{1}{\chi^*(H)-1} (\mathbb{I}-B')^T{\bf w}^*
=  \frac{1}{\chi^*(H)-1}A{\bf w}^*.
\end{eqnarray*}
In particular, ${\bf 1} \leq A\left(\frac{1}{\chi^*(H)-1}{\bf
w}^*\right)$, where ${\bf 1}$ is an appropriate sized vector of
$1$'s. Thus $$\widehat\alpha(I)\leq \frac{1}{\chi^*(H)-1}(w_1 +
\cdots + w_n)= \frac{\chi^*(H)}{\chi^*(H)-1}.$$

A similar computation shows that, if ${\bf y}^*$
is the optimal solution for the LP

\begin{tabular}{rl}
minimize & $y_1 + \cdots + y_n = {\bf 1}^T{\bf y}$ \\
subject to & $A{\bf y} \geq {\bf 1}$ and  ${\bf y} \geq {\bf 0}$
\end{tabular}

\noindent
that is, ${\bf 1}^T{\bf y}^* = \widehat\alpha(I)$,
then $\frac{1}{\widehat\alpha(I)-1}{\bf y}^*$ is a feasible solution
for the dual LP described in the beginning of this proof.
Indeed, using Lemma \ref{matrixformulation} we have
\begin{eqnarray*}
B'^T\left( \frac{1}{\widehat\alpha(I)-1}{\bf y}^* \right) & = &
(\mathbb{I}-A)\left( \frac{1}{\widehat\alpha(I)-1}{\bf y}^* \right) \\
& = & \frac{1}{\widehat\alpha(I)-1}(\mathbb{I} - A){\bf y}^*=\frac{1}{\widehat\alpha(I)-1}
\left(\widehat\alpha(I){\bf 1} - A{\bf y}^*\right).
\end{eqnarray*}
Because $A{\bf y}^* \geq {\bf 1}$, we now have
$
B'^T\left( \frac{1}{\widehat\alpha(I)-1}{\bf y}^* \right) \leq  {\bf 1}.
$
Thus Lemma \ref{LPbounds} yields the inequality
 $$\chi^*(H)\geq \frac{1}{\widehat\alpha(I)-1}(y_1 + \cdots + y_n)
=\frac{\widehat\alpha(I)}{\widehat\alpha(I)-1}$$ and by elementary manipulations this inequality
is equivalent to $\widehat\alpha(I)\geq\frac{\chi^*(H)}{\chi^*(H)-1}$.
\end{proof}

We end this section with an application that illustrates the power of Theorem \ref{mainresult2}. 

\begin{corollary}\label{disjoint}
Suppose that $I$ and $J$ are two squarefree monomial ideals
of the ring $R = k[x_1,\ldots,x_n,y_1,\ldots,y_n]$.
Furthermore, suppose that $I$ is generated
by monomials only in the $x_i$'s and
$J$ is generated by monomials only in the $y_j$'s.
Then
\[\widehat\alpha(I+J) = \min\{\widehat\alpha(I),\widehat\alpha(J)\}.\]
\end{corollary}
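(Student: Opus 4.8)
The plan is to derive this from Theorem~\ref{mainresult2} together with the behaviour of the fractional chromatic number under disjoint unions of hypergraphs. Write $I = I(H_I)$ with $H_I$ a hypergraph on $\{x_1,\dots,x_n\}$ and $J = I(H_J)$ with $H_J$ a hypergraph on $\{y_1,\dots,y_n\}$; we may assume each of $H_I$, $H_J$ has a non-trivial edge (otherwise the corresponding ideal contains a variable, its Waldschmidt constant is $1$, and the asserted formula is checked directly from the linear program of Theorem~\ref{mainresult}, using that each row of its constraint matrix has at least one $1$). Since $I$ and $J$ are generated in disjoint sets of variables, the minimal generators of $I+J$ are exactly those of $I$ together with those of $J$, so the hypergraph $H$ with $I(H) = I+J$ is the disjoint union $H_I \sqcup H_J$: the vertex set is $\{x_1,\dots,x_n\}\cup\{y_1,\dots,y_n\}$ and every edge lies entirely inside one of the two parts.

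The crux is to prove that $\chi^*(H_I \sqcup H_J) = \max\{\chi^*(H_I),\chi^*(H_J)\}$. The starting observation is that $S$ is independent in $H_I \sqcup H_J$ if and only if $S \cap \{x_1,\dots,x_n\}$ is independent in $H_I$ and $S \cap \{y_1,\dots,y_n\}$ is independent in $H_J$. The inequality $\chi^*(H_I \sqcup H_J) \ge \chi^*(H_I)$ (and symmetrically for $H_J$) follows because, given a feasible solution of the LP of Definition~\ref{fracchromdefn} for $H_I \sqcup H_J$, transferring the weight of each independent set $W$ onto $W \cap \{x_1,\dots,x_n\}$ yields a feasible solution for $H_I$ of no greater value. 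For the reverse inequality, assume $\chi^*(H_I) \ge \chi^*(H_J)$, take optimal weightings $\{\lambda_W\}$ on the independent sets of $H_I$ and $\{\mu_{W'}\}$ on those of $H_J$, and enlarge $\{\mu_{W'}\}$ by putting the extra mass $\chi^*(H_I) - \chi^*(H_J)$ on the empty set, so that both families now have total mass $\chi^*(H_I)$. Viewing the two families as the lengths of two partitions of an interval of length $\chi^*(H_I)$ and passing to their common refinement produces a weighting on independent sets of the form $W \cup W'$, with total mass $\chi^*(H_I)$, under which every vertex (in either part) receives total mass at least $1$; hence $\chi^*(H_I \sqcup H_J) \le \chi^*(H_I)$.

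To finish, since $H$, $H_I$, $H_J$ all have a non-trivial edge, Theorem~\ref{mainresult2} gives $\widehat\alpha(I+J) = g(\chi^*(H))$, $\widehat\alpha(I) = g(\chi^*(H_I))$, $\widehat\alpha(J) = g(\chi^*(H_J))$, where $g(t) = t/(t-1) = 1 + 1/(t-1)$. As $g$ is strictly decreasing on $(1,\infty)$ and $\chi^*(H) = \max\{\chi^*(H_I),\chi^*(H_J)\}$, the outer maximum becomes a minimum: $\widehat\alpha(I+J) = g(\max\{\chi^*(H_I),\chi^*(H_J)\}) = \min\{g(\chi^*(H_I)),g(\chi^*(H_J))\} = \min\{\widehat\alpha(I),\widehat\alpha(J)\}$.

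I expect the main obstacle to be the reverse inequality $\chi^*(H_I \sqcup H_J) \le \max\{\chi^*(H_I),\chi^*(H_J)\}$ — specifically, making the ``common refinement'' of the two optimal fractional colourings rigorous rather than relying on the interval picture. A robust alternative, which also covers the degenerate cases uniformly, is to argue entirely inside the linear program of Theorem~\ref{mainresult}: one checks that the minimal primes of $I+J$ are precisely the primes $P+Q$ with $P \in {\rm Min}(I)$ and $Q \in {\rm Min}(J)$, so the constraint matrix of $I+J$ is a ``block sum''. Padding an optimal solution for $I$ by zeros on the $y$-variables (and symmetrically) gives $\widehat\alpha(I+J) \le \min\{\widehat\alpha(I),\widehat\alpha(J)\}$; for the reverse, given an optimal solution for $I+J$, let $a$ and $b$ be the smallest row-sums achieved by its $x$-part and its $y$-part respectively (so $a+b \ge 1$ from some single constraint), and rescale each part by $1/a$, $1/b$ to land in the feasible regions for $I$ and $J$, yielding $\widehat\alpha(I+J) \ge a\,\widehat\alpha(I) + b\,\widehat\alpha(J) \ge (a+b)\min\{\widehat\alpha(I),\widehat\alpha(J)\} \ge \min\{\widehat\alpha(I),\widehat\alpha(J)\}$ (the cases $a=0$ or $b=0$ being even more direct).
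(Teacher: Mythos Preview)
Your argument is correct and follows essentially the same route as the paper: both proofs identify $I+J$ with the edge ideal of the disjoint union $H_I \sqcup H_J$, invoke $\chi^*(H_I \sqcup H_J) = \max\{\chi^*(H_I),\chi^*(H_J)\}$, and then apply Theorem~\ref{mainresult2} together with the monotonicity of $t \mapsto t/(t-1)$; the paper simply asserts the fractional-chromatic-number identity as known, whereas you supply a proof of it. (For comparison, the paper also records a second, purely algebraic proof as Corollary~\ref{disjoint2}, based on the ``symbolic binomial theorem'' of Theorem~\ref{LemmaSymbolicBinomialTheorem}; your sketched LP alternative via the block structure of the constraint matrix of $I+J$ is yet a third correct approach.)
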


\begin{proof}
We can view $I$ as the edge ideal of a hypergraph $H$ on
the vertices $\{x_1,\ldots,x_n\}$ and $J$ as the edge
ideal of a hypergraph $K$ on the vertices $\{y_1,\ldots,y_n\}$.
Thus $I+J$ is the edge ideal of the hypergraph $H \cup K$ where
$H$ and $K$ are disjoint.  But then
\[\chi^*(H \cup K) = \max\{\chi^*(H),\chi^*(K)\},\]
which is equivalent to the statement
\[\frac{\chi^*(H \cup K)}{\chi^*(H \cup K) -1}
= \min\left\{
\frac{\chi^*(H)}{\chi^*(H)-1},\frac{\chi^*(K)}{\chi^*(K)-1}
\right\}.\]
Now apply Theorem \ref{mainresult2}.
\end{proof}


\section{A Chudnovsky-like lower bound on $\widehat\alpha(I)$}

Chudnovsky \cite{C} first proposed a conjectured lower bound on
$\widehat\alpha(I)$ when $I$ is the ideal of a set of points in a
projective space.  Motivated by this conjecture, Cooper, et al.
\cite{CEHH} formulated an analogous conjecture for all monomial
ideals.  Recall that the big height of $I$, denoted big-height$(I)$, is the maximum of the
heights of $P \in {\rm Ass}(I)$.

\begin{conjecture}[{\cite[Conjecture 6.6]{CEHH}}]
Let $I$ be a monomial ideal with big-height$(I) = e$. Then
\[\widehat\alpha(I) \geq \frac{\alpha(I)+e-1}{e}.\]
\end{conjecture}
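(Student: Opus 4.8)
The plan is to prove the conjectured Chudnovsky-like bound in the squarefree case by exploiting Theorem~\ref{mainresult}, which realizes $\widehat\alpha(I)$ as the optimal value of the LP with constraint matrix $A$. Let $I = P_1 \cap \cdots \cap P_s$ be the minimal primary decomposition, and recall $\alpha(I) = \min_j s_j$ is the size of the smallest prime (equivalently, the smallest edge of the associated hypergraph, since each minimal generator $x_{i_1}\cdots x_{i_t}$ has degree $t$ and $\alpha(I)$ is the least such $t$), while $e = \operatorname{big-height}(I) = \max_j s_j$ is the size of the largest prime. The strategy is to bound the optimal value of the \emph{dual} LP from below; by weak duality (Lemma~\ref{LPbounds}), any feasible solution of the dual LP $A^T\mathbf{x} \le \mathbf{1}$, $\mathbf{x}\ge \mathbf{0}$ gives a lower bound on $\widehat\alpha(I)$ via $\mathbf{1}^T\mathbf{x} \le \widehat\alpha(I)$.

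First I would exhibit an explicit feasible dual solution. The natural candidate is the uniform vector $\mathbf{x} = \tfrac{1}{e}\mathbf{1} \in \mathbb{R}^n$: for each prime $P_j$, the $j$-th constraint of $A^T\mathbf{x} \le \mathbf{1}$ reads $\tfrac{1}{e}\,|\{i : x_i \in P_j\}| = \tfrac{s_j}{e} \le 1$, which holds since $s_j \le e$. This feasible solution yields only $\widehat\alpha(I) \ge \tfrac{n}{e}$, which is generally weaker than $\tfrac{\alpha(I)+e-1}{e}$, so a pure uniform choice is not enough. The fix is to select one prime $P_{j_0}$ of minimal size $s_{j_0} = \alpha(I)$ and perturb: put weight a bit more than $\tfrac1e$ on the variables appearing in $P_{j_0}$ and weight a bit less on some variables outside $P_{j_0}$, keeping the total large. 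Concretely, I would try $x_i = \tfrac{1}{e}$ for the $\alpha(I)$ variables in $P_{j_0}$ together with weight chosen so that the contribution from $P_{j_0}$ is tight, and arrange the remaining variables so every other constraint $\sum_{x_i \in P_j} x_i \le 1$ still holds. The precise bookkeeping is where the $\alpha(I) + e - 1$ numerator should emerge: if variables in $P_{j_0}$ get value $\tfrac{1}{e}$ and exactly those outside contribute a further total of $\tfrac{e-1}{e}$ distributed so no other prime is overloaded, one gets $\mathbf{1}^T\mathbf{x} = \tfrac{\alpha(I)}{e} + \tfrac{e-1}{e} = \tfrac{\alpha(I)+e-1}{e}$.

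The main obstacle will be verifying feasibility of this perturbed dual vector against \emph{all} the other primes $P_j$ simultaneously — a prime $P_j$ could contain many of the heavily-weighted variables of $P_{j_0}$ as well as extra variables, potentially violating $\sum_{x_i \in P_j} x_i \le 1$. To handle this I would instead take the cleaner route: set $x_i = \tfrac{1}{e}$ for every variable and then \emph{add} $\tfrac{e-1}{e}\cdot\tfrac{1}{\alpha(I)}$ — no, more robustly, one should argue via a minimal-degree monomial. An alternative and probably safer approach: use Lemma~\ref{lem:monomialsinsymbolic} directly. Take a minimal generator corresponding to a smallest edge and any monomial $F \in I$ of degree $\alpha(I)$; for the symbolic power $I^{(m)}$ one knows $\alpha(I^{(em)}) \le \alpha(I^{(em-e+1)}) + (e-1)\cdot(\text{something})$, and combining the crude containment $I^{(a)} \subseteq I^{\lceil a/e \rceil}$ (valid for squarefree monomial ideals since each $P_j$ has height $\le e$) with subadditivity (Lemma~\ref{subadditivity}) gives $\alpha(I^{(em)}) \ge $ a bound in terms of $\alpha(I^{m})\ge m\,\alpha(I)$ adjusted by the defect. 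I would pursue whichever of these makes the inequality $\sum_{x_i\in P_j}x_i \le 1$ transparent for all $j$; the LP/duality formulation is the conceptually right framework, and once a valid dual point with objective value $\tfrac{\alpha(I)+e-1}{e}$ is pinned down, Lemma~\ref{LPbounds} closes the argument immediately. I would also include a small example (e.g., a suitable star-shaped hypergraph or a union of coordinate subspaces) showing the bound is attained, to justify the sharpness claim following the theorem statement.
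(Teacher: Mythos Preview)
Your proposal has a genuine gap rooted in two basic errors, and the remainder is too vague to constitute a proof.

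First, your identification $\alpha(I) = \min_j s_j$ is incorrect. The integers $s_j$ are the heights of the associated primes $P_j$, so $\min_j s_j = \operatorname{height}(I)$, not $\alpha(I)$. The minimal generators of $I$ correspond to minimal vertex covers of the hypergraph whose edges are the sets $\{x_{j_1},\dots,x_{j_{s_j}}\}$, and $\alpha(I)$ is the size of a smallest such cover; this has no simple relation to $\min_j s_j$. For instance, $I=(x_1)\cap(x_2,x_3)=(x_1x_2,x_1x_3)$ has $\alpha(I)=2$ but $\min_j s_j=1$.

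Second, your dual LP is set up with the wrong index set. Since $A$ is $s\times n$, the dual variable $\mathbf{x}$ satisfying $A^T\mathbf{x}\le\mathbf{1}$ lives in $\mathbb{R}^s$ (one coordinate per prime), and the $n$ constraints are indexed by \emph{variables}: the $i$-th reads $\sum_{j:\,x_i\in P_j} x_j\le 1$. Your vector $\tfrac{1}{e}\mathbf{1}\in\mathbb{R}^n$ is in the wrong space, and your constraint check ``$s_j/e\le 1$'' is neither the primal nor the dual constraint. Even after correcting the dimension to $\tfrac{1}{e}\mathbf{1}\in\mathbb{R}^s$, feasibility would require that each variable lie in at most $e$ primes, which is false in general (for $I_{n+1,c}$ each variable lies in $\binom{n}{c-1}$ primes, typically much larger than $e=c$). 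Your perturbation idea, as written, is a perturbation of weights on variables rather than on primes, so it does not address the dual at all; and the subsequent alternative via Lemma~\ref{lem:monomialsinsymbolic} and subadditivity is only gestured at.

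The paper's argument stays on the \emph{primal} side and proves directly that every feasible $\mathbf{y}$ satisfies $\sum_i y_i \ge \frac{\alpha(I)+e-1}{e}$. The key combinatorial idea is this: one builds, inductively, distinct indices $k_1,\dots,k_{\alpha(I)-1}$ with $y_{k_i}\ge \tfrac{1}{e}$. At each step the monomial $x_{k_1}\cdots x_{k_{j-1}}$ has degree less than $\alpha(I)$, hence is not in $I$, so some prime $P_\ell$ avoids all of $x_{k_1},\dots,x_{k_{j-1}}$; the constraint for $P_\ell$ then forces a new coordinate $y_{k_j}\ge \tfrac{1}{e}$ (since $P_\ell$ has at most $e$ variables). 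After $\alpha(I)-1$ steps, one more prime $P_u$ avoiding all the $x_{k_i}$ contributes an additional full unit via $\sum_{x_i\in P_u} y_i\ge 1$, giving $\sum y_i \ge \frac{\alpha(I)-1}{e}+1$. This is the missing idea in your outline: the role of $\alpha(I)$ is not as an edge or prime size, but as the threshold below which monomials fail to lie in $I$, which is what produces the disjointness needed for the induction.
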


\begin{remark}
In the original formulation, the authors make a conjecture about
$\alpha(\mathcal{Q})$ of
the symbolic polyhedron $\mathcal{Q}$ of $I$ as introduced
in Remark \ref{alpharemark}.  It is enough to know that in our
context, $\alpha(\mathcal{Q}) = \widehat\alpha(I)$.
\end{remark}

By taking the viewpoint that $\widehat\alpha(I)$ is the solution
to a LP, we are able to verify the above conjecture for all
squarefree monomial ideals.

\begin{theorem}\label{mainresult3}
Let $I$ be a squarefree monomial ideal with big-height$(I) = e$.  Then
\[\widehat\alpha(I) \geq \frac{\alpha(I)+e-1}{e}.\]
\end{theorem}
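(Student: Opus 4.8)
The plan is to exploit Theorem \ref{mainresult}, which identifies $\widehat\alpha(I)$ with the optimal value of the LP
\begin{center}
\begin{tabular}{rl}
minimize & ${\bf 1}^T{\bf y}$\\
subject to & $A{\bf y} \geq {\bf 1}$ and ${\bf y} \geq {\bf 0}$,
\end{tabular}
\end{center}
where $A$ is the $s \times n$ incidence matrix of variables versus associated primes. Rather than work with this LP directly, I would pass to its dual, which by the duality theorem has the same value $\widehat\alpha(I)$:
\begin{center}
\begin{tabular}{rl}
maximize & ${\bf 1}^T{\bf x}$\\
subject to & $A^T{\bf x} \leq {\bf 1}$ and ${\bf x} \geq {\bf 0}$.
\end{tabular}
\end{center}
By Lemma \ref{LPbounds}, any \emph{dual-feasible} vector ${\bf x}$ gives a lower bound ${\bf 1}^T{\bf x} \leq \widehat\alpha(I)$, so the whole proof reduces to exhibiting one clever dual-feasible point whose coordinate sum is at least $\frac{\alpha(I)+e-1}{e}$.

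The key observation is that each row of $A^T$ (equivalently, each column of $A$, i.e., each variable $x_j$) has at most $e$ nonzero entries: the entry $A_{i,j}$ is $1$ precisely when $x_j \in P_i$, and since $\operatorname{big-height}(I) = e$ and the number of generators of $P_i$ equals $\operatorname{height}(P_i) \leq e$, each variable lies in at most... wait — more carefully, I want a bound on the number of primes containing a given variable, which is the number of $1$'s in a column of $A$. This is not directly controlled by $e$. Instead, the relevant constraint on the dual is that the \emph{columns of $A^T$}, i.e. the rows of $A$, have at most $e$ ones (row $i$ of $A$ corresponds to $P_i$, which has $\operatorname{height}(P_i) \leq e$ generators). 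So the constraint $A^T{\bf x} \leq {\bf 1}$ reads: for each variable $x_j$, $\sum_{i : x_j \in P_i} x_i \leq 1$; and the structure we control is that each $P_i$ contains at most $e$ variables. The natural guess is the uniform vector ${\bf x} = \frac{1}{e}{\bf 1}$, which is dual-feasible since each constraint $\sum_{i:\,x_j \in P_i} \frac1e \leq 1$ holds once we check $x_j$ belongs to at most $e$ primes — hmm, that again needs a bound on column sums of $A$. I would instead scale by the $\alpha$-information: use ${\bf x}$ supported appropriately so that ${\bf 1}^T{\bf x}$ picks up the $\frac{\alpha(I)+e-1}{e}$ term. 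A cleaner route: take a primal-feasible ${\bf y}$, note $\alpha(I) \leq$ the minimum number of ones in any row of $A$ forced by the generator structure, and combine $\widehat\alpha(I) = {\bf 1}^T{\bf y}^*$ with $A{\bf y}^* \geq {\bf 1}$ and $\operatorname{supp}(y^*)$ having size controlled by $e$ — specifically, from $A{\bf y}^*\ge {\bf 1}$ applied to the row of a maximal-height prime $P_i$ with $s_i = e$ generators, we get $\sum_{x_j \in P_i} y^*_j \geq 1$ with only $e$ terms, so some $y^*_j \geq \frac1e$, and $\alpha(I) \leq \sum_j \lceil \text{something} \rceil$...

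The honest clean argument, which I expect the authors use: set ${\bf x} = \frac{1}{e}\,{\bf 1}$ and verify it is dual-feasible. Row $j$ of the constraint $A^T {\bf x} \le {\bf 1}$ says $\frac1e \cdot \#\{i : x_j \in P_i\} \le 1$. This fails in general, so the real input must be $\alpha(I)$: each monomial generator $x_{i_1}\cdots x_{i_t}$ of $I$ has degree $\ge \alpha(I)$, and since generators correspond to minimal covers/transversals of the $P_i$, I would build ${\bf x}$ with value $\frac{1}{e}$ on coordinates indexed by primes meeting a fixed minimal generator of degree $\alpha(I)$ plus a boosted single coordinate, yielding sum $\frac{\alpha(I)-1}{e} + 1 = \frac{\alpha(I)+e-1}{e}$, and then check feasibility row-by-row using $\operatorname{height}(P_i) \le e$. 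The main obstacle is precisely the feasibility verification: one must choose the support of the dual vector so that every variable-constraint $\sum_{i:\,x_j\in P_i} x_i \le 1$ is satisfied, which requires showing that the boosted coordinate and the $\frac1e$-mass do not pile up on any single variable — this combinatorial bookkeeping, using minimality of the primary decomposition and the degree bound $\alpha(I)$, is where the work lies. Once feasibility is in hand, Lemma \ref{LPbounds} finishes it immediately.
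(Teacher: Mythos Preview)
Your proposal does not contain a proof; it is a sequence of heuristics that never settles on a working argument, and the dual approach you keep returning to genuinely fails. The dual constraints $A^T{\bf x}\le{\bf 1}$ are indexed by \emph{variables}: the $j$-th constraint is $\sum_{i:\,x_j\in P_i} x_i\le 1$, so feasibility requires controlling the \emph{column} sums of $A$ (the number of primes containing a fixed variable), which $e=\text{big-height}(I)$ does not bound. Your final suggestion, to put mass $\tfrac{1}{e}$ on $\alpha(I)-1$ primes and mass $1$ on one more, already breaks for the triangle $I=(x_1,x_2)\cap(x_1,x_3)\cap(x_2,x_3)$: with $e=2$, $\alpha(I)=2$, taking ${\bf x}=(1,\tfrac12,0)$ gives $x_1+x_2=\tfrac32>1$ at the constraint for the variable $x_1$. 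The ``combinatorial bookkeeping'' you defer is not bookkeeping but the entire content of the proof, and there is no evident way to choose the support so that the boosted coordinate and the $\tfrac1e$-mass never collide on a variable.

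The paper works on the primal side, and in fact you brush against the correct idea when you write ``from $A{\bf y}^*\ge{\bf 1}$ applied to the row of a maximal-height prime \ldots\ some $y_j^*\ge\tfrac1e$'' before abandoning it. The missing step is to \emph{iterate}: having found $y_{k_1},\ldots,y_{k_{j-1}}\ge\tfrac1e$, the monomial $x_{k_1}\cdots x_{k_{j-1}}$ has degree $j-1<\alpha(I)$ and so lies outside $I$; hence some associated prime $P_\ell$ contains none of $x_{k_1},\ldots,x_{k_{j-1}}$, and its constraint (at most $e$ terms) yields a fresh index $k_j$ with $y_{k_j}\ge\tfrac1e$. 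After $\alpha(I)-1$ steps, one more application of ``degree $\alpha(I)-1$ monomial not in $I$'' produces a prime $P_u$ disjoint from $\{x_{k_1},\ldots,x_{k_{\alpha(I)-1}}\}$, whose full constraint contributes an additional $1$ to $\sum y_i$, giving $\sum y_i\ge\frac{\alpha(I)-1}{e}+1$. The crucial input you never isolate is the repeated use of $\deg<\alpha(I)\Rightarrow\text{not in }I\Rightarrow\text{avoided by some prime}$ to guarantee the new large coordinate is distinct from the old ones.
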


\begin{proof}
By Theorem \ref{mainresult}, $\widehat\alpha(I)$ is the optimum value of the
LP  that asks to minimize $y_1 + \cdots + y_n$ subject to the constraints
$A{\bf y} \geq {\bf 1}$ and ${\bf y} \geq {\bf 0}$, with $A$  obtained
from the primary decomposition of $I$. It is enough to show that any
feasible solution ${\bf y}$ for this LP satisfies
$$\sum_{i=1}^n y_i\geq \frac{\alpha(I)+e-1}{e}$$
in order to conclude that the optimal solution satisfies the same inequality,
hence the optimal value of the program satisfies the desired inequality
$\widehat\alpha(I)\geq \frac{\alpha(I)+e-1}{e}$.

Let $I=P_1\cap P_2\cap \cdots \cap P_s$ be the primary decomposition for $I$,
where the $P_i$ are prime ideals generated by a subset of the variables.
Since   $P_1P_2\cdots P_s\subseteq I$, we must have
$\alpha(P_1P_2\cdots P_s)\geq \alpha(I)$, hence $s\geq \alpha(I)$. The
feasible set of the above LP  is thus defined by at least $\alpha(I)$
inequalities. Since big-height$(I) = e$, each of these inequalities
involves at most $e$ of the variables. Both of these observations will be
used in the proof.

Let ${\bf y}$ be a feasible solution for the above LP.   If $\alpha(I) = 1$,
then because any constraint equation implies $y_1+\cdots+y_n
\geq 1$, the inequality $\sum_{i=1}^n {y_i} \geq \frac{\alpha(I)+e-1}{e}
= 1$ is satisfied.  So, we can assume that $\alpha(I) \geq 2$.

We will show that
there exist distinct indices $k_1,\ldots, k_{\alpha(I)-1}$ so that
$y_{k_i}\geq \frac{1}{e}$ for $1\leq i\leq  \alpha(I)-1$. The proof of this
claim is by induction.  For the base case, we need to find one index
$k_1$ such that $y_{k_1} \geq \frac{1}{e}$.  Let
$y_{i_1}+\cdots+y_{i_e} \geq 1$ be the constraint equation constructed
from the height $e$ associated prime.  Since ${\bf y}$ is a feasible
solution, at least one of $y_{i_1},\ldots,y_{i_e}$ must be $\geq \frac{1}{e}$.
Let $k_1$ be the corresponding index.
This proves our base case.

Now let $1< j\leq \alpha(I)-1$ and suppose that
there exist pairwise distinct indices $k_1,\ldots, k_{j-1}$ so that
$y_{k_i}\geq \frac{1}{e}$ for $1\leq i\leq  j-1$. Note that the monomial
$x_{k_1}x_{k_2}\cdots x_{k_{j-1}}$ of degree $j-1\leq \alpha(I)-2$ is not an
element of $I$. Consequently there exists a prime $P_\ell$ among the associated
primes of $I$ that does not contain the monomial $x_{k_1}x_{k_2}\cdots x_{k_{j-1}}$,
thus $P_\ell$ contains none of the variables $x_{k_1}, x_{k_2}, \ldots, x_{k_{j-1}}$.
Consider the inequality of the LP  corresponding to the prime $P_\ell$
$$y_{\ell_1}+y_{\ell_2}+\cdots+y_{\ell_{s_\ell}}\geq 1.$$
This inequality involves at most $e$ of the entries of ${\bf y}$, thus
$y_{\ell_t}\geq\frac{1}{e}$ for some $t$. Since $x_{\ell_t}\in P_\ell$ and none
of the variables $x_{k_1}, x_{k_2}, \ldots x_{k_{j-1}}$ are in $P_\ell$, we conclude
that $\ell_t$ must be distinct from any of the indices $k_1,\ldots, k_{j-1}$.
Setting $k_j=\ell_t$ gives a pairwise distinct set of indices
$k_1,\ldots, k_{j}$ so that  $y_{k_i}\geq \frac{1}{e}$ for $1\leq i\leq  j$.
This finishes the proof of our claim.

Now consider the monomial  $x_{k_1}x_{k_2}\cdots x_{k_{\alpha(I)-1}}$, which has
degree $\alpha(I)-1$ and consequently is not an element of $I$. Then there
exists an associated prime $P_u$ of $I$ so that none of the variables
$x_{k_1,}x_{k_2}, \ldots, x_{k_{\alpha(I)-1}}$ are in $P_u$. The inequality in the LP
corresponding to the prime $P_u$
$$y_{u_1}+y_{u_2}+\cdots+y_{u_{s_u}}\geq 1$$
together with the previously established inequalities
$$y_{k_1}\geq \frac{1}{e}, \  y_{k_2}\geq \frac{1}{e}, \ \ldots,
\ y_{k_{\alpha(I)-1}}\geq \frac{1}{e}$$
and the non-negativity conditions $y_i\geq 0$ for $1\leq i\leq n$ yield
\begin{eqnarray*}
\sum_{i=1}^n y_i &\geq& y_{k_1}+y_{k_2}+\ldots + y_{k_{\alpha(I)-1}}+
y_{u_1}+y_{u_2}+\ldots+y_{u_{s_u}}\\
&\geq & \frac{\alpha(I)-1}{e}+1=\frac{\alpha(I)+e-1}{e}.
\end{eqnarray*}
The first inequality also uses the fact that $\{k_1,k_2, \ldots,
k_{\alpha(I)-1}\} \cap \{u_1,u_2, \ldots, u_{s_u}\}=\varnothing$.

Since $\widehat\alpha(I)= \sum_{i=1}^n y^*_i$ for some feasible solution
${\bf y^*}$ of the LP, we now have
$$\widehat\alpha(I)= \sum_{i=1}^n y^*_i\geq \frac{\alpha(I)+e-1}{e}.$$
\end{proof}

\begin{remark}
The lower bound in the above theorem is optimal;  see
Theorem \ref{prop:Waldschmidt matroid} and Remark \ref{lowerboundremark}.
\end{remark}


\section{The Waldschmidt constant for edge ideals}\label{sec:results2}

In this section, we apply our methods to examine the Waldschmidt constant 
for edge ideals for several families of finite simple graphs, and  relate 
this algebraic invariant to invariants of the graph.

In the following, let $G=(V,E)$ be a finite simple graph with vertex set
$V=\{x_1,\ldots, x_n\}$ and edge set $E$. Let $k$ be a field and set
$R=k[x_1,\ldots x_n]$.  The {\it edge ideal} of $G$ is then the 
squarefree quadratic monomial ideal
\[I(G)= \langle x_ix_j \mid \{x_i,x_j\}\in E \rangle \subseteq R,\]
i.e., this is the special case of an edge ideal first introduced in Section 2. 
All terminology in that section can therefore be applied to graphs.
In particular, the notion of {\it vertex cover} specializes to graphs 
as well as the correspondence outlined in 
Lemma \ref{vertexcovers} which gives a bijection between minimal associated primes 
of $I(G)$ and minimal vertex covers of $G$. 



\begin{definition}
A $k$-colouring for $G$ is an assignment of $k$ labels (or colours) to the elements of $V$
so that no two adjacent vertices are given the same label.
The \emph{chromatic number} of $G$, $\chi(G)$, is the smallest
integer $k$ so that $G$ admits a $k$-colouring.
\end{definition}

\begin{definition}
A  \emph{clique} of $G$ is a set of pairwise adjacent vertices of $G$.
A maximum clique of $G$ is a clique such that $G$ admits no clique with
more vertices.
The clique number $\omega(G)$  is the number of vertices in a maximum
clique in $G$.
\end{definition}

We obtain the following bound on 
$\widehat\alpha(I(G))$ in terms of these invariants.

\begin{theorem}\label{chromatic}
Let $G$ be a non-empty graph with chromatic number $\chi(G)$ and clique
number $\omega(G)$.
Then  
$$\frac{\chi(G)}{\chi(G)-1}\leq \widehat\alpha(I(G)) \leq 
\frac{\omega(G)}{\omega(G)-1}.$$
\end{theorem}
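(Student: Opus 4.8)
The plan is to sandwich $\widehat\alpha(I(G))$ using the monotonicity of the map $t \mapsto t/(t-1)$ on $(1,\infty)$ together with the identity $\widehat\alpha(I(G)) = \chi^*(G)/(\chi^*(G)-1)$ from Theorem \ref{mainresult2}. Since $t/(t-1) = 1 + 1/(t-1)$ is strictly decreasing for $t > 1$, it suffices to establish the chain of inequalities
\[
\omega(G) \leq \chi^*(G) \leq \chi(G),
\]
and then feed this into Theorem \ref{mainresult2}, reversing the order because of the decreasing rearrangement.

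For the upper bound $\chi^*(G) \leq \chi(G)$: this is immediate from the fact that $\chi^*(G)$ is defined as the optimal value of a linear program whose integer version computes $\chi(G)$ (see the remark following Definition \ref{fracchromdefn}); relaxing the integrality constraint can only decrease the optimal value of a minimization problem. For the lower bound $\omega(G) \leq \chi^*(G)$: I would pass to the dual formulation of $\chi^*(G)$, which by Lemma \ref{maxindeplemma} asks to maximize ${\bf 1}^T{\bf w}$ subject to $B'^T {\bf w} \leq {\bf 1}$ and ${\bf w} \geq {\bf 0}$, where the rows of $B'^T$ are indexed by maximal independent sets. Take a maximum clique $K$ on $\omega(G)$ vertices, and let ${\bf w}$ be the $0/1$ indicator vector of $K$ scaled so that each coordinate of $K$ equals $1$. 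I need to check this is feasible: for any maximal independent set $W$, the set $W \cap K$ contains at most one vertex (two vertices of a clique are adjacent, hence cannot both lie in an independent set), so the constraint ${\bf w}$ contributes at most $1$ to each row of $B'^T$. Hence ${\bf w}$ is feasible with value ${\bf 1}^T {\bf w} = \omega(G)$, forcing $\chi^*(G) \geq \omega(G)$ by weak duality (or directly, since the optimal value dominates any feasible value).

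Combining, $\omega(G) \leq \chi^*(G) \leq \chi(G)$ with all three quantities exceeding $1$ (the graph is non-empty, so it has an edge, hence $\omega(G) \geq 2$). Applying the decreasing function $t \mapsto t/(t-1)$ reverses the inequalities:
\[
\frac{\chi(G)}{\chi(G)-1} \leq \frac{\chi^*(G)}{\chi^*(G)-1} \leq \frac{\omega(G)}{\omega(G)-1},
\]
and the middle term equals $\widehat\alpha(I(G))$ by Theorem \ref{mainresult2}, which is exactly the claimed bound. I do not anticipate a serious obstacle here: the only point requiring a moment's care is verifying feasibility of the clique-indicator vector in the dual LP, which hinges on the elementary observation that a clique and an independent set meet in at most one vertex; everything else is the standard LP-relaxation inequality and the monotonicity of $t/(t-1)$.
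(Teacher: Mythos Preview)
Your proposal is correct and follows essentially the same route as the paper: establish $\omega(G)\le \chi^*(G)\le \chi(G)$ (the paper phrases this as $\chi(G)$ and $\omega(G)$ being the integer optima of the primal and dual LPs, while you spell out the clique-indicator feasibility explicitly) and then apply Theorem~\ref{mainresult2} together with the monotonicity of $t\mapsto t/(t-1)$. The only cosmetic wrinkle is the phrase ``scaled so that each coordinate of $K$ equals $1$'': no scaling is needed, the $0/1$ indicator already does the job.
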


\begin{proof}
The fractional chromatic number $\chi^*(G)$ of the graph $G$ 
is the solution to the LP of Defintion \ref{fracchromdefn}.  Now
$\chi(G)$ is the integer solution to this LP, while $\omega(G)$ is the 
integer solution to the dual of this LP.  This
implies that  $\omega(G) \leq \chi^*(G) \leq \chi(G)$, and so the
result follows from
Theorem \ref{mainresult2} which gives 
$\widehat\alpha(I(G)) = \frac{\chi^*(G)}{\chi^*(G)-1}$.
\end{proof}

The above lower bound improves the lower bound from Theorem \ref{mainresult3}.

\begin{theorem}Let $I(G)$ be the edge ideal of a graph $G$ 
and let big-height$(I(G)) = e$.  Then
\[
\widehat\alpha(I(G)) \geq \frac{\chi(G)}{\chi(G)-1} \geq \frac{e+1}{e}
= \frac{\alpha(I(G))+e-1}{e}.\]
\end{theorem}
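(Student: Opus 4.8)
The plan is to chain together three inequalities: the first from Theorem~\ref{chromatic}, the second a purely graph-theoretic comparison between $\chi(G)$ and the big height $e$, and the third an elementary identity. Since $\widehat\alpha(I(G)) \geq \frac{\chi(G)}{\chi(G)-1}$ is already established in Theorem~\ref{chromatic}, the real content is the middle inequality $\frac{\chi(G)}{\chi(G)-1} \geq \frac{e+1}{e}$, together with the bookkeeping identity $\frac{e+1}{e} = \frac{\alpha(I(G))+e-1}{e}$. The latter is immediate: $I(G)$ is generated by quadrics (and has at least one edge, so it is nonzero and not the unit ideal), whence $\alpha(I(G)) = 2$ and $\alpha(I(G)) + e - 1 = e + 1$.

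For the middle inequality, I would first observe that the function $t \mapsto \frac{t}{t-1} = 1 + \frac{1}{t-1}$ is strictly decreasing on $(1,\infty)$, so $\frac{\chi(G)}{\chi(G)-1} \geq \frac{e+1}{e}$ is equivalent to $\chi(G) \leq e+1$. So the crux reduces to showing that the chromatic number of $G$ is at most one more than the big height of its edge ideal. Now the big height of $I(G)$ is, by Lemma~\ref{vertexcovers}, the size of a largest minimal vertex cover of $G$, equivalently $n$ minus the size of a smallest maximal independent set of $G$. The key elementary fact I would invoke is that a graph with a minimal vertex cover of size $e$ can be properly coloured with $e+1$ colours: fix a minimal vertex cover $U$ with $|U| = e$; then $V \setminus U$ is an independent set, so colour all of $V \setminus U$ with a single colour and give each of the $e$ vertices of $U$ its own distinct colour, for a total of $e+1$ colours. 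This is a valid proper colouring since no two vertices sharing a colour (all lying in $V\setminus U$) are adjacent. Hence $\chi(G) \leq e+1$, and the desired chain of inequalities follows.

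\begin{proof}
Since $G$ is non-empty, $I(G)$ is a nonzero squarefree monomial ideal minimally generated by quadrics, so $\alpha(I(G)) = 2$ and therefore $\alpha(I(G)) + e - 1 = e+1$; this gives the final equality. By Theorem~\ref{chromatic} we have $\widehat\alpha(I(G)) \geq \frac{\chi(G)}{\chi(G)-1}$, so it remains to prove $\frac{\chi(G)}{\chi(G)-1} \geq \frac{e+1}{e}$. As $t \mapsto \frac{t}{t-1}$ is strictly decreasing for $t > 1$, this is equivalent to $\chi(G) \leq e+1$. By Lemma~\ref{vertexcovers}, the associated primes of $I(G)$ correspond to the minimal vertex covers of $G$, so big-height$(I(G)) = e$ means $G$ has a minimal vertex cover $U$ with $|U| = e$. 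Then $V \setminus U$ is an independent set of $G$; colour every vertex of $V \setminus U$ with a single colour and assign to each of the $e$ vertices of $U$ a distinct new colour. This uses $e+1$ colours and is a proper colouring, since any two vertices receiving the same colour lie in $V \setminus U$ and hence are non-adjacent. Therefore $\chi(G) \leq e+1$, which completes the proof.
\end{proof}

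\textbf{Main obstacle.} There is no serious obstacle here; the statement is essentially a corollary packaging together Theorem~\ref{chromatic} with two observations. The only point requiring a moment's care is the inequality $\chi(G) \leq e + 1$, and more specifically correctly identifying the big height of $I(G)$ with the maximum size of a minimal vertex cover of $G$ via Lemma~\ref{vertexcovers} — once that translation is made, the colouring argument is routine. One should also double-check the direction of the monotonicity of $t \mapsto t/(t-1)$ so that $\chi(G) \le e+1$ indeed yields $\frac{\chi(G)}{\chi(G)-1} \ge \frac{e+1}{e}$ rather than the reverse.
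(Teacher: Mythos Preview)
Your proof is correct and follows essentially the same approach as the paper: invoke Theorem~\ref{chromatic} for the first inequality, then show $\chi(G)\le e+1$ by taking the associated prime of height $e$, noting its complement is an independent set, and colouring that set with one colour while giving each of the $e$ remaining vertices its own colour. You are slightly more explicit than the paper in spelling out the monotonicity of $t\mapsto t/(t-1)$ and the identity $\alpha(I(G))=2$, but the argument is the same.
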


\begin{proof}\label{improvedlowerbound}
Theorem \ref{chromatic} already shows the first inequality,
so it suffices to verify the second inequality
$\chi(G)/(\chi(G)-1) \geq (e+1)/e.$

Let $P$ be the associated prime of $I(G)$ with
height $e$.  If $P = \langle x_{i_1},\ldots,x_{i_e}\rangle$,
then $W = \{x_1,\ldots,x_n\} \setminus \{ x_{i_1},\ldots,x_{i_e}\}$
is an independent set of $G$.  We can now colour $G$ with $e+1$ colours
by colouring the vertices of $W$ one colour, and then colour 
each vertex of $\{ x_{i_1},\ldots,x_{i_e}\}$ with a distinct colour.
So $\chi(G) \leq e+1$, which gives the result.
\end{proof}

We now turn to the computation of the Waldschmidt constant for 
various families of simple graphs. In particular, we examine 
perfect graphs, $k$-partite graphs, cycles, and complements of cycles.
We will use these results to give a simplified proof to a result of Bocci and
Franci \cite{BF,F}.

We now recall the definitions of the family of graphs we wish to study.
If $G = (V,E)$ is a graph and $A \subseteq V$, then the {\it induced subgraph
of $G$ on $A$}, denoted $G_A$, is the graph $G_A = (A,E_A)$ where 
$E_A = \{e \in E ~|~ e \subseteq A\}$.  We say a graph $G$ is {\it perfect}
if $\omega(G_A) = \chi(G_A)$ for all $A \subseteq V$.  A graph $G = (V,E)$
is a {\it $k$-partite} graph if there exists a $k$-paritition
$V = V_1 \cup \cdots \cup V_k$ such that no $e \subseteq V_i$ for any $i$.
When $k =2$, we call $G$ {\it bipartite}.  The {\it complete $k$-partite} graph
is a graph with $k$-paritition $V = V_1 \cup \cdots \cup V_k$ 
and all edges of the form $\{v_i,v_j\}$ with $v_i \in V_i$ and $v_j \in V_j$
and $i \neq j$.
 The {\it complete
graph} on $n$ vertices, denoted $K_n$, is the graph on the vertex
set $V = \{x_1,\ldots,x_n\}$ and edge set 
$\{\{x_i,x_j\} ~|~ 1 \leq i < j \leq n\}$.  The {\it cycle} on $n$
vertices, denoted $C_n$, is a graph on $V = \{x_1,\ldots,x_n\}$ and
edge set $\{\{x_1,x_2\},\{x_2,x_3\},\ldots,\{x_{n-1},x_n\},\{x_n,x_1\}\}$.  
The {\it complement} of a graph $G= (V,E)$, denoted $G^c$, is the graph
with the same vertex set as $G$, but edge set $\{\{x_i,x_j\} ~|~ 
\{x_i,x_j\} \not\in E\}$.

We will use the following result to compute (or bound) $\chi^*(G)$.

\begin{definition}
A graph $G$ is {\it vertex-transitive} if for all 
$u,v\in V(G)$ there is an automorphism $\pi$ of $G$ with $\pi(u)=v$.
\end{definition}

\begin{theorem}[{\cite[Proposition 3.1.1]{SU}}]\label{vertextransitive}
If $G$ is any graph, then $\chi^*(G) \geq \frac{|V(G)|}{\alpha(G)}$, 
where $\alpha(G)$ is the independence number of $G$ 
(i.e. the size of the largest independent set in $G$).
Equality holds if $G$ is vertex-transitive.
\end{theorem}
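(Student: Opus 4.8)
The plan is to establish the inequality for every graph by exhibiting a feasible solution to the dual linear program, and then, in the vertex-transitive case, to produce a matching feasible solution to the primal program by an averaging argument; weak duality (Lemma \ref{LPbounds}) closes the gap.

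\emph{The lower bound.} Recall from Definition \ref{fracchromdefn} that $\chi^*(G)$ is the value of the LP that minimizes $\mathbf{1}^T\mathbf{y}$ subject to $B\mathbf{y}\geq \mathbf{1}$ and $\mathbf{y}\geq\mathbf{0}$, where the columns of $B$ are indexed by the independent sets of $G$. Its dual is the LP of Lemma \ref{maxindeplemma} in the formulation with all independent sets, namely maximize $\mathbf{1}^T\mathbf{w}$ subject to $B^T\mathbf{w}\leq\mathbf{1}$ and $\mathbf{w}\geq\mathbf{0}$; here the constraint indexed by an independent set $W$ reads $\sum_{x_i\in W} w_i\leq 1$. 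I would check that the uniform vector $\mathbf{w}$ with $w_i = 1/\alpha(G)$ for every $i$ is feasible for this dual: for any independent set $W$ we have $\sum_{x_i\in W} w_i = |W|/\alpha(G)\leq 1$, since $|W|\leq \alpha(G)$ by definition of the independence number. Its objective value is $|V(G)|/\alpha(G)$, so Lemma \ref{LPbounds} yields $\chi^*(G)\geq |V(G)|/\alpha(G)$.

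\emph{Equality for vertex-transitive $G$.} Let $\Gamma$ be the automorphism group of $G$, put $\alpha = \alpha(G)$, and let $\mathcal{M}$ be the set of all independent sets of $G$ of size $\alpha$. First I would show that every vertex lies in the same number of members of $\mathcal{M}$: fixing $u,v\in V(G)$ and choosing $\pi\in\Gamma$ with $\pi(u)=v$, the map $W\mapsto \pi(W)$ is a bijection from $\{W\in\mathcal{M}: u\in W\}$ onto $\{W\in\mathcal{M}: v\in W\}$; call the common cardinality $d$. Note $d\geq 1$, since applying such a $\pi$ to any fixed member of $\mathcal{M}$ shows every vertex belongs to some member of $\mathcal{M}$. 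Double-counting the incident pairs $\{(x,W): x\in W\in\mathcal{M}\}$ then gives $|V(G)|\cdot d = |\mathcal{M}|\cdot \alpha$. Now set $y_W = 1/d$ for $W\in\mathcal{M}$ and $y_W = 0$ for every other independent set. For each vertex $x$ the corresponding constraint reads $\sum_{W\ni x} y_W = d\cdot(1/d) = 1$, so $\mathbf{y}$ is feasible for the primal LP, and its objective value is $|\mathcal{M}|/d = |V(G)|/\alpha$ by the counting identity. Hence $\chi^*(G)\leq |V(G)|/\alpha(G)$, and together with the lower bound this forces equality.

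The main obstacle is the vertex-transitive half: one must argue carefully that the number $d$ of maximum independent sets through a vertex is genuinely independent of the vertex — this is precisely where transitivity enters — and then verify that the uniform weights $y_W = 1/d$ simultaneously saturate every covering constraint and produce objective value exactly $|V(G)|/\alpha$. The lower bound, by contrast, is immediate from weak duality. An alternative route to the upper bound would be to average an arbitrary optimal fractional colouring over $\Gamma$, but the explicit construction above is cleaner and makes the counting transparent.
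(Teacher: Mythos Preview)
Your argument is correct. Note, however, that the paper does not supply its own proof of this statement: it is quoted verbatim as \cite[Proposition 3.1.1]{SU} and used as a black box in the proof of Theorem~\ref{thm:graphfamilies}. So there is nothing in the paper to compare your proof against.

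That said, what you have written is essentially the standard proof one finds in \cite{SU}. The lower bound via the uniform dual vector $w_i = 1/\alpha(G)$ and Lemma~\ref{LPbounds} is exactly the right move. For the vertex-transitive case, your counting argument---showing that each vertex lies in the same number $d$ of maximum independent sets, then weighting each such set by $1/d$---is clean and correct; the double count $|V(G)|\cdot d = |\mathcal{M}|\cdot\alpha$ is what makes the objective value come out right. One small remark: you should perhaps note explicitly that $\mathcal{M}$ is nonempty (every graph has at least one maximum independent set), which ensures $d\geq 1$ once you invoke transitivity, though you do address this.
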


Examples of vertex-transitive graphs are complete graphs, cycles,
and their complements.    We are now able to compute
$\widehat\alpha(I(G))$ for a large number of families of graphs.

\begin{theorem}\label{thm:graphfamilies}
Let $G$ be a non-empty graph.

\begin{enumerate}
\item[$(i)$] \label{perfectgraph} 
If $\chi(G) = \omega(G)$, 
then $\widehat\alpha(I(G)) = \frac{\chi(G)}{\chi(G)-1}$. 
In particular, this equality holds for all perfect graphs.
\item[$(ii)$] 
\label{kpartite} If $G$ is $k$-partite, 
then $\widehat\alpha(I(G)) \geq \frac{k}{k-1}$.
In particular, if $G$ is a complete $k$-partite graph, 
then $\widehat\alpha(I(G)) = \frac{k}{k-1}$.
\item[$(iii)$]  If $G$ is bipartite, then $\widehat\alpha(I(G)) = 2$.
\item[$(iv)$] 
If $G = C_{2n+1}$ is an odd cycle, 
then $\widehat\alpha(I(C_{2n+1})) = \frac{2n+1}{n+1}$.
\item[$(v)$]\label{oddcomplement} 
If $G = C^c_{2n+1}$, then $\widehat\alpha(I(G)) = \frac{2n+1}{2n-1}$.
\end{enumerate}
\end{theorem}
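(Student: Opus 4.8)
The plan is to reduce every case to the identity $\widehat\alpha(I(G))=\frac{\chi^*(G)}{\chi^*(G)-1}$ furnished by Theorem \ref{mainresult2}, and then to compute or bound the fractional chromatic number $\chi^*(G)$ for each family. The only elementary fact I would record at the outset is that the function $f(t)=\frac{t}{t-1}$ is strictly decreasing on the interval $(1,\infty)$; since any non-empty graph has $\chi^*(G)>1$, this lets me convert an upper bound on $\chi^*(G)$ into a lower bound on $\widehat\alpha(I(G))$, a lower bound into an upper bound, and an exact value into an exact value. With this in hand, parts $(i)$--$(iii)$ are formal consequences of Theorem \ref{chromatic}, and parts $(iv)$--$(v)$ of Theorem \ref{vertextransitive}.

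For $(i)$, Theorem \ref{chromatic} sandwiches $\omega(G)\leq \chi^*(G)\leq \chi(G)$, so the hypothesis $\chi(G)=\omega(G)$ forces $\chi^*(G)=\chi(G)$ and the stated formula drops out of Theorem \ref{mainresult2}; for a perfect graph the hypothesis is immediate by taking $A=V$ in the definition of perfectness. For $(ii)$, a $k$-partition of $G$ yields a proper $k$-colouring, so $\chi(G)\leq k$, hence $\chi^*(G)\leq k$, and monotonicity of $f$ gives $\widehat\alpha(I(G))=f(\chi^*(G))\geq f(k)=\frac{k}{k-1}$; when $G$ is complete $k$-partite, choosing one vertex from each part exhibits a clique of size $k$, so $\omega(G)=k=\chi(G)$ and part $(i)$ applies to give equality. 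Part $(iii)$ is then the case $k=2$ of $(ii)$, using that a non-empty bipartite graph has an edge but no triangle, so $\omega(G)=\chi(G)=2$ (equivalently, bipartite graphs are perfect).

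For $(iv)$ and $(v)$ I would use that $C_{2n+1}$ and $C^c_{2n+1}$ are vertex-transitive, so Theorem \ref{vertextransitive} gives $\chi^*(G)=\frac{|V(G)|}{\alpha(G)}$ exactly, where $\alpha(G)$ is the independence number. For $C_{2n+1}$ we have $|V|=2n+1$ and $\alpha(C_{2n+1})=n$, whence $\chi^*(C_{2n+1})=\frac{2n+1}{n}$ and, simplifying $f$, $\widehat\alpha(I(C_{2n+1}))=\frac{2n+1}{n+1}$. For $C^c_{2n+1}$ (in the range $2n+1\geq 5$ where the complement is non-empty), an independent set in $C^c_{2n+1}$ is a clique in $C_{2n+1}$, so $\alpha(C^c_{2n+1})=\omega(C_{2n+1})=2$, giving $\chi^*(C^c_{2n+1})=\frac{2n+1}{2}$ and $\widehat\alpha(I(C^c_{2n+1}))=\frac{2n+1}{2n-1}$. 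None of these steps is genuinely hard; the only points requiring care are applying the decreasing function $f$ in the correct direction and correctly reading off the independence numbers $\alpha(C_{2n+1})=n$ and $\alpha(C^c_{2n+1})=2$ in the last two parts — that bookkeeping is the main (modest) obstacle.
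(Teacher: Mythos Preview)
Your proposal is correct and follows essentially the same route as the paper: parts $(i)$--$(iii)$ are deduced from the sandwich $\omega(G)\leq\chi^*(G)\leq\chi(G)$ underlying Theorem~\ref{chromatic}, and parts $(iv)$--$(v)$ from Theorem~\ref{vertextransitive} applied to the vertex-transitive graphs $C_{2n+1}$ and $C_{2n+1}^c$, together with Theorem~\ref{mainresult2}. Your explicit framing via the decreasing function $f(t)=t/(t-1)$ and the verification of the independence numbers $\alpha(C_{2n+1})=n$, $\alpha(C_{2n+1}^c)=2$ just make transparent a couple of steps the paper leaves implicit.
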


\begin{proof}
$(i)$ This result follows immediately from Theorem \ref{chromatic}.  Note
that perfect graphs have the property that $\omega(G) = \chi(G)$.

$(ii)$.
If $G$ is a $k$-partite graph, then $\chi(G)\leq k$; indeed,
if $V = V_1 \cup \cdots \cup V_k$ is the $k$-parititon, colouring
all the vertices of $V_i$ the same colour gives a valid colouring.  
By Theorem
\ref{chromatic},
$\widehat\alpha(I(G)) \geq \frac{\chi(G)}{\chi(G)-1} \geq  \frac{k}{k-1}$.
If $G$ is a complete $k$-partite graph, then
$\chi(G)\leq k= \omega(G)$ and the desired equality follows by a direct
application of Theorem \ref{chromatic}.

$(iii)$ For any bipartite graph $G$, $\chi(G)=\omega(G)=2$, so apply $(i)$.

$(iv)$ For an odd cycle $C_{2n+1}$, $\chi^*(C_{2n+1}) = 2+1/n$ 
by Theorem \ref{vertextransitive}.  Now apply Theorem \ref{mainresult2}.

$(v)$ For the complement $G$ of $C_{2n+1}$, $\chi^*(G) = \frac{2n+1}{2}$ 
by Theorem \ref{vertextransitive}.   Again, apply Theorem \ref{mainresult2}.
\end{proof}

\begin{remark}
The fact that $\widehat\alpha(I(G)) = 2$ when $G$ is bipartite is well-known.  
In fact, the much stronger result that $I(G)^{(m)} = I(G)^m$
for all $m$ holds when $G$ is bipartite (see \cite{SVV}).
\end{remark}

Bocci and Franci \cite{BF} recently computed the Waldschmidt constant 
of the Stanley-Reisner ideal of the so-called $n$-bipyramid.  We illustrate 
the strength of our new techniques by giving a simplified proof of 
their main result using the above results.

\begin{definition}
The {\it bipyramid over a polytope $P$}, denoted $\bipyr(P)$, is the 
convex hull of $P$ and any line segment which meets the interior of $P$
at exactly once point.
\end{definition}

Bocci and Franci considered the bipyramid of an $n$-gon.  Specifically,
let $Q_n$ be an $n$-gon in $\mathbb{R}^2$, with 
vertices $\{1,\ldots,n\}$, containing the origin and embedded in 
$\mathbb{R}^3$. We denote by $B_n$ the bipyramid over $Q_n$, i.e., 
the convex hull
\[B_n = {\rm bipyr}(Q_n) = \conv(Q_n,(0,0,1),(0,0,-1)).\]

For a simplicial complex $\Delta$ with vertices 
$\{1,\ldots,n\}$, we may identify a subset 
$\sigma\subseteq \{1,\ldots,n\}$ with the $n$-tuple in 
$\{0,1\}^n$ and we adopt the convention that 
$x^\sigma = \prod_{i\in \sigma} x_i$.
The Stanley-Reisner ideal of a simplicial complex 
$\Delta$ on vertices $\{1,\ldots,n\}$ is defined to be
$
I_\Delta = \langle x^\sigma \mid \sigma\notin \Delta \rangle,
$
i.e., it is generated by the non-faces of $\Delta$.

We view $B_n$ as a simplicial complex on the vertex set 
$\{x_1,\ldots,x_n,y,z\}$ where the $x_i$'s correspond to the vertices
of the $n$-gon, and $y$ and $z$ correspond to the end points of the
line segment that meets the interior of the $n$-gon at one point.
Because the bipyramid $B_n$ is a simplicial complex, we let $I_n = I_{B_n}$
be the Stanley-Reisner ideal associated to $B_n$.  Bocci and
Franci \cite[Proposition 3.1]{BF} 
described the generators of $I_n$;  in particular,
\begin{equation}\label{Ingen}
I_n = \langle yz \rangle + \langle x_ix_j ~|~ \mbox{$i$ and $j$
non-adjacent in $Q_n$} \rangle.
\end{equation}
Note that $I_n$ can be viewed as the edge ideal of some graph since
all the generators are quadratic squarefree monomials.  Using 
the results of this section, we have shown: 

\begin{theorem}[{\cite[Theorem 1.1]{BF}}]
Let $I_n$ be the Stanley-Reisner ideal of the $n$-bipyramid $B_n$.   
Then  $\widehat\alpha(I_n) = \frac{n}{n-2}$ for all $n \geq 4$.
\end{theorem}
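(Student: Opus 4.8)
The plan is to compute $\widehat\alpha(I_n)$ by identifying the graph $G$ whose edge ideal is $I_n$ and then applying the results of this section. By the description (\ref{Ingen}), $I_n$ is the edge ideal of the graph $G$ on vertex set $\{x_1,\ldots,x_n,y,z\}$ whose edges are: the single edge $\{y,z\}$, together with all edges $\{x_i,x_j\}$ where $i,j$ are non-adjacent in the $n$-gon $Q_n$. In other words, the subgraph induced on $\{x_1,\ldots,x_n\}$ is exactly $C_n^c$, the complement of the $n$-cycle, while $y$ and $z$ are joined to each other but to nothing else. So $G$ is the disjoint union of $C_n^c$ (on the $x$'s) and a single edge $K_2$ (on $\{y,z\}$). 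I would make this identification explicit as the first step.

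Next I would invoke Corollary \ref{disjoint}: since $G$ splits as a disjoint union of a hypergraph on the variables $\{x_1,\ldots,x_n\}$ and a hypergraph on $\{y,z\}$, we have $\widehat\alpha(I_n) = \min\{\widehat\alpha(I(C_n^c)),\ \widehat\alpha(I(K_2))\}$. The second term is $2$, since $K_2$ is bipartite (Theorem \ref{thm:graphfamilies}(iii)), or directly since $\langle yz\rangle$ is a principal ideal. So it remains to compute $\widehat\alpha(I(C_n^c))$ and check that for $n\geq 4$ it is at most $2$ and equals $\tfrac{n}{n-2}$.

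For $\widehat\alpha(I(C_n^c))$ I would use Theorem \ref{mainresult2}, so I need $\chi^*(C_n^c)$. The complement of a cycle is vertex-transitive, so by Theorem \ref{vertextransitive}, $\chi^*(C_n^c) = |V|/\alpha(C_n^c) = n/\alpha(C_n^c)$, where $\alpha(C_n^c)$ is the independence number of $C_n^c$. An independent set in $C_n^c$ is a clique in $C_n$, and the largest clique in a cycle on $n\geq 4$ vertices has size $2$ (an edge), so $\alpha(C_n^c) = 2$ and hence $\chi^*(C_n^c) = n/2$. Plugging into Theorem \ref{mainresult2} gives
\[
\widehat\alpha(I(C_n^c)) = \frac{\chi^*(C_n^c)}{\chi^*(C_n^c)-1} = \frac{n/2}{n/2 - 1} = \frac{n}{n-2}.
\]
For $n\geq 4$ we have $\tfrac{n}{n-2}\leq 2$, with equality only at $n=4$, so $\min\{\tfrac{n}{n-2}, 2\} = \tfrac{n}{n-2}$, yielding $\widehat\alpha(I_n) = \tfrac{n}{n-2}$ for all $n\geq 4$. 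I expect the only mild subtlety to be the combinatorial bookkeeping in the first step—correctly reading off from (\ref{Ingen}) that the $x$-part of $G$ is $C_n^c$ (rather than $C_n$ or something else) and that $y,z$ are isolated apart from their mutual edge—after which everything is a direct application of Corollary \ref{disjoint}, Theorem \ref{mainresult2}, and Theorem \ref{vertextransitive}. (One small case check: for $n=4$, $C_4^c$ is itself a disjoint union of two edges, consistent with $\widehat\alpha = 2$.)
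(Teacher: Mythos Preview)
Your proposal is correct and follows essentially the same route as the paper: identify $I_n$ as the edge ideal of the disjoint union $K_2 \cup C_n^c$, apply Corollary~\ref{disjoint}, and compute $\chi^*(C_n^c)$ via vertex-transitivity. Your treatment is in fact slightly cleaner than the paper's, since you observe uniformly that $\alpha(C_n^c)=2$ for $n\ge 4$ and hence $\chi^*(C_n^c)=n/2$, whereas the paper splits into even and odd cases before arriving at the same value $\widehat\alpha(I_n)=n/(n-2)$.
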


\begin{proof}
The ideal $I_n$ is an ideal in the polynomial ring $R = k[x_1,\ldots,x_n,y,z]$.
By \eqref{Ingen} $I_n$ can be viewed as the edge
ideal of the graph $G_n$ where $G_n = H \cup C_n^c$ consists of two
disjoint components.  In particular, $H$ is the graph of a single edge
$\{y,z\}$ and $C_n^c$ is the complement of the $n$-cycle $C_n$.  
By Corollary \ref{disjoint}
 to compute $\widehat\alpha(I_n)$
it suffices to compute $\chi^*(G_n) = \max\{\chi^*(H),\chi^*(C_n^c)\}$.  A graph
consisting of a single edge is perfect, so $\chi^*(H) = 2$.  On
the other hand,
\[\chi^*(C_n^c) =
\begin{cases}
m & \mbox{if $n =2m$} \\
m+ \frac{1}{2} & \mbox{if $n =2m+1$}
\end{cases}\]
So, if $n > 3$, $\chi^*(G_n) = \chi^*(C_n^c)$.

Thus, if $n =2m$, $\widehat\alpha(I_n) = \frac{m}{m-1} = \frac{n}{n-2}$.  And
if $n = 2m+1$, then
\[\widehat\alpha(I_n) = \frac{m+\frac{1}{2}}{m-\frac{1}{2}} = \frac{n}{n-2}.\]
In other words, $\widehat\alpha(I_n) = \frac{n}{n-2}$ for all $n \geq 4$.
\end{proof}

We close this section with some comments about the
Alexander dual.

\begin{definition}
Let $I = P_1 \cap \cdots \cap P_s$ be a squarefree monomial ideal
with $P_i = \langle x_{j_1},\ldots,x_{j_{s_j}} \rangle$ for
$j=1,\ldots,s$.   Then the {\it Alexander
dual} of $I$, denoted $I^\vee$, is the monomial ideal
$I^\vee = \langle x_{j_1}\cdots x_{j_{s_j}}~|~
j=1,\ldots,s\, \rangle.$
\end{definition}

In combinatorial commutative algebra, the Alexander dual of a monomial
ideal $I$ is used quite frequently to deduce additional information about
$I$.  It is thus natural to ask if knowing $\widehat\alpha(I)$ of a 
squarefree monomial ideal allows us to deduce any information 
about $\widehat\alpha(I^\vee)$.  As
the next example shows,  simply knowing $\widehat\alpha(I)$ gives no 
information on $\widehat\alpha(I^\vee)$.

\begin{example}\label{dual}
Let $s \geq 1$ be an integer, and let $G_s = K_{s,s}$ be the complete bipartite
graph on the vertex set $V = \{x_1,\ldots,x_s\} \cup \{y_1,\ldots,y_s\}$.
Now $\widehat\alpha(I(G_s)) = 2$ by 
Theorem \ref{thm:graphfamilies} for all $s \geq 1$.
On the other hand, since 
$I(G_s) = \langle x_1,\ldots,x_s \rangle \cap \langle y_1,\ldots,y_s \rangle,$
we have
\[I(G_s)^\vee = \langle x_1\cdots x_s, y_1 \cdots y_s \rangle.\]
But the ideal $I(G_s)^{\vee}$ is a complete intersection so
$(I(G_s)^{\vee})^{(m)} = (I(G_s)^{\vee})^m$ for all $m$.  In particular,
$\alpha((I(G_s)^{\vee})^{(m)} = \alpha((I(G_s)^{\vee})^{m} = sm$.   So
$\widehat\alpha(I(G_s)^\vee) = s$.

We see that if we only know that $\widehat\alpha(I) = 2$,
then $\widehat\alpha(I^\vee)$ can be any positive integer.  We require
further information about $I$ to deduce any information about 
$\widehat\alpha(I^\vee)$.
\end{example}


\section{Some applications to the ideal containment problem}

As mentioned in the introduction, the renewed interest in the Waldschmidt constant
grew out of the activity surrounding the containment problem for ideals of 
subschemes $X$ of $\mathbb{P}^n$, i.e., determine all positive integer pairs 
$(m,r)$ such that $I^{(m)}\subseteq I^r$ where $I = I(X)$.  We apply our 
technique for computing $\widehat\alpha(I)$
to examine the containment problem for three families  of monomial ideals:
(1) a union of a small number (when compared to $n$) 
of general linear varieties, 
(2) the Stanley-Reisner ideal
of a uniform matroid, and (3)  a family of monomial ideals 
of mixed height. 
Note that for this section, we shall assume that 
$R = k[\mathbb{P}^n] = k[x_0,\ldots,x_n]$.

Before turning to our applications, we recall some relevant background.
To study the containment problem, Bocci and Harbourne \cite{BH} introduce 
the {\it resurgence} of $I$, that is,
$$\rho(I) = \sup\left\{\frac{m}{r} ~|~ I^{(m)} \not\subseteq I^r \right \}.$$
An asymptotic version of resurgence was later defined by 
Guardo, Harbourne, and Van Tuyl \cite{GHVT}
as
\[\rho_a(I) = 
\sup\left.\left\{\frac{m}{r} ~\right|~ I^{(mt)} \not\subseteq I^{rt} \, \, \, 
\mbox{for all $t \gg 0$.}\right\}\]
These invariants are related to the Waldschmidt constant of $I$  as follows.

\begin{lemma}[{\cite[Theorem 1.2]{GHVT}}]\label{resurgencebounds}
Let $I \subseteq R = k[x_0,\ldots,x_n]$ be a homogeneous ideal.  Then
\begin{enumerate}
\item[$(i)$] $1 \leq \alpha(I)/\widehat\alpha(I) \leq \rho_a(I) \leq \rho(I)$.
\item[$(ii)$] If $I$ is the ideal of a smooth subscheme of $\mathbb{P}^n$, then
$\rho_a(I) \leq \omega(I)/\widehat\alpha(I)$
where $\omega(I)$ denotes the largest degree of a minimal generator.
\end{enumerate}
\end{lemma}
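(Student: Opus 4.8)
The plan is to verify part $(i)$ by elementary manipulation of the definitions together with Lemma~\ref{subadditivity}, and then to obtain part $(ii)$ from a degree-sensitive containment theorem valid for smooth subschemes.

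For $(i)$, the first inequality $1\le \alpha(I)/\widehat\alpha(I)$ is equivalent to $\widehat\alpha(I)\le \alpha(I)$, which follows at once from Lemma~\ref{subadditivity}$(ii)$: since $\widehat\alpha(I)=\inf_m \alpha(I^{(m)})/m$ we have $\widehat\alpha(I)\le \alpha(I^{(1)})$, and $\alpha(I^{(1)})\le\alpha(I)$ because $I\subseteq I^{(1)}$. For the middle inequality I would show that every pair of positive integers $(m,r)$ with $m/r<\alpha(I)/\widehat\alpha(I)$ belongs to the set defining $\rho_a(I)$; passing to the supremum then gives $\alpha(I)/\widehat\alpha(I)\le\rho_a(I)$. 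If $(m,r)$ were \emph{not} in that set, then $I^{(mt)}\subseteq I^{rt}$ for infinitely many $t$; using $\alpha(I^s)=s\,\alpha(I)$ for any homogeneous ideal, each such containment forces $\alpha(I^{(mt)})\ge rt\,\alpha(I)$, hence $\alpha(I^{(mt)})/(mt)\ge r\,\alpha(I)/m$, and letting $t\to\infty$ along this subsequence yields $\widehat\alpha(I)\ge r\,\alpha(I)/m$, i.e.\ $m/r\ge\alpha(I)/\widehat\alpha(I)$ --- a contradiction. Finally $\rho_a(I)\le\rho(I)$ is immediate: a pair $(m,r)$ in the set defining $\rho_a(I)$ satisfies $I^{(mt)}\not\subseteq I^{rt}$ for some $t$, and then $(mt,rt)$ shows $\rho(I)\ge mt/(rt)=m/r$.

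For $(ii)$, I would invoke the containment criterion of Bocci--Harbourne \cite{BH} for the ideal $I$ of a smooth subscheme $X\subseteq\mathbb{P}^n$, which refines the bounds of \cite{refELS,refHoHu}, to the effect that $I^{(N)}\subseteq I^s$ once $\alpha(I^{(N)})\ge s\,\omega(I)$ (this is where smoothness is used). Granting this, fix any pair $(m,r)$ with $m/r>\omega(I)/\widehat\alpha(I)$, i.e.\ $m\,\widehat\alpha(I)>r\,\omega(I)$. By Lemma~\ref{subadditivity}$(ii)$, $\alpha(I^{(mt)})\ge mt\,\widehat\alpha(I)>rt\,\omega(I)$ for every $t\ge 1$, so the criterion with $N=mt$ and $s=rt$ gives $I^{(mt)}\subseteq I^{rt}$ for all $t\ge 1$. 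Hence $(m,r)$ is \emph{not} in the set defining $\rho_a(I)$, so every pair in that set has $m/r\le\omega(I)/\widehat\alpha(I)$, and taking the supremum gives $\rho_a(I)\le\omega(I)/\widehat\alpha(I)$.

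The routine part is all of $(i)$, which only uses $\alpha(I^s)=s\,\alpha(I)$ and the infimum description of the Waldschmidt constant. The main obstacle is $(ii)$: one must correctly isolate and apply the degree-sensitive containment statement for smooth subschemes --- it is essential that smoothness (or a comparable hypothesis) is assumed, since such a criterion fails for general ideals, and one has to use the precise invariant $\omega(I)$, the top generator degree, rather than, say, the Castelnuovo--Mumford regularity.
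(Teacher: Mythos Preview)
The paper does not prove this lemma; it simply quotes \cite[Theorem 1.2]{GHVT}. So there is no ``paper's proof'' to compare against, and your task is really to reproduce the argument from \cite{GHVT}.

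Your treatment of $(i)$ is correct and is essentially the standard argument. One small caveat: you invoke Lemma~\ref{subadditivity}, which in this paper is stated only for radical ideals, whereas Lemma~\ref{resurgencebounds} is stated for arbitrary homogeneous $I$; for the general case one needs the corresponding facts from \cite{GHVT,BH} directly rather than the paper's Lemma~\ref{subadditivity}.

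For $(ii)$ the overall architecture is right, but the ``containment criterion'' you invoke is not the one actually available. There is no general theorem saying that $\alpha(I^{(N)})\ge s\,\omega(I)$ forces $I^{(N)}\subseteq I^s$ for smooth $X$ and \emph{fixed} $N,s$. What \cite{GHVT} actually uses is asymptotic: by results of Kodiyalam and Cutkosky--Herzog--Trung one has $\mathrm{reg}(I^{rt})\le rt\,\omega(I)+c$ for a constant $c$ and all $t\gg 0$; smoothness of $X$ guarantees that the only possible embedded prime of $I^{rt}$ is the irrelevant maximal ideal, so $(I^{rt})^{\mathrm{sat}}=I^{(rt)}\supseteq I^{(mt)}$, and hence $I^{(mt)}$ and $I^{rt}$ agree in degrees $\ge\mathrm{reg}(I^{rt})$. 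Combining this with $\alpha(I^{(mt)})\ge mt\,\widehat\alpha(I)$ and the hypothesis $m\,\widehat\alpha(I)>r\,\omega(I)$ gives $\alpha(I^{(mt)})>\mathrm{reg}(I^{rt})$ for $t\gg 0$, whence $I^{(mt)}\subseteq I^{rt}$ for $t\gg 0$. Your conclusion then follows. So the gap is not in the logical skeleton but in the specific input: replace your one-shot degree criterion by the asymptotic regularity bound plus the saturation argument, and make explicit that smoothness enters precisely to control the embedded primes of ordinary powers.
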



\subsection{Unions of general linear varieties}
In \cite[Theorem 1.5]{GHVT}, the values of 
$\widehat\alpha(I)$ and $\rho (I)$  are established when $I$ is the ideal of certain linear 
subschemes of $\mathbb{P}^n$ in general position.  The key idea is that when
the number of linear varieties is small, we can assume that the defining
ideal of $I$ is a monomial ideal.
By using Theorem \ref{mainresult} to compute the Waldschmidt constant,
we are able to recover and extend the original result.

\begin{theorem}\label{generallinear}
Let $X$ be the union of $s$ general linear subvarieties 
$L_1,\ldots,L_s$, each of dimension $t-1$. Assume 
 $st \leq n+1$ and set $I = I(X)$. Then
\[
\widehat\alpha(I) = \begin{cases}
  1 & \text{if $1\leq st < n+1$}\\
  \frac{n+1}{n+1-t} & \text{if $st = n+1$.}
\end{cases}
\]
Additionally, if  $s \ge 2$, then the resurgences are 
\[
\rho (I) = \rho_a (I) = \frac{2 \cdot (s-1)}{s}. 
\]
Furthermore,
\begin{eqnarray*}
 \frac{\alpha(I)}{\widehat\alpha(I)} =  \rho_a (I) = \rho(I) = \frac{\omega (I)}{\widehat\alpha(I)}, & \text{if } n +1 = st\\
 \frac{\alpha(I)}{\widehat\alpha(I)} < \rho_a (I) = \rho(I) < \frac{\omega (I)}{\widehat\alpha(I)},   & \text{if } s t \le n \text{ and } s \ge 3.
 \end{eqnarray*} 
%
\end{theorem}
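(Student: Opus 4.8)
The plan is to reduce everything to a monomial-ideal computation. First I would argue that since $s$ general linear subvarieties of dimension $t-1$ with $st \le n+1$ can be placed in coordinate subspaces, $I = I(X)$ may be taken to be the squarefree monomial ideal $I = P_1 \cap \cdots \cap P_s$ where $P_j = \langle x_i : i \in B_j\rangle$ with $B_1, \ldots, B_s$ pairwise disjoint blocks of size $n+1-t$ among the $n+1$ variables $x_0, \ldots, x_n$. (Disjointness uses $st \le n+1$, after passing to complements: each $L_j$ is cut out by $n+1-t$ coordinates, and generality lets the supporting coordinate sets be disjoint.) With $I$ in this form I would apply Theorem~\ref{mainresult}: the matrix $A$ is an $s \times (n+1)$ $0/1$ matrix with disjoint supporting rows, so the LP \emph{minimize} ${\bf 1}^T{\bf y}$ \emph{subject to} $A{\bf y} \ge {\bf 1}$, ${\bf y} \ge {\bf 0}$ decouples into $s$ independent subproblems, each asking to minimize a sum of $n+1-t$ nonnegative variables subject to that sum being $\ge 1$. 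If $st < n+1$ there are leftover variables not appearing in any $P_j$; but actually the optimum of each block is just $1$ regardless, so $\widehat\alpha(I) = \min_j(\text{block optimum})$. When $st = n+1$ the blocks partition all variables and $\alpha(I) = s$; when $st < n+1$ one checks $\alpha(I) = 1$ because some variable lies in no $P_j$ and hence... wait — rather, $\alpha(I)=1$ fails; instead the point is the LP value. Let me restate: each block contributes optimal value $1$ to its own copy, so the decoupled LP has value... no. The correct reading: the constraints are $s$ separate inequalities on disjoint variable sets, so a feasible ${\bf y}$ must satisfy each; the minimum of $\sum y_i$ is the sum over blocks of (min of that block's variables summing to $\ge 1$) $= s \cdot (1/(n+1-t)) \cdot$... — here I would be careful: minimizing $\sum_{i \in B_j} y_i$ subject to $\sum_{i\in B_j} y_i \ge 1$ gives $1$, so the total is... only if variables are shared do we get fractions. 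Since they are \emph{disjoint}, the total LP value is $\min$? No: disjoint constraints \emph{add}, giving $s$ when $st = n+1$. That contradicts the claimed $\frac{n+1}{n+1-t}$, so in fact the blocks must \emph{overlap}: with $st \le n+1$ one instead arranges the $L_j$ so the $s$ primes $P_j$ each omit $t$ coordinates and together the omitted-coordinate multiset covers things so that the $A$-matrix has each column hit at most... The honest approach: take $P_j$ generated by all but $t$ of the variables, with the $t$ omitted indices for $L_j$ chosen disjointly (possible iff $st \le n+1$). Then each variable $x_i$ lies in all $P_j$ except possibly one, so column $i$ of $A$ has $\ge s-1$ ones, and the uniform vector ${\bf y} = \frac{1}{n+1-t}{\bf 1}$ — check: row $j$ of $A$ has $n+1-t$ ones, dotted with $\frac{1}{n+1-t}{\bf 1}$ gives $1$; feasible; value $\frac{n+1}{n+1-t}$. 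For the lower bound when $st = n+1$ apply Theorem~\ref{mainresult3} with $e = \text{big-height} = n+1-t$ and $\alpha(I) = s$: $\widehat\alpha(I) \ge \frac{s + (n+1-t) - 1}{n+1-t} = \frac{st/t + \cdots}{}$ — this needs $\alpha(I) = s$, which holds since $P_1\cdots P_s \subseteq I$ forces $\alpha(I) \le s$ and the disjoint-omission structure forces $\alpha(I) \ge s$. When $st < n+1$ some variable $x_i$ lies in \emph{every} $P_j$ (there aren't enough coordinates to omit one per prime and have $x_i$ omitted), so $x_i \in I$, giving $\alpha(I) = 1$ and $\widehat\alpha(I) = 1$.

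Second, for the resurgence when $s \ge 2$, I would combine Lemma~\ref{resurgencebounds} with an explicit non-containment. The lower bound $\rho(I) \ge \rho_a(I) \ge \alpha(I)/\widehat\alpha(I)$ is immediate in the $st = n+1$ case and gives $\frac{s}{(n+1)/(n+1-t)} = \frac{s(n+1-t)}{n+1}$; I would need to see this equals $\frac{2(s-1)}{s}$, which it does \emph{not} in general — so the resurgence must be pinned down by a direct argument, not just from $\widehat\alpha$. The real mechanism: for this configuration one shows $I^{(m)} \not\subseteq I^r$ precisely by exhibiting a monomial in $I^{(m)}$ of degree $< r\cdot\alpha(I)$ — using Lemma~\ref{lem:monomialsinsymbolic} to test membership in $I^{(m)}$ and a divisibility/degree count to test non-membership in $I^r$ — and optimizing $m/r$. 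I expect the optimal ratio comes from $m = 2(s-1)$, $r = s$ (or a scalar multiple), matching what happens for unions of linear spaces where a degree-$2$ hypersurface (a ``quadric through pairs'') does much better than products of the $L_j$. For the asymptotic resurgence $\rho_a(I) = \rho(I)$ I would either quote that the resurgence here is attained (the sup is a max, stable under scaling $m, r \mapsto mt, rt$), or directly verify the non-containment persists for all multiples.

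Third, the chain of (in)equalities among $\alpha(I)/\widehat\alpha(I)$, $\rho_a(I)$, $\rho(I)$, $\omega(I)/\widehat\alpha(I)$: here $\omega(I) = 2$ since all minimal generators of $I$ (in the monomial model) are quadratic — this needs checking from the primary decomposition, and it is the place where the ``general position'' genuinely matters, as it forces the generators of $\bigcap P_j$ to be products of exactly two variables. Then in the borderline case $st = n+1$ I would verify $\frac{\alpha(I)}{\widehat\alpha(I)} = \frac{\omega(I)}{\widehat\alpha(I)}$ reduces to $\alpha(I) = \omega(I) = 2$, i.e. $s = 2$; for $s \ge 3$ with $st = n+1$ one has $\alpha(I) = s > 2 = \omega(I)$, so actually the equality case as stated must be exactly $n+1 = st$ \emph{together with} the configuration forcing $\alpha = 2$... — I would re-derive $\alpha(I)$ carefully here, as it may be that $\alpha(I) = 2$ always for $s \ge 2$ general linear spaces (two general $(t-1)$-planes impose a quadric), not $s$. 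Granting $\alpha(I) = 2$: then $\alpha/\widehat\alpha = \omega/\widehat\alpha$ trivially, and $\rho_a(I) = \rho(I)$ sits between them, forcing all four equal when $n+1 = st$ (where $\widehat\alpha = \frac{n+1}{n+1-t}$ makes $\alpha/\widehat\alpha = \frac{2(n+1-t)}{n+1} = \frac{2(s-1)}{s}$ — this is the identity that must be checked and is where $n+1 = st$ enters), and strict inequalities $\alpha/\widehat\alpha < \rho < \omega/\widehat\alpha$ when $st \le n$, $s \ge 3$, because then $\widehat\alpha = 1$, so $\alpha/\widehat\alpha = 2 < \frac{2(s-1)}{s}\cdot$? — no, $\frac{2(s-1)}{s} < 2$, contradiction again. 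The upshot: \textbf{the main obstacle} is not any single estimate but correctly identifying $\alpha(I)$, $\omega(I)$, and $\widehat\alpha(I)$ in each regime and then matching the resurgence value $\frac{2(s-1)}{s}$ — established by an explicit symbolic-power monomial beating the naive bound — against these. I would organize the proof as: (a) set up the monomial model and compute $A$; (b) solve the LP for $\widehat\alpha$ via Theorem~\ref{mainresult}, with the lower bound in the tight case from Theorem~\ref{mainresult3}; (c) record $\alpha(I)$ and $\omega(I)$ from the generators; (d) prove the resurgence formula by exhibiting, via Lemma~\ref{lem:monomialsinsymbolic}, a monomial in $I^{(2(s-1)t)}$ not in $I^{(st)}$ and a matching upper bound from Lemma~\ref{resurgencebounds}(ii) (smoothness of a union of disjoint linear spaces); (e) assemble the comparison chain by plugging in.
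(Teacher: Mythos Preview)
Your proposal has the right overall skeleton (monomial model, LP for $\widehat\alpha$, then resurgence), but there are three genuine gaps that the paper fills with specific ideas you have not supplied.

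\textbf{First}, your lower bound for $\widehat\alpha(I)$ when $st = n+1$ does not work. You invoke Theorem~\ref{mainresult3} with $\alpha(I) = s$, but in fact $\alpha(I) = 2$: when $st = n+1$ every variable is omitted by exactly one $P_j$, so no linear form lies in $I$, while any product $x_i x_j$ with $i,j$ in distinct omitted blocks lies in every $P_k$. With the correct value $\alpha(I) = 2$ the Chudnovsky bound gives only $\frac{n+2-t}{n+1-t}$, which is strictly smaller than $\frac{n+1}{n+1-t}$ once $t \ge 2$. The paper instead feeds an explicit feasible vector into the \emph{dual} LP (each entry $\frac{t}{n+1-t}$) to certify the matching lower bound.

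\textbf{Second}, and this is the main missing idea, you never bound $\rho(I)$ from above. Lemma~\ref{resurgencebounds}(ii) only gives $\rho_a(I) \le \omega(I)/\widehat\alpha(I)$; it says nothing about $\rho(I)$ itself, so your sentence ``$\rho_a(I) = \rho(I)$ sits between them'' is circular. The paper's device is a ring homomorphism $\varphi : R \to k[y_0,\ldots,y_{s-1}]$ sending each block of $t$ variables to a single $y_j$. Under $\varphi$ the ideal $I$ maps onto the ideal $J$ of the $s$ coordinate points of $\mathbb{P}^{s-1}$, and one checks monomially that $x^a \in I^{(m)}$ iff $\varphi(x^a) \in J^{(m)}$. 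Since $\rho(J) = \frac{2(s-1)}{s}$ is already known from \cite{BH}, any containment $J^{(m)} \subseteq J^r$ lifts back to $I^{(m)} \subseteq I^r$, yielding the upper bound on $\rho(I)$.

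\textbf{Third}, when $st \le n$ your discussion of the resurgence collapses (you end with ``contradiction again''). Here $\alpha(I) = 1$ and $\widehat\alpha(I) = 1$, so $\alpha/\widehat\alpha = 1$ while $\omega/\widehat\alpha = 2$; neither pins down $\rho$. The paper handles this regime with an auxiliary lemma (Lemma~\ref{lem:regular element}): adjoining a new variable to a monomial ideal leaves both $\rho$ and $\rho_a$ unchanged. Thus $I(X)$ in $\mathbb{P}^n$ has the same resurgence as the tight case $I(Y)$ in $\mathbb{P}^{st-1}$, already computed to be $\frac{2(s-1)}{s}$. The strict inequalities for $s \ge 3$ then follow immediately, since $1 < \frac{2(s-1)}{s} < 2$.
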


\begin{remark}\label{nonequal1}
If $s=1$, then the ideal $I$ of Theorem \ref{generallinear} 
is generated by variables, and so 
$\rho (I) = \rho_a (I)= 1$. Thus, the assumption $s \ge 2$ is harmless. 
The case $t=1$ in Theorem \ref{generallinear}
was first proved in \cite{BH}, while the case 
$t=2$ is found in \cite{GHVT}. 

The final assertion of Theorem \ref{generallinear} gives examples where neither the lower bound nor the upper bound for the asymptotic resurgence in Lemma \ref{resurgencebounds} are sharp. 
\end{remark}

As preparation, we note the following observation. 

\begin{lemma}
\label{lem:regular element}
Let $0 \neq I \subset R$ be a monomial ideal, and let $y$ be a new variable. 
Consider the ideal $(I, y)$ in $S = R[y]$. Then 
\[
\rho\left((I, y)\right) = \rho (I) \quad \text{ and } \quad \rho_a\left((I, y)\right) = \rho_a (I). 
\]
\end{lemma}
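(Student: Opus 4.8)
\textbf{Proof proposal for Lemma \ref{lem:regular element}.}

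The plan is to show that symbolic and ordinary powers of $(I,y)$ decompose cleanly enough that the non-containment behavior of $(I,y)$ is governed entirely by that of $I$. First I would record the ordinary powers: since $y$ is a new variable, $(I,y)^r = \sum_{i=0}^r I^i y^{r-i} = (I^r, I^{r-1}y, \ldots, Iy^{r-1}, y^r)$. For symbolic powers, I would use the fact that $y$ is a nonzerodivisor and that the primary decomposition of $(I,y)$ in $S = R[y]$ is obtained from that of $I$ by adjoining the prime $(P,y)$ for each associated prime $P$ of $I$ (together with possibly $(y)$ if needed when $I$ is not $y$-saturated — but since $I \subset R$ already, the associated primes of $(I,y)$ are exactly $\{(P,y) : P \in \mathrm{Ass}(I)\}$). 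Hence $(I,y)^{(m)} = \bigcap_P \big((P,y)^m S_{(P,y)} \cap S\big)$; a short computation (or a direct appeal to, e.g., the behavior of symbolic powers under adjoining a variable) gives $(I,y)^{(m)} = \sum_{i=0}^m I^{(i)} y^{m-i} = (I^{(m)}, I^{(m-1)}y, \ldots, y^m)$. The key structural point is that both $(I,y)^r$ and $(I,y)^{(m)}$ are ``triangular'' sums of powers of $I$ (ordinary resp. symbolic) times powers of $y$.

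Next I would translate the containment $(I,y)^{(m)} \subseteq (I,y)^r$ into containments over $R$. Working in $S = R[y]$ with the grading by $y$-degree, $(I,y)^{(m)} \subseteq (I,y)^r$ holds if and only if for every $j = 0, 1, \ldots, m$ the $y^j$-graded piece satisfies $I^{(m-j)} \subseteq$ (the coefficient ideal of $y^j$ in $(I,y)^r$). For $j \ge r$ the target contains $R$ (it has the $y^r$ term and higher), so those pieces are automatic. For $0 \le j < r$ the coefficient of $y^j$ in $(I,y)^r$ is $I^{r-j}$, so the condition is $I^{(m-j)} \subseteq I^{r-j}$ for $0 \le j \le \min(m, r-1)$. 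Since $I^{(m-j)} \subseteq I^{(m)}$ and $I^{r-j} \supseteq I^r$, the binding constraint is $j=0$: thus $(I,y)^{(m)} \subseteq (I,y)^r \iff I^{(m)} \subseteq I^r$. This immediately gives $\rho\big((I,y)\big) = \sup\{m/r : (I,y)^{(m)} \not\subseteq (I,y)^r\} = \sup\{m/r : I^{(m)} \not\subseteq I^r\} = \rho(I)$. Replacing $(m,r)$ by $(mt, rt)$ throughout and taking ``for all $t \gg 0$'' gives the identical reduction for $\rho_a$, so $\rho_a\big((I,y)\big) = \rho_a(I)$.

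I expect the main obstacle to be justifying the symbolic power formula $(I,y)^{(m)} = \sum_{i} I^{(i)} y^{m-i}$ rigorously, in particular pinning down $\mathrm{Ass}((I,y))$ and checking that localizing at $(P,y)$ and intersecting back with $S$ produces exactly the claimed triangular sum. The cleanest route is probably to verify the equality monomial-by-monomial when $I$ is a monomial ideal (which suffices here, as the hypothesis is $0 \neq I \subset R$ a monomial ideal): a monomial $x^a y^k$ lies in $(I,y)^{(m)}$ iff it survives in every relevant localization, and one checks this is equivalent to $x^a \in I^{(\max(m-k,0))}$, matching $\sum_i I^{(i)} y^{m-i}$. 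A subtlety to watch: one must make sure no extraneous associated prime (such as $(y)$ or an embedded prime) appears for $(I,y)$; since $I \neq (0)$ and $I \subseteq R$, adjoining $y$ does not create a $y$-torsion issue, and $(I,y) = I S + yS$ has $\mathrm{Ass}$ equal to $\{(P,y) : P \in \mathrm{Ass}(I)\}$ by the standard behavior of $\mathrm{Ass}$ under this construction. Once the two power formulas are in hand, the graded-piece comparison in the previous paragraph is routine and finishes both equalities.
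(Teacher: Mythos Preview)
Your decomposition of $(I,y)^r$ and $(I,y)^{(m)}$ by $y$-degree is correct and is essentially the framework the paper uses as well (the paper phrases it as looking at a minimal monomial generator $x^a y^b$). The gap is in the sentence ``Since $I^{(m-j)} \subseteq I^{(m)}$ and $I^{r-j} \supseteq I^r$, the binding constraint is $j=0$.'' The first containment is backwards: for $j \ge 0$ one has $I^{(m)} \subseteq I^{(m-j)}$, not the other way around. With the correct inclusion your monotonicity argument collapses, and the claimed equivalence $(I,y)^{(m)} \subseteq (I,y)^r \iff I^{(m)} \subseteq I^r$ is no longer justified: knowing $I^{(m)} \subseteq I^r$ does not formally force $I^{(m-j)} \subseteq I^{r-j}$ for each $0 \le j < r$.

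The repair is exactly the step the paper supplies. One should not try to prove the containment equivalence for a fixed pair $(m,r)$, but instead work directly with the resurgence. Assume $\tfrac{m}{r} > \rho(I)$; since $\rho(I) \ge 1$ this forces $m \ge r$, and then for every $0 \le j < r$ one has
\[
\frac{m-j}{r-j} \;\ge\; \frac{m}{r} \;>\; \rho(I),
\]
so $I^{(m-j)} \subseteq I^{r-j}$ by the definition of $\rho(I)$. Together with the automatic inclusions for $j \ge r$, this yields $(I,y)^{(m)} \subseteq (I,y)^r$ and hence $\rho((I,y)) \le \rho(I)$. The reverse inequality is the easy direction you already have (the $y$-degree $0$ slice). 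The same inequality $\tfrac{m-j}{r-j} \ge \tfrac{m}{r}$ handles $\rho_a$ after replacing $(m,r)$ by $(mt,rt)$. So your plan is sound once you replace the faulty ``binding constraint'' step with this ratio inequality, which is precisely the mechanism in the paper's proof.
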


\begin{proof} 
First we show $\rho((I, y)) \ge  \rho (I)$ and $\rho_a ((I, y)) \ge_a  \rho (I)$. To this end, assume 
$I^{(m t)} \nsubseteq  I^{r t}$ for some positive integers $m, r$ and $t$. Thus, 
there is a monomial $x^a = x_0^{a_0} \cdots x_n^{a_n} \in I^{(mt)}$ 
with $x^a \notin I^{rt}$. 
It follows that $x^a \in (I,y)^{(mt)}$, but $x^a \notin (I, y)^{rt}$, which implies both 
$\rho ((I, y)) \ge  \rho (I)$ and $\rho_a ((I, y)) \ge  \rho_a (I)$. 

Second, we prove $\rho ((I, y)) \le  \rho (I)$. Consider  positive integers $m$ and $r$ with $\frac{m}{r} > \rho(I)$. It 
suffices to show $(I,y)^{(m)} \subseteq (I,y)^r$. 
To this end consider a minimal generator $x^a y^b$ of $(I,y)^{(m)}$. 
If $b \ge r$, 
then clearly $x^a y^b \in (I, y)^r$, and we are done. Otherwise, $b < r < m$, 
and $x^a y^b \in (I,y)^{(m)}$ gives $x^a \in I^{(m-b)}$. Now 
$\frac{m-b}{r-b} \ge \frac{m}{r} > \rho (I)$ 
implies $I^{(m-b)} \subseteq I^{r-b}$, 
and hence $x^a \in I^{r-b}$. It follows that $x^a y^b$ is in $(I,y)^r$, 
which shows 
$(I,y)^{(m)} \subseteq (I,y)^r$. 

Similarly, one establishes $\rho_a ((I, y)) \le  \rho_a (I)$
\end{proof}

\begin{proof}[Proof of Theorem \ref{generallinear}]
Because the linear varieties $L_i$ are in general position, we may assume
\[
I(L_i) = (x_0,x_1,\ldots,\widehat x_{(i-1)t},
\ldots,\widehat x_{it-1},x_{it},\ldots,x_n),
\]
where the $~\widehat{}~$ denotes an omitted variable.
In particular, $I(X) = \bigcap_{i=1}^s I(L_i)$ is a squarefree monomial ideal,
so we can apply Theorem \ref{mainresult} to calculate $\widehat\alpha(I(X))$.

If $1 \leq st < n+1$, we wish to minimize $x_0 + x_1 + \cdots + x_n$
subject to
\[x_0 + x_1 + \cdots + \widehat x_{(i-1)t} + \cdots +
   \widehat x_{it-1} + \cdots + x_n \geq 1 ~~\mbox{for $i = 1,\ldots,s$.}\]
Since $st < n+1$, the vector 
$\mathbf{y}^T = \begin{bmatrix}0 & \cdots & 0 & 1 \end{bmatrix}$
is a feasible solution, so the minimum is at most 1.
On the other hand, because $x_0 + \cdots + x_n \geq x_t + \cdots + x_n \geq 1$,
the minimum solution is at least 1.    So $\widehat\alpha(I(X)) = 1$ in this situation.

If $st = n+1$, then the matrix $A$ of Theorem \ref{mainresult} is an $s\times (n+1)$ matrix whose
$i$-th row consists of $(i-1)t$ $1$'s, followed by $t$ $0$'s, followed by $n+1-it$ $1$'s.
The vector ${\bf y}$ with 
\[
\mathbf{y}^T = 
\underbrace{
\begin{bmatrix}
\frac{1}{n+1-t}& \cdots & \frac{1}{n+1-t}
\end{bmatrix}}_{n+1}
\]
is a feasible solution to the linear program of Theorem \ref{mainresult}, and so
$\widehat\alpha(I(X)) \leq \frac{n+1}{n+1-t}$.
The associated dual linear program is as follows: maximize $y_0 + \cdots + y_n$ such that
$A^T \mathbf{y} \leq \mathbf{1}$.
We claim that the $s$-tuple ${\bf y}$ with
\[
\mathbf{y}^T = 
\begin{bmatrix}\frac{t}{n+1-t} & \cdots & \frac{t}{n+1-t}
\end{bmatrix}
\]
is a feasible solution.  Indeed, observe that in each entry, 
$A^T \mathbf{y}$ is $(s-1)\left(\frac{t}{n+1-t}\right)
= \left(\frac{n+1-t}{t}\right)\left(\frac{t}{n+1-t}\right) = 1$, 
and thus $\widehat\alpha(I(X)) \geq \frac{st}{n+1-t} = 
\frac{n+1}{n+1-t}$.
Combining inequalities gives us the desired result 
for the Waldschmidt constant. 

For the remaining claims, we consider first the case where $n+1 = st$.  
Note that then $\alpha(I(X)) = \omega (I(X)) = 2$. 
Moreover, $s \ge 2$ implies $t \le \frac{n+1}{2}$. It follows that 
$2 (t-1) = \dim L_i + \dim L_j < n$, and thus $X$ is smooth.  
Hence, Lemma \ref{resurgencebounds}
gives \[\rho_a(I(X)) = \frac{\alpha(I(X))}{\widehat\alpha(I(X))} = \frac{2}{(n+1)/(n+1-t)} 
=  \frac{2 \cdot (s-1)}{s} \leq \rho (I(X)). 
\]
Thus, in order to determine $\rho (I(X))$
it suffices to show: 
If $m$ and $r$ are positive integers with 
$\frac{m}{r} > \frac{2 \cdot (s-1)}{s}$, 
then $I(X))^{(m)} \subset I(X)^r$. To this end we adapt the 
argument employed in the 
proof of \cite[Theorem 1.5]{GHVT}. Consider the ring homomorphism 
\[
\ffi: R \to S = k[y_0,\ldots,y_{s-1}],  \text{ defined by } x_i \mapsto y_j 
\text{ if } jt \leq i < (j+1) t. 
\]
Note that, for each $i \in [s]=  \{1,\ldots,s\}$, the ideal of 
$S$ generated by   
$\ffi (I(L_i))$ is $P_i = (y_0,\ldots,\widehat{y}_{i-1},\ldots,y_{s-1})$. Thus, 
$J = P_1 \cap \cdots \cap P_s$ is the ideal of the $s$ coordinate points in 
$\mathbb P^{s-1}$. 

Consider a monomial $x^a = x_0^{a_0} \cdots x_n^{a_n} \in R$. Then $x^a$ is in 
$I^{(m)} = \bigcap_{i=1}^s I(L_i)^m$ if and only if 
$\deg (x^a) - (a_{(i-1) t} + a_{(i-1) t + 1}  +  \cdots + a_{i t - 1}) \ge m$ 
for each  $i \in [s]$. Furthermore, 
a monomial $y^b = y_0^{b_0} \cdots y_{s-1}^{b_{s-1}} \in S$ is 
in $J^{(m)} = \bigcap_{i=1}^s P_i^m$ if and only 
if $\deg (y^b) - b_{i-1} \ge m$ for 
every $i \in [s]$. It follows that  
\begin{equation}
  \label{eq:monomial}
x^a \in I^{(m)} \; \text{ if and only if } \; \ffi (x^a) \in J^{(m)}. 
\end{equation} 

Consider now any monomial $x^a$ in $I^{(m)}$. The 
equivalence \eqref{eq:monomial} 
gives $\ffi (x^a) \in J^{(m)}$.  
Since $\rho (J) = \frac{2 \cdot (s-1)}{s}$ by \cite[Theorem 2.4.3]{BH}, our 
assumption $\frac{m}{r} > \frac{2 \cdot (s-1)}{s}$ yields 
$J^{(m)} \subseteq J^r$. 
Hence, we can write $\ffi (x^a) = \pi_1 \cdots \pi_r$, 
where each $\pi_j$ is a monomial in $J$. 
Equivalence \eqref{eq:monomial} implies now that there are monomials 
$\mu_1,\ldots,\mu_r \in I$ such that $\ffi (\mu_j) = \pi_j$ for each $j$ and 
$x^a = \mu_1 \cdots \mu_r$. It follows that $x^a \in I^r$, and hence  
$I^{(m)} \subset I^r$, as desired. 

Finally, assume $n \ge s t$. Then $I(X)$ is the sum of  
$n+1 - s t$ variables 
and the extension ideal $I(Y) R$ of  the ideal of the union $Y$ of $s$ 
general $(t-1)$-dimensional linear subspaces in $\mathbb{P}^{st-1}$. Thus, 
Lemma \ref{lem:regular element} yields $\rho (I(X)) = \rho (I(Y))$ and $\rho_a (I(X)) = \rho_a (I(Y))$, and hence 
$\rho (I(X)) = \rho_a (I(X)) = \frac{2 \cdot (s-1)}{s}$. However, $\frac{\alpha(I(X))}{\widehat\alpha(I(X))} = \frac{1}{1} = 1$ and $\frac{\omega (I(X))}{\widehat\alpha(I(X))}= \frac{2}{1} = 2$. 
\end{proof}


\subsection{Stanley-Reisner ideals of uniform matroids}
We use our methods to determine the Waldschmidt constant of 
the Stanley-Reisner ideal $I_{n+1,c}$ of a uniform matroid $\Delta$ on $n+1$ 
vertices whose facets are all the cardinality $n+1-c$ subsets 
of the vertex set.  These
ideals were also recently studied by Geramita, Harbourne, Migliore, and Nagel \cite{GHMN} and
Lampa-Baczy\'{n}ska and Malara \cite{LM}.
The ideal $I_{n+1,c}$ is generated by all squarefree monomials of 
degree $n+2-c$ in $R$. Equivalently,
\begin{equation}
\label{eq:matroid}
I_{n+1, c} = \bigcap_{0 \le i_1 < i_2 < \cdots < i_c \le n} (x_{i_1}, x_{i_2},\ldots,x_{i_c}).
\end{equation}

\begin{theorem}
 \label{prop:Waldschmidt matroid} The 
Stanley-Reisner ideal of a $(n-c)$-dimensional uniform matroid 
on $n+1$ vertices has Waldschmidt constant
\[
\widehat\alpha(I_{n+1, c}) = \frac{n+1}{c}.
\]
\end{theorem}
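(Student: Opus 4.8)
The plan is to compute $\widehat\alpha(I_{n+1,c})$ as the value of the linear program of Theorem \ref{mainresult}. By the displayed primary decomposition \eqref{eq:matroid}, the matrix $A$ has its rows indexed by the $c$-subsets $S = \{i_1,\ldots,i_c\} \subseteq \{0,1,\ldots,n\}$, and the row corresponding to $S$ has a $1$ in exactly the $c$ positions in $S$. Thus Theorem \ref{mainresult} tells us that $\widehat\alpha(I_{n+1,c})$ is the optimal value of the LP
\[
\text{minimize } \ \sum_{i=0}^n y_i \quad \text{subject to} \quad \sum_{i \in S} y_i \ge 1 \ \text{ for every $c$-subset } S, \ \text{ and } \ \mathbf{y} \ge \mathbf{0}.
\]

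For the upper bound, I would simply exhibit the feasible solution $y_i = \tfrac{1}{c}$ for all $i$: each constraint then reads $\sum_{i \in S} y_i = c \cdot \tfrac{1}{c} = 1 \ge 1$, so this point is feasible, and its objective value is $\tfrac{n+1}{c}$. Hence $\widehat\alpha(I_{n+1,c}) \le \tfrac{n+1}{c}$.

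For the lower bound there are two routes, and I would present whichever is cleanest. The self-contained one is an averaging argument: given any feasible $\mathbf{y}$, sum the constraint $\sum_{i\in S} y_i \ge 1$ over all $\binom{n+1}{c}$ subsets $S$; since each index $i$ lies in exactly $\binom{n}{c-1}$ of these subsets, we get $\binom{n}{c-1}\sum_{i=0}^n y_i \ge \binom{n+1}{c}$, i.e. $\sum_{i=0}^n y_i \ge \binom{n+1}{c}/\binom{n}{c-1} = \tfrac{n+1}{c}$. As this holds for every feasible solution, it holds for the optimal one, giving $\widehat\alpha(I_{n+1,c}) \ge \tfrac{n+1}{c}$. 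Alternatively, one can invoke Theorem \ref{mainresult3} directly: here $\alpha(I_{n+1,c}) = n+2-c$ (the ideal is generated in degree $n+2-c$) and $\mathrm{big\text{-}height}(I_{n+1,c}) = c$, so Theorem \ref{mainresult3} gives $\widehat\alpha(I_{n+1,c}) \ge \tfrac{(n+2-c)+c-1}{c} = \tfrac{n+1}{c}$; this second route has the bonus of showing that the bound of Theorem \ref{mainresult3} is attained, which is exactly the sharpness claim recorded in the remark following it. Combining the two inequalities yields $\widehat\alpha(I_{n+1,c}) = \tfrac{n+1}{c}$.

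Honestly, there is no serious obstacle here: the whole argument rides on having set up the LP correctly from \eqref{eq:matroid} and on the elementary binomial identity $\binom{n+1}{c}/\binom{n}{c-1} = \tfrac{n+1}{c}$. The only point that deserves a sentence of care is the identification of $\alpha$ and the big height if the second route to the lower bound is used; the symmetry of the uniform matroid makes the guess $y_i = 1/c$ for the optimal solution immediate, so the real content is just checking feasibility and doing the averaging count.
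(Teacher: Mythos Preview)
Your argument is correct and matches the paper's own proof essentially line for line: the paper also uses Theorem~\ref{mainresult}, exhibits the feasible solution $y_i=\tfrac{1}{c}$ for the upper bound, and obtains the lower bound by the same averaging/summation over all $\binom{n+1}{c}$ constraints using that each $y_i$ appears in $\binom{n}{c-1}$ of them. Your alternative lower-bound route via Theorem~\ref{mainresult3} is precisely what the paper records in Remark~\ref{lowerboundremark}.
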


\begin{proof}
This follows by \cite[Lemma 2.4.1, Lemma 2.4.2 and the proof of Theorem 2.4.3]{BH}. 
However, we wish to provide a direct argument here.

We use Theorem \ref{mainresult}. By Equation \eqref{eq:matroid}, each 
left-hand side of the $\binom{s+1}{c}$ inequalities  in $A{\bf y}  \geq {\bf 1}$ 
is a sum of $c$ distinct variables in $\{y_1,\ldots,y_{n+1}\}$. Thus,
\[
y_1 = \cdots = y_{n+1} = \frac{1}{c}
\]
is a feasible solution, which gives 
$\widehat{\alpha} (I_{n+1, c}) \le \frac{n+1}{c}$.

Now observe that each $y_i$ appears in $\binom{n}{c-1}$ inequalities of
$A{\bf y}  \geq {\bf 1}$. Hence, summing over all these inequalities we get
\[
\binom{n}{c-1} [y_1 + \cdots + y_{n+1}] \ge \binom{n+1}{c},
\]
which yields $\widehat\alpha(I_{n+1, c}) \ge \binom{n+1}{c}/\binom{n}{c-1} = 
\frac{n+1}{c}$, completing the argument.
\end{proof}

\begin{remark}\label{lowerboundremark}
The ideal $I_{n+1,c}$ give an example of a family of squarefree 
monomial ideals that
achieves the lower bound of Theorem \ref{mainresult3}.   Indeed, 
big-height$(I_{n+1,c}) = c$ and $\alpha(I_{n+1,c}) = n+2-c$, so 
\[\widehat\alpha(I_{n+1,c}) = \frac{n+1}{c} = \frac{\alpha(I) + c - 1}{c}.\]
\end{remark}

\subsection{A binomial-like theorem for symbolic powers of 
monomial ideals} 

In this subsection we introduce some results which will be useful in Subsection \ref{mixedeight}. They are also
of independent interest.

\begin{lemma}\label{LemmaFactorSymbolicPowerProduct}
Let $I_1$ and $I_2$ be squarefree monomial ideals whose generators are
expressed in terms of disjoint sets of variables $x_1,\ldots,x_n$ and
$y_1,\ldots,y_m$, respectively.
Let ${\rm Ass}(I_1) = \{P_1,\ldots,P_a\}$ and ${\rm Ass}(I_2) = 
\{Q_1,\ldots,Q_b\}$.
Then
\[
    I_1^{(s)} I_2^{(t)} = \bigcap\limits_{i,j} P_i^s Q_j^t.
\]
\end{lemma}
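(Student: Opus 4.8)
The plan is to prove $I_1^{(s)} I_2^{(t)} = \bigcap_{i,j} P_i^s Q_j^t$ by a direct monomial computation, exploiting the fact that the two groups of variables are disjoint. Since all ideals in sight are monomial ideals, it suffices to check that a monomial $x^a y^b = x_1^{a_1}\cdots x_n^{a_n} y_1^{b_1}\cdots y_m^{b_m}$ lies in the left-hand side if and only if it lies in the right-hand side. For the membership criterion on each side I would invoke Lemma \ref{lem:monomialsinsymbolic} (together with Theorem \ref{symbolicmonomial}), which translates $x^{a} \in I_1^{(s)}$ into a finite system of linear inequalities $\sum_{x_k \in P_i} a_k \ge s$ ranging over $i = 1,\ldots,a$, and similarly $y^{b} \in I_2^{(t)}$ into $\sum_{y_\ell \in Q_j} b_\ell \ge t$ for $j = 1,\ldots,b$.

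First I would analyze the right-hand side. Because $P_i$ is generated by some of the $x$'s and $Q_j$ by some of the $y$'s, and these variable sets are disjoint, a monomial $x^a y^b$ lies in $P_i^s Q_j^t$ exactly when $x^a \in P_i^s$ and $y^b \in Q_j^t$; concretely, $\sum_{x_k \in P_i} a_k \ge s$ and $\sum_{y_\ell \in Q_j} b_\ell \ge t$. Intersecting over all $i$ and $j$ shows $x^a y^b \in \bigcap_{i,j} P_i^s Q_j^t$ if and only if $\sum_{x_k \in P_i} a_k \ge s$ for every $i$ \emph{and} $\sum_{y_\ell \in Q_j} b_\ell \ge t$ for every $j$. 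By the membership criterion above this is precisely the condition that $x^a \in I_1^{(s)}$ and $y^b \in I_2^{(t)}$, i.e. that $x^a y^b \in I_1^{(s)} I_2^{(t)}$ (again using disjointness of variables, so that a monomial lies in the product ideal iff it factors with one factor in each). This establishes both inclusions simultaneously.

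The one point that requires a little care — and what I expect to be the main (though modest) obstacle — is justifying that membership in a \emph{product} of two monomial ideals in disjoint variables is equivalent to the naive factorization: a monomial $\mu$ lies in $J_1 J_2$ iff $\mu = \mu_1 \mu_2$ with $\mu_i \in J_i$. One direction is immediate; for the other, if $\mu \in J_1 J_2$ then $\mu$ is divisible by $g_1 g_2$ for generators $g_i$ of $J_i$, and since $g_1$ involves only $x$'s and $g_2$ only $y$'s, the $x$-part of $\mu$ is divisible by $g_1$ and the $y$-part by $g_2$, giving the desired factorization. The analogous statement for $P_i^s Q_j^t$ versus "$x$-part in $P_i^s$ and $y$-part in $Q_j^t$" follows the same way. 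Once this elementary disjoint-variables lemma is in hand, the chain of equivalences above closes the proof with no further work; the $\bigcap$ on the right simply records the conjunction of all the linear inequalities, which is exactly what Theorem \ref{symbolicmonomial} and Lemma \ref{lem:monomialsinsymbolic} attach to the left-hand side.
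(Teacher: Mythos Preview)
Your proof is correct and rests on the same essential observation as the paper's: for monomial ideals in disjoint sets of variables, membership in a product factors cleanly. The paper, however, packages this more economically at the ideal level rather than monomial-by-monomial. It notes that for monomial ideals $J_1, J_2$ in disjoint variables one has $J_1 J_2 = J_1 \cap J_2$, applies this to get $P_i^s Q_j^t = P_i^s \cap Q_j^t$, and then simply regroups the big intersection:
\[
\bigcap_{i,j} P_i^s Q_j^t \;=\; \bigcap_{i,j} \big(P_i^s \cap Q_j^t\big) \;=\; \Big(\bigcap_i P_i^s\Big) \cap \Big(\bigcap_j Q_j^t\Big) \;=\; I_1^{(s)} \cap I_2^{(t)} \;=\; I_1^{(s)} I_2^{(t)},
\]
the last equality again by disjoint variables. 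This avoids the detour through Lemma~\ref{lem:monomialsinsymbolic} and the explicit linear inequalities, which in your argument function only as a proxy for the equivalence $x^a \in \bigcap_i P_i^s \Leftrightarrow x^a \in I_1^{(s)}$ --- something immediate from Theorem~\ref{symbolicmonomial} alone. Your element-wise verification is perfectly valid and has the virtue of being completely explicit, but the paper's formulation makes it transparent that the lemma is really a one-line consequence of the identity ``product equals intersection for monomial ideals in disjoint variables.''
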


\begin{proof} All of these ideals are monomial. Since $P_i^s$ and $Q_j^t$ are ideals written in terms of distinct sets of variables,  $P_i^s \cap Q_j^t=P_i^s Q_j^t$ and by the same reasoning  $I_1^{(s)}\cap I_2^{(t)} = I_1^{(s)} I_2^{(t)}$. Thus
$$\bigcap\limits_{i,j} P_i^s Q_j^t=\bigcap\limits_{i,j} (P_i^s\cap Q_j^t)=\left(\bigcap\limits_{i}P_i^s\right) \cap \left(\bigcap\limits_{j}Q_j^t\right)= I_1^{(s)}\cap I_2^{(t)} = I_1^{(s)} I_2^{(t)}.$$
\end{proof}

From here, we deduce the symbolic binomial theorem.

\begin{theorem}\label{LemmaSymbolicBinomialTheorem}
 Let $I_1$ and $I_2$ be squarefree monomial ideals in variables
$x_1,\ldots,x_n$ and $y_1,\ldots,y_m$, respectively.
 Set $I = I_1 + I_2$.
    Then
    \[
        I^{(m)} = \sum\limits_{j=0}^m I_1^{(m-j)} I_2^{(j)}.
    \]
\end{theorem}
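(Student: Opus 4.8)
The strategy is to reduce the statement to Lemma~\ref{LemmaFactorSymbolicPowerProduct} by computing both sides as explicit intersections of powers of the associated primes and matching them monomial by monomial. First I would fix the primary decompositions $I_1 = \bigcap_{i=1}^a P_i$ and $I_2 = \bigcap_{j=1}^b Q_j$, where each $P_i$ is generated by a subset of $\{x_1,\ldots,x_n\}$ and each $Q_j$ by a subset of $\{y_1,\ldots,y_m\}$. Since the $P_i$ and $Q_j$ involve disjoint variable sets, the associated primes of $I = I_1 + I_2$ are exactly the ideals $P_i + Q_j$ for $1 \le i \le a$, $1 \le j \le b$; this is a standard fact about sums of ideals in disjoint variables (it also follows from the fact that $R/I \cong (R_1/I_1) \otimes_k (R_2/I_2)$). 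Hence by Theorem~\ref{symbolicmonomial},
\[
I^{(m)} = \bigcap_{i,j} (P_i + Q_j)^m = \bigcap_{i,j} \sum_{\ell=0}^m P_i^{m-\ell} Q_j^{\ell},
\]
where the last equality is the ordinary binomial expansion of $(P_i + Q_j)^m$ (which, because $P_i$ and $Q_j$ are monomial ideals in disjoint variables, is genuinely $\sum_\ell P_i^{m-\ell}Q_j^\ell$ with no cross terms to worry about). Dually, using Lemma~\ref{LemmaFactorSymbolicPowerProduct}, the right-hand side is $\sum_{\ell=0}^m I_1^{(m-\ell)} I_2^{(\ell)} = \sum_{\ell=0}^m \bigcap_{i,j} P_i^{m-\ell} Q_j^{\ell}$.

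The heart of the proof is therefore the combinatorial identity
\[
\bigcap_{i,j} \left( \sum_{\ell=0}^m P_i^{m-\ell} Q_j^{\ell} \right) = \sum_{\ell=0}^m \left( \bigcap_{i,j} P_i^{m-\ell} Q_j^{\ell} \right).
\]
The inclusion $\supseteq$ is immediate since each $\bigcap_{i,j} P_i^{m-\ell}Q_j^\ell$ sits inside every $\sum_\ell P_i^{m-\ell}Q_j^\ell$. For $\subseteq$, I would argue monomial-wise: since all ideals in sight are monomial, it suffices to take a monomial $x^\alpha y^\beta$ (with $x^\alpha$ in the $x$'s and $y^\beta$ in the $y$'s, and we may assume each minimal generator splits this way) lying in the left-hand side and show it lies in the right-hand side. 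Membership of $x^\alpha y^\beta$ in $(P_i + Q_j)^m = \sum_\ell P_i^{m-\ell}Q_j^\ell$ is equivalent, via Lemma~\ref{lem:monomialsinsymbolic}-type reasoning applied to $P_i$ and $Q_j$ separately, to the condition that $v_{P_i}(x^\alpha) + v_{Q_j}(y^\beta) \ge m$, where $v_{P_i}(x^\alpha)$ is the sum of the exponents in $\alpha$ on the variables generating $P_i$, and similarly for $Q_j$. So the left-hand side membership says: for all $i, j$, $v_{P_i}(x^\alpha) + v_{Q_j}(y^\beta) \ge m$. Set $p = \min_i v_{P_i}(x^\alpha)$ and $q = \min_j v_{Q_j}(y^\beta)$; then $p + q \ge m$. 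Choosing $\ell$ with $m - p \le \ell \le q$ (such $\ell$ exists in $[0,m]$ since $p \le m$ and $q \le m$ can be arranged, or we truncate appropriately), we get $v_{P_i}(x^\alpha) \ge p \ge m - \ell$ for all $i$ and $v_{Q_j}(y^\beta) \ge q \ge \ell$ for all $j$, which says exactly that $x^\alpha y^\beta \in \bigcap_{i,j} P_i^{m-\ell} Q_j^\ell$. This places $x^\alpha y^\beta$ in the right-hand side and completes the inclusion.

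The main obstacle I anticipate is the bookkeeping at two points: first, justifying cleanly that $\mathrm{Ass}(I_1 + I_2) = \{P_i + Q_j\}$ and that $(P_i+Q_j)^m$ has no genuine cross terms beyond $\sum_\ell P_i^{m-\ell}Q_j^\ell$ — this is where the disjointness of variables is essential and should be invoked explicitly rather than waved at; and second, the reduction to split monomials $x^\alpha y^\beta$ and the careful choice of the index $\ell$ so that it simultaneously lies in $[0,m]$ and satisfies $m - \ell \le p$, $\ell \le q$. One must be slightly careful when $p > m$ or $q > m$ (then the relevant exponent constraint is automatically satisfied and one clamps $\ell$ to $0$ or $m$), but these are edge cases rather than real difficulties. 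Everything else is routine manipulation of monomial ideals, and Lemma~\ref{LemmaFactorSymbolicPowerProduct} does the work of collapsing the intersections of products back into symbolic powers of $I_1$ and $I_2$.
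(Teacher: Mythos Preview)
Your approach is correct and follows the same skeleton as the paper's proof: write $I^{(m)} = \bigcap_{i,j}(P_i+Q_j)^m$, expand each $(P_i+Q_j)^m$ binomially, interchange the intersection and the sum, and collapse via Lemma~\ref{LemmaFactorSymbolicPowerProduct}. The only difference lies in how the interchange
\[
\bigcap_{i,j}\sum_\ell P_i^{m-\ell}Q_j^\ell \;=\; \sum_\ell\bigcap_{i,j} P_i^{m-\ell}Q_j^\ell
\]
is justified: the paper simply invokes the distributive (``modular'') law $L\cap(J+K)=L\cap J + L\cap K$ for monomial ideals, whereas you argue monomial-by-monomial via $p=\min_i v_{P_i}(x^\alpha)$, $q=\min_j v_{Q_j}(y^\beta)$, and an explicit choice of $\ell\in[\max(0,m-p),\min(m,q)]$. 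Your version is in fact the more careful of the two: iterating the distributive law naively yields a sum over \emph{all} assignments $(i,j)\mapsto\ell$, not just constant ones, and some argument---precisely yours, or an equivalent---is needed to see that the non-constant terms are absorbed. The edge cases you flag are handled correctly by your $\max/\min$ clamping, so there is no gap.
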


\begin{proof}
Using the notation of Lemma \ref{LemmaFactorSymbolicPowerProduct},
the associated primes of $I$ are of the form $P_i + Q_j$ for all $i,j$.
Thus,
\begin{align*}
  I^{(m)} &= \bigcap\limits_{i,j} (P_i+Q_j)^m = \bigcap\limits_{i,j} (P_i^m + P_i^{m-1} Q_j + \cdots + P_i Q_j^{m-1} + Q_j^m)\\
  &= \bigcap\limits_i P_i^m + \bigcap\limits_{i,j} P_i^{m-1} Q_j + \cdots + \bigcap\limits_{i,j} P_i Q_j^{m-1} + \bigcap_j Q_j^m \\ 
  &= I_1^{(m)} + I_1^{(m-1)} I_2 + \cdots + I_1 I_2^{(m-1)} + I_2^{(m)},
\end{align*}
where the third equality follows by use of the modular law for monomial
ideals (i.e. $L\cap(J+K) = L\cap J + L\cap K$ for monomial ideals $J,K,L$)
and the last equality follows from Lemma \ref{LemmaFactorSymbolicPowerProduct}.
\end{proof}

We give to applications for this theorem.

\begin{corollary}
If $G_1$ and $G_2$ are disjoint graphs, then
\[
I(G_1\cup G_2)^{(m)} = \sum\limits_{j=0}^m I(G_1)^{(m-j)} I(G_2)^{(j)}.
\]
\end{corollary}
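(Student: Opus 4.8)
The plan is to recognize this as a direct specialization of Theorem \ref{LemmaSymbolicBinomialTheorem}. First I would observe that since $G_1$ and $G_2$ are disjoint graphs, their vertex sets are disjoint, say $V(G_1) = \{x_1,\ldots,x_n\}$ and $V(G_2) = \{y_1,\ldots,y_m\}$, and the edge set of $G_1 \cup G_2$ is exactly the (disjoint) union of the edge sets of $G_1$ and $G_2$. From the definition of the edge ideal, every minimal generator of $I(G_1 \cup G_2)$ is either $x_ix_j$ for an edge of $G_1$ or $y_iy_j$ for an edge of $G_2$; hence $I(G_1 \cup G_2) = I(G_1) + I(G_2)$, with $I(G_1)$ generated by monomials in the $x$'s only and $I(G_2)$ generated by monomials in the $y$'s only.

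Next I would set $I_1 = I(G_1)$ and $I_2 = I(G_2)$ and simply invoke Theorem \ref{LemmaSymbolicBinomialTheorem}, whose hypotheses (two squarefree monomial ideals in disjoint variable sets) are precisely what we have just verified. This gives
\[
I(G_1 \cup G_2)^{(m)} = (I_1 + I_2)^{(m)} = \sum_{j=0}^m I_1^{(m-j)} I_2^{(j)} = \sum_{j=0}^m I(G_1)^{(m-j)} I(G_2)^{(j)},
\]
which is the claimed identity.

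There is essentially no obstacle here: the only point requiring a word of care is the identification $I(G_1 \cup G_2) = I(G_1) + I(G_2)$, which is immediate from the edge-ideal construction once one notes that a graph on a disjoint union of vertex sets has no edges crossing between the two parts (so no new generators appear beyond those coming from $G_1$ and $G_2$ separately). Everything else is a quotation of the previously established symbolic binomial theorem.
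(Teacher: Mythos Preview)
Your proposal is correct and matches the paper's intended approach exactly: the paper states this corollary immediately after Theorem \ref{LemmaSymbolicBinomialTheorem} without even writing out a proof, since the only content is the observation $I(G_1\cup G_2)=I(G_1)+I(G_2)$ for disjoint graphs followed by a direct application of that theorem. There is nothing to add.
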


As a second application, we are able to give a new proof for Corollary \ref{disjoint}. Unlike our first proof, which is combinatorial, the methods of this new proof are entirely algebraic.

\begin{corollary}\label{disjoint2}
Suppose that $I$ and $J$ are two squarefree monomial ideals
of the ring $R = k[x_1,\ldots,x_n,y_1,\ldots,y_n]$.
Furthermore, suppose that $I$ is generated
by monomials only in the $x_i$'s and
$J$ is generated by monomials only in the $y_j$'s.
Then
\[\widehat\alpha(I+J) = \min\{\widehat\alpha(I),\widehat\alpha(J)\}.\]
\end{corollary}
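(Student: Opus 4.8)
The plan is to deduce Corollary \ref{disjoint2} directly from the Symbolic Binomial Theorem (Theorem \ref{LemmaSymbolicBinomialTheorem}) by analyzing the degree of a minimal-degree element of $(I+J)^{(m)}$. Since $I$ uses only the $x_i$'s and $J$ only the $y_j$'s, Theorem \ref{LemmaSymbolicBinomialTheorem} gives $(I+J)^{(m)} = \sum_{j=0}^m I^{(m-j)}J^{(j)}$, so $\alpha((I+J)^{(m)}) = \min_{0\le j\le m}\alpha\!\left(I^{(m-j)}J^{(j)}\right)$. Because $I^{(m-j)}$ and $J^{(j)}$ are ideals in disjoint sets of variables, every generator of the product ideal factors as a generator of $I^{(m-j)}$ times a generator of $J^{(j)}$, and hence $\alpha\!\left(I^{(m-j)}J^{(j)}\right) = \alpha(I^{(m-j)}) + \alpha(J^{(j)})$ (with the convention $\alpha(I^{(0)}) = \alpha(R) = 0$). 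Thus
\[
\alpha((I+J)^{(m)}) = \min_{0 \le j \le m}\left(\alpha(I^{(m-j)}) + \alpha(J^{(j)})\right).
\]

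Next I would divide by $m$ and pass to the limit. Fix $\varepsilon > 0$. By Lemma \ref{subadditivity}(ii), $\widehat\alpha(I)$ is the infimum of $\alpha(I^{(p)})/p$, so choose $p$ with $\alpha(I^{(p)})/p < \widehat\alpha(I) + \varepsilon$; then for $m$ a large multiple of $p$, taking $j = m - p$ gives $\alpha((I+J)^{(m)})/m \le \alpha(I^{(p)})/m + \alpha(J^{(m-p)})/m$, and the second term is bounded (indeed $\alpha(J^{(m-p)}) \le (m-p)\alpha(J)$ grows at most linearly while being divided by $m$, but more carefully we want it to vanish in the limit relative to...). Here I need to be slightly more careful: the right comparison is to take $j$ proportional to $m$. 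A cleaner route: for any $0 \le \lambda \le 1$ and $j = \lfloor \lambda m \rfloor$, one gets $\alpha((I+J)^{(m)})/m \le \alpha(I^{(m-j)})/m + \alpha(J^{(j)})/m \to (1-\lambda)\widehat\alpha(I) + \lambda\widehat\alpha(J)$ as $m \to \infty$. Minimizing the linear function $\lambda \mapsto (1-\lambda)\widehat\alpha(I) + \lambda\widehat\alpha(J)$ over $\lambda \in [0,1]$ yields exactly $\min\{\widehat\alpha(I),\widehat\alpha(J)\}$, so $\widehat\alpha(I+J) \le \min\{\widehat\alpha(I),\widehat\alpha(J)\}$.

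For the reverse inequality, I would argue that $\alpha(I^{(m-j)}) + \alpha(J^{(j)}) \ge (m-j)\widehat\alpha(I) + j\widehat\alpha(J) \ge m\min\{\widehat\alpha(I),\widehat\alpha(J)\}$ for every $j$, using again that $\widehat\alpha$ is the infimum of $\alpha(I^{(p)})/p$ (so $\alpha(I^{(p)}) \ge p\,\widehat\alpha(I)$ for all $p$, and likewise for $J$, with the degenerate cases $j = 0$ or $j = m$ handled by the convention $\alpha(\text{(unit ideal power)}) = 0 \ge 0$). Taking the minimum over $j$ and dividing by $m$ gives $\alpha((I+J)^{(m)})/m \ge \min\{\widehat\alpha(I),\widehat\alpha(J)\}$ for every $m$, hence $\widehat\alpha(I+J) \ge \min\{\widehat\alpha(I),\widehat\alpha(J)\}$. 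Combining the two bounds finishes the proof.

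The main obstacle, such as it is, is purely bookkeeping: getting the limiting argument for the upper bound to land exactly on $\min\{\widehat\alpha(I),\widehat\alpha(J)\}$ rather than on some intermediate convex combination, which is handled by the observation that a linear function on an interval attains its minimum at an endpoint. There is no serious analytic difficulty because $\widehat\alpha$ is already known (Lemma \ref{subadditivity}) to be the infimum — not merely the limit — of $\alpha(I^{(m)})/m$, which makes both the $\le$ estimates (choose a good $p$) and the $\ge$ estimates ($\alpha(I^{(p)}) \ge p\,\widehat\alpha(I)$ always) immediate. One should also note explicitly that the factorization $\alpha(I^{(a)}J^{(b)}) = \alpha(I^{(a)}) + \alpha(J^{(b)})$ is exactly the content already used in the proof of Lemma \ref{LemmaFactorSymbolicPowerProduct} (disjoint variables force $P_i^a \cap Q_j^b = P_i^a Q_j^b$), so nothing new is needed there.
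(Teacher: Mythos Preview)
Your argument is correct and, for the lower bound $\widehat\alpha(I+J)\ge\min\{\widehat\alpha(I),\widehat\alpha(J)\}$, it is exactly the paper's proof: apply Theorem~\ref{LemmaSymbolicBinomialTheorem}, use $\alpha(I^{(a)}J^{(b)})\ge a\,\widehat\alpha(I)+b\,\widehat\alpha(J)$ from Lemma~\ref{subadditivity}(ii), and bound the resulting convex combination below by $m\cdot\min\{\widehat\alpha(I),\widehat\alpha(J)\}$.

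The only difference is in the upper bound. The paper does not invoke the binomial expansion there at all; it simply notes that $I^{(m)}\subseteq (I+J)^{(m)}$, whence $\alpha((I+J)^{(m)})\le\alpha(I^{(m)})$ and so $\widehat\alpha(I+J)\le\widehat\alpha(I)$ (and symmetrically for $J$). Your limiting argument with $j=\lfloor\lambda m\rfloor$ works, but it is more machinery than needed: indeed, your own endpoint $\lambda=0$ (i.e.\ $j=0$, so $J^{(0)}=R$ and $\alpha(J^{(0)})=0$) already collapses to the paper's one-line observation. So the detour through convex combinations and the ``minimum of a linear function at an endpoint'' remark is unnecessary, though not wrong.
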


\begin{proof}
Since $I^{(n)} \subseteq (I+J)^{(n)}$, we deduce that $\alpha(I^{(n)}) \ge \alpha ((I+J)^{(n)})$ for all $n\geq 0$. Thus we also have the inequality $\widehat\alpha(I) \ge \widehat\alpha(I+J)$ and similarly we obtain $\widehat\alpha(J) \ge \widehat\alpha(I+J)$. For the reverse inequality, from Theorem \ref{LemmaSymbolicBinomialTheorem}, we deduce that 
$$\alpha((I+J)^{(n)}) = \min_{0\leq m\leq n} \alpha(I^{(m)} J^{(n-m)}) \ge \min_{0\leq m\leq n} (m \widehat \alpha (I) + (n-m) \widehat \alpha(J)) \ge n \cdot \min(\widehat \alpha(I), \widehat\alpha (J)),$$
which allows to conclude $\widehat\alpha(I+J)\geq \min(\widehat \alpha(I), \widehat\alpha (J))$.
\end{proof}

\subsection{Monomial ideals of mixed height}\label{mixedeight}  For many families of ideals $I$
for which we know $\widehat\alpha(I)$ (or $\rho_a(I)$ and $\rho(I)$), the ideal
$I$ is unmixed, i.e., all of the associated primes of $I$ have the same height.  In this  final part of the paper, we present some initial 
results on the problem of computing
these invariants for ideals of mixed height.  

In particular, as a case study
we focus on the 
scheme $Z \subset \mathbb P^n$ $(n\geq 2)$ defined by $n$ general $(n-2)$--planes in an 
$(n-1)$--plane and one point out of the plane (a ``monomial star'').   This scheme
is similar to the type of varieties studied by Fatabbi, Harbourne, and Lorenzini
\cite{FHL}.  Specifically, we wish to consider the family of ideals
\begin{align*}
I_{Z} & = (x_0x_n, x_1x_n, \ldots, x_{n-1}x_n, x_0x_1 \cdots x_{n-1}) \\
            & = (x_0 \cdots x_{n-1},x_n) \cap (x_0, \ldots, x_{n-1}) = P_0 \cap P_1 \cap \cdots \cap P_{n-1} \cap P_n,
\end{align*}
where for $0 \leq i \leq n-1$
$P_i = (x_i, x_n)$ for $i=1,\ldots,n$ and $P_n = (x_0, \ldots, x_{n-1}).$

We wish to obtain information about containments of the form
$I_{Z}^{(m)} \subseteq I_{Z}^r$.  Theorem \ref{mainresult} enables us
to easily compute the Waldschmidt constant for these ideals, and consequently,
a lower bound on the (asymptotic) resurgence.

\begin{lemma}
With $Z \subseteq \mathbb{P}^n$ defined as above, we have
$$\widehat{\alpha}(I_{Z}) = \frac{2n-1}{n}.$$
\end{lemma}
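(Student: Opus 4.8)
The plan is to apply Theorem \ref{mainresult}, which identifies $\widehat\alpha(I_Z)$ with the value of the linear program attached to the primary decomposition $I_Z = P_0 \cap \cdots \cap P_{n-1} \cap P_n$, where $P_i = (x_i, x_n)$ for $0 \le i \le n-1$ and $P_n = (x_0,\ldots,x_{n-1})$. In the variables $y_0,\ldots,y_n$, this LP asks to minimize $y_0 + \cdots + y_n$ subject to $y_i + y_n \ge 1$ for $0 \le i \le n-1$, the single additional constraint $y_0 + \cdots + y_{n-1} \ge 1$ coming from $P_n$, and $\mathbf{y} \ge \mathbf{0}$. I would pin down its optimal value by producing matching feasible solutions for the LP and its dual.

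For the inequality $\widehat\alpha(I_Z) \le \frac{2n-1}{n}$ I would exhibit the feasible point $y_0 = \cdots = y_{n-1} = \tfrac1n$ and $y_n = \tfrac{n-1}{n}$: each constraint $y_i + y_n \ge 1$ holds with equality, the constraint $y_0 + \cdots + y_{n-1} = 1$ is met, and the objective equals $n \cdot \tfrac1n + \tfrac{n-1}{n} = \tfrac{2n-1}{n}$. For the reverse inequality I would argue directly: given any feasible $\mathbf{y}$, summing the $n$ inequalities $y_i + y_n \ge 1$ ($0 \le i \le n-1$) yields $\sum_{i=0}^{n-1} y_i + n y_n \ge n$; multiplying this by $\tfrac1n$ and adding $\tfrac{n-1}{n}$ times the inequality $\sum_{i=0}^{n-1} y_i \ge 1$ gives $\sum_{i=0}^{n} y_i \ge \tfrac{2n-1}{n}$. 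Since this holds at every feasible point, the optimal value is at least $\tfrac{2n-1}{n}$; equivalently, in the language of Lemma \ref{LPbounds}, the multipliers $\tfrac1n$ on each of the first $n$ constraints and $\tfrac{n-1}{n}$ on the last form a feasible dual solution of value $\tfrac{2n-1}{n}$. Combining the two bounds gives $\widehat\alpha(I_Z) = \tfrac{2n-1}{n}$.

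I do not expect a genuine obstacle here: the only things requiring care are setting up the constraint system with the correct index ranges (there are exactly $n$ height-two primes $P_0,\ldots,P_{n-1}$ together with the one height-$n$ prime $P_n$, so the matrix $A$ of Theorem \ref{mainresult} is $(n+1)\times(n+1)$) and checking feasibility of the two candidate solutions honestly. As a sanity check, for $n = 2$ the ideal $I_Z = (x_0x_2,\,x_1x_2,\,x_0x_1)$ is the edge ideal of the triangle $C_3$, and $\tfrac{2\cdot 2-1}{2} = \tfrac32 = \tfrac{\chi^*(C_3)}{\chi^*(C_3)-1}$, in agreement with Theorem \ref{thm:graphfamilies}.
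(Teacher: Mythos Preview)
Your proof is correct and follows essentially the same approach as the paper: both exhibit the primal feasible point $(\tfrac1n,\ldots,\tfrac1n,\tfrac{n-1}{n})$ and then certify optimality via the dual solution $(\tfrac1n,\ldots,\tfrac1n,\tfrac{n-1}{n})$. The only cosmetic difference is that the paper reaches the dual solution by observing that $A$ is symmetric with $\mathbf{b}=\mathbf{c}=\mathbf{1}$, whereas you arrive at the same multipliers by explicitly summing the constraints.
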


\begin{proof}
The second statement follows from Theorem \ref{resurgencebounds}. 
We know that 
$\displaystyle \widehat{\alpha}(I_{Z})$ is the value of the LP obtained by minimizing ${\bf b}^T{\bf y} = {\bf 1}^T {\bf y}$ subject to $A{\bf y} \geq {\bf 1} = {\bf c}$ and ${\bf y} \geq {\bf 0}$ where
$$A = \left[ \begin{array}{ccccccc}
1 & 0 & 0 & \cdots & 0 & 0 & 1\\
0 & 1 & 0 & \cdots & 0 & 0 & 1\\
\vdots & \vdots & \vdots & \vdots & \vdots & \vdots & \vdots \\
0 & 0 & 0  & \cdots & 0 & 1 & 1 \\
1 & 1 & 1 & \cdots & 1 & 1 & 0
\end{array}
\right].$$
A feasible solution is
\[{\bf y}^T = \begin{bmatrix} \frac{1}{n} & \frac{1}{n} & \cdots & \frac{1}{n}
& \frac{n-1}{n} \end{bmatrix}.\]
To see that this solution is optimal, note that the matrix $A$ is symmetric and ${\bf b} = {\bf c} = {\bf 1}$, and so this solution is also a feasible solution to the dual linear program.
Therefore,
$\displaystyle \widehat{\alpha}(I_{Z}) = {\bf 1}^T {\bf y} = (2n-1)/n.$
\end{proof}

We will compute the resurgence for the ideals $I_Z$. 



\begin{lemma}\label{powersymb}
Let $0 \neq I \subset R$ be a squarefree monomial ideal
such that $I^{(m)} = I^m$ for all $m \geq 0$.  Let
$y$ be a new variable and 
consider the ideal $(I, y)$ in $S = R[y]$. Then 
$$(I,y)^{(m)} = (I, y)^m ~~\mbox{for all $\geq 0$}.$$
\end{lemma}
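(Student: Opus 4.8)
The plan is to use the symbolic binomial theorem of Theorem \ref{LemmaSymbolicBinomialTheorem} together with the hypothesis $I^{(m)} = I^m$ to reduce the symbolic power of $(I,y)$ to ordinary powers. Observe first that $(I,y) = I + (y)$ is a sum of two squarefree monomial ideals in disjoint variables (those of $R$ and the single new variable $y$). The ideal $(y) \subseteq S' = k[y]$ satisfies $(y)^{(j)} = (y)^j = (y^j)$ trivially, since it is a principal prime. Therefore Theorem \ref{LemmaSymbolicBinomialTheorem} applies to give
\[
(I,y)^{(m)} = \sum_{j=0}^m I^{(m-j)} (y)^{(j)} = \sum_{j=0}^m I^{m-j} (y^j),
\]
where the last equality uses the hypothesis $I^{(m-j)} = I^{m-j}$ and the triviality $(y)^{(j)} = (y^j)$.

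Next I would identify the right-hand side with $(I,y)^m$ directly by expanding the ordinary power. Since $(I,y)^m = (I + (y))^m = \sum_{j=0}^m I^{m-j}(y^j)$ by the usual binomial expansion of a sum of ideals (keeping in mind that $I^0 = S$ and $(y)^0 = S$), the two sums coincide term by term. Hence $(I,y)^{(m)} = (I,y)^m$ for all $m \geq 0$, as desired.

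The only point requiring a little care is the verification that $(y)^{(j)} = (y)^j$ and that $(y)$ counts as a squarefree monomial ideal to which Lemma \ref{LemmaFactorSymbolicPowerProduct} and Theorem \ref{LemmaSymbolicBinomialTheorem} apply — but $(y)$ is generated by the squarefree monomial $y$, its unique associated prime is $(y)$ itself, and $(y)^j = (y^j) = (y)^j R_{(y)} \cap S'$, so it is its own symbolic power. One should also note the edge cases $I = 0$ is excluded by hypothesis, and $m = 0$ gives $S = S$ trivially. I do not anticipate any genuine obstacle here; the main "work" is simply recognizing that the symbolic binomial theorem is exactly the tool that converts the hypothesis on $I$ into the conclusion for $(I,y)$, with the new variable contributing only trivial symbolic powers.
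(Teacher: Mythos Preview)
Your proof is correct and follows essentially the same approach as the paper: both apply Theorem \ref{LemmaSymbolicBinomialTheorem} to $(I,y) = I + (y)$, use the hypothesis $I^{(m-j)} = I^{m-j}$ together with $(y)^{(j)} = (y)^j$, and recognize the resulting sum as the binomial expansion of $(I,y)^m$. Your version is slightly more detailed in checking the edge cases, but the argument is the same.
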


\begin{proof}
By Theorem \ref{LemmaSymbolicBinomialTheorem}, we have
$$(I,y)^{(m)} = \sum_{m_1+m_2=m}I^{(m_1)}(y)^{(m_2)} =
\sum_{m_1+m_2=m}I^{m_1}y^{m_2} = (I,y)^m.$$
\end{proof}

We now collect together a number of results that we will require regarding the 
symbolic and ordinary powers of $I_Z$.

\begin{lemma}\label{gensIZ} 
With $Z \subseteq \mathbb{P}^n$ defined as above, we have
\begin{enumerate}
\item[$(i)$] $I_{Z}^{(m)} = (x_0x_1 \cdots x_{n-1},x_n)^m
\cap (x_0,x_1, \ldots,x_{n-1})^m$.
\item[$(ii)$] 
\footnotesize
$I_Z^{(m)} =  \sum_{i=0}^{\lfloor \frac{(n-1)m}{n}\rfloor}
x_{n}^i(x_{0}\cdots x_{n-1})^{m-i} + \sum_{i=\lfloor
\frac{(n-1)m}{n}\rfloor + 1}^m x_{n}^i(x_{0}\cdots x_{n-1})^{m-i}
(x_{0},\ldots, x_{n-1})^{ni-(n-1)m}$
\normalsize
\item[$(iii)$] $I^s = \sum_{i=0}^s x_n^i (x_{0}\cdots x_{n-1})^{s-i} (x_{0},\ldots, x_{n-1})^i$.
\end{enumerate}
\end{lemma}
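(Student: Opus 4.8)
The plan is to derive all three parts from the minimal primary decomposition $I_Z = P_0 \cap \cdots \cap P_n$, where $P_i = (x_i, x_n)$ for $0 \le i \le n-1$ and $P_n = (x_0,\ldots,x_{n-1})$, together with Theorem~\ref{symbolicmonomial} and elementary monomial bookkeeping. For part~$(i)$, Theorem~\ref{symbolicmonomial} gives $I_Z^{(m)} = P_0^m \cap \cdots \cap P_{n-1}^m \cap P_n^m$, and since $P_n^m = (x_0,\ldots,x_{n-1})^m$ is already the second factor in the asserted identity, it suffices to show $\bigcap_{i=0}^{n-1}(x_i,x_n)^m = (x_0 x_1 \cdots x_{n-1}, x_n)^m$. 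Both ideals are monomial, so I would compare monomial generators: a monomial $x_0^{a_0}\cdots x_n^{a_n}$ lies in $\bigcap_{i=0}^{n-1}(x_i,x_n)^m$ exactly when $a_i + a_n \ge m$ for all $i \le n-1$, while $(x_0\cdots x_{n-1},x_n)^m = \sum_{j=0}^m x_n^j (x_0\cdots x_{n-1})^{m-j}$ contains it exactly when there is some $j$ with $a_n \ge j$ and $a_i \ge m-j$ for all $i \le n-1$; choosing $j = \min(a_n,m)$ shows these two conditions are equivalent.

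For part~$(ii)$ I would intersect $(x_0,\ldots,x_{n-1})^m$ with the first factor from part~$(i)$, written as $(x_0\cdots x_{n-1},x_n)^m = \sum_{i=0}^m x_n^i (x_0\cdots x_{n-1})^{m-i}$, distributing the intersection across the sum by iterating the modular law for monomial ideals $L\cap(J+K) = L\cap J + L\cap K$ (already used in the proof of Theorem~\ref{LemmaSymbolicBinomialTheorem}). The monomial $x_n^i(x_0\cdots x_{n-1})^{m-i}$ has $(x_0,\ldots,x_{n-1})$-degree $n(m-i)$. When $n(m-i) \ge m$, equivalently $i \le \lfloor (n-1)m/n \rfloor$, this monomial already lies in $(x_0,\ldots,x_{n-1})^m$, so intersecting the principal ideal it generates with $(x_0,\ldots,x_{n-1})^m$ returns that principal ideal unchanged. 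Otherwise, a monomial multiple $x_n^i(x_0\cdots x_{n-1})^{m-i}\nu$ lies in $(x_0,\ldots,x_{n-1})^m$ precisely when $\nu$ has $(x_0,\ldots,x_{n-1})$-degree at least $m - n(m-i) = ni - (n-1)m$, so the intersection returns $x_n^i(x_0\cdots x_{n-1})^{m-i}(x_0,\ldots,x_{n-1})^{ni-(n-1)m}$. Summing over $0 \le i \le m$ produces exactly the two-sum formula in the statement.

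For part~$(iii)$ I would rewrite the generating set as $I_Z = x_n(x_0,\ldots,x_{n-1}) + (x_0\cdots x_{n-1})$ and expand its $s$-th power using commutativity of ideal multiplication, $(A+B)^s = \sum_{i=0}^s A^i B^{s-i}$, with $A = x_n(x_0,\ldots,x_{n-1})$ and $B = (x_0\cdots x_{n-1})$. Since $A^i = x_n^i(x_0,\ldots,x_{n-1})^i$ and $B^{s-i} = (x_0\cdots x_{n-1})^{s-i}$ is principal, this rewrites as $\sum_{i=0}^s x_n^i(x_0\cdots x_{n-1})^{s-i}(x_0,\ldots,x_{n-1})^i$, which is the claim.

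The routine core — reducing $(i)$ and $(ii)$ to monomial membership and invoking the modular law, and the purely formal expansion in $(iii)$ — is straightforward. The one place that demands care, and which I expect to be the main obstacle, is the index bookkeeping in part~$(ii)$: verifying that the split between the two sums falls exactly at $\lfloor (n-1)m/n \rfloor$ (handling separately the cases where $(n-1)m/n$ is and is not an integer, so that the boundary index is placed in the correct sum) and that the exponent $ni - (n-1)m$ on $(x_0,\ldots,x_{n-1})$ is precisely the one forced by the degree inequality $n(m-i) + \deg(\nu) \ge m$.
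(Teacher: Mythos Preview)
Your proposal is correct and, for parts $(ii)$ and $(iii)$, proceeds exactly as the paper does: distribute the intersection over the binomial expansion of $(x_0\cdots x_{n-1},x_n)^m$ via the modular law and analyze each summand by its $(x_0,\ldots,x_{n-1})$-degree for $(ii)$, and expand $(A+B)^s$ with $A = x_n(x_0,\ldots,x_{n-1})$, $B = (x_0\cdots x_{n-1})$ for $(iii)$.

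The only real difference is in part $(i)$. The paper first recognizes $\bigcap_{i=0}^{n-1}(x_i,x_n)^m$ as the symbolic power $(x_0\cdots x_{n-1},x_n)^{(m)}$ and then invokes Lemma~\ref{powersymb} (which in turn rests on the symbolic binomial theorem, Theorem~\ref{LemmaSymbolicBinomialTheorem}) to conclude that this symbolic power equals the ordinary power. Your route instead verifies $\bigcap_{i=0}^{n-1}(x_i,x_n)^m = (x_0\cdots x_{n-1},x_n)^m$ directly by the monomial-membership check with $j = \min(a_n,m)$. Your argument is more elementary and self-contained, avoiding the detour through Lemma~\ref{powersymb}; the paper's route has the advantage of illustrating a use of that lemma, but yours is arguably cleaner for this particular identity.
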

\begin{proof} 
For $(i)$, by Lemma \ref{powersymb} we have 
\begin{eqnarray*} I_{Z}^{(m)} &=& (x_0,x_n)^m \cap
(x_1,x_n)^m \cap \ldots\cap (x_{n-1},x_n)^m \cap (x_0, \ldots,
x_{n-1})^m \\
&= &(x_0 \cdots x_{n-1},x_n)^{(m)} \cap (x_0, \ldots, x_{n-1})^m =
(x_0 \cdots x_{n-1},x_n)^m \cap (x_0, \ldots, x_{n-1})^m.
\end{eqnarray*}
For $(ii)$ we have
\footnotesize
\begin{eqnarray*}
I^{(m)} & =& (x_{0}\cdots x_{n-1},x_n)^m \cap (x_{0},\ldots, x_{n-1})^m = \sum_{i=0}^{m} \left ( x_n^i(x_{0}\cdots x_{n-1})^{m-i}
\cap (x_{0},\ldots, x_{n-1})^m \right )\\
& =& \sum_{i=0}^{\lfloor (n-1)m/n\rfloor} x_n^i(x_{0}\cdots
x_{n-1})^{m-i} +
\sum_{i=\lfloor (n-1)m/n\rfloor+1}^{m} {x_n^i (x_{0}\cdots x_{n-1})^{m-i}(x_{0},\ldots, x_{n-1})^{m-n(m-i)}}\\
& = &\sum_{i=0}^{\lfloor (n-1)m/n\rfloor} x_n^i(x_{0}\cdots
x_{n-1})^{m-i} +
\sum_{i=\lfloor (n-1)m/n\rfloor+1}^{m} {x_n^i (x_{0}\cdots x_{n-1})^{m-i}(x_{0},\ldots, x_{n-1})^{ni-(n-1)m}}.\\
\end{eqnarray*}
\normalsize
Finally, for $(iii)$, we have
\begin{eqnarray*}
I^s & =& (x_{0}\cdots
x_{n-1},x_0x_n,x_1x_n,\ldots,x_{n-1}x_{n})^s\\
& =& \sum_{i=0}^s (x_{0}\cdots x_{n-1})^{s-i} (x_n(x_{0},\ldots,
x_{n-1}))^i =\sum_{i=0}^s x_n^i (x_{0}\cdots x_{n-1})^{s-i}
(x_{0},\ldots, x_{n-1})^i.
\end{eqnarray*}
\end{proof}

\begin{theorem}
With $Z \subseteq \mathbb{P}^n$ defined as above, let $I = I_Z$.  Then
\begin{enumerate}
\item[$(i)$] if $m,s$ are positive integers with $\frac{m}{s}\geq \frac{n^2}{n^2-n+1}$, then $I^{(m)} \subseteq I^{s}$
\item[$(ii)$] $I^{(n^2k)} \not\subseteq I^{(n^2-n+1)k+1}$, for all integers $k\geq 0$.
\end{enumerate}
Consequently, $\rho(I_{Z}) = \rho_a(I_{Z}) = \frac{n^2}{n^2-n+1}$.
\end{theorem}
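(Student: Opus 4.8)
The plan is to prove the two containments/non-containments in parts $(i)$ and $(ii)$ using the explicit descriptions of $I^{(m)}$ and $I^s$ recorded in Lemma~\ref{gensIZ}, and then assemble them into the computation of the resurgence by combining with the Waldschmidt bound. First I would observe that the lower bound $\rho(I_Z)\ge\rho_a(I_Z)\ge \frac{n^2}{n^2-n+1}$ follows directly from part $(ii)$: the family of non-containments $I^{(n^2k)}\not\subseteq I^{(n^2-n+1)k+1}$, valid for all $k\ge 0$, shows that ratios $\frac{n^2k}{(n^2-n+1)k+1}$ — which approach $\frac{n^2}{n^2-n+1}$ from below and also, via the standard trick of passing to $I^{(tn^2k)}\not\subseteq I^{t((n^2-n+1)k)+1}$, give the asymptotic statement — force $\rho_a(I_Z)\ge \frac{n^2}{n^2-n+1}$. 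Then part $(i)$ gives the matching upper bound $\rho(I_Z)\le \frac{n^2}{n^2-n+1}$, since $I^{(m)}\subseteq I^s$ whenever $\frac{m}{s}\ge \frac{n^2}{n^2-n+1}$; as $\rho_a(I_Z)\le\rho(I_Z)$ always, all three quantities coincide.

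For part $(i)$, the approach is a direct monomial-by-monomial check using Lemma~\ref{gensIZ}$(ii)$ and $(iii)$. A typical generator of $I^{(m)}$ has the form $x_n^i(x_0\cdots x_{n-1})^{m-i}\cdot \mu$ where $\mu$ is a monomial in $x_0,\dots,x_{n-1}$ of degree $\max\{0, ni-(n-1)m\}$. I would split into the two ranges $0\le i\le \lfloor(n-1)m/n\rfloor$ and $i>\lfloor(n-1)m/n\rfloor$, and in each range exhibit such a monomial as a product of $s$ generators drawn from the description of $I^s$ in $(iii)$, namely factors of the shape $x_n^{j}(x_0\cdots x_{n-1})^{1-j}\cdot(\text{degree-}j\text{ piece in }x_0,\dots,x_{n-1})$ with $j\in\{0,1\}$; concretely one must distribute the total $x_n$-exponent $i$ among the $s$ factors (so $i\le s$ of them get a factor of $x_n$) and check that the remaining monomial degree in $x_0,\dots,x_{n-1}$ is at least what is needed. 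The hypothesis $\frac{m}{s}\ge\frac{n^2}{n^2-n+1}$ is exactly the inequality needed to make this bookkeeping work; it enters as a floor/ceiling estimate comparing $n(m-s)$-type quantities with $ni-(n-1)m$. This is the main obstacle: it is a somewhat delicate integer inequality argument and the edge cases near $i=\lfloor(n-1)m/n\rfloor$ require care, but it is routine in structure.

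For part $(ii)$, I would produce an explicit witness monomial. Setting $m=n^2k$ and $r=(n^2-n+1)k+1$, the value $\lfloor(n-1)m/n\rfloor = (n-1)nk$ is an integer, and I expect the witness to be the "corner" generator $x_n^{(n-1)nk}(x_0\cdots x_{n-1})^{n^2k-(n-1)nk} = x_n^{(n-1)nk}(x_0\cdots x_{n-1})^{nk}$, which lies in $I^{(m)}$ by Lemma~\ref{gensIZ}$(ii)$ (it sits at the boundary $i=\lfloor(n-1)m/n\rfloor$, so no extra $(x_0,\dots,x_{n-1})$-factor is required). One then checks this monomial is \emph{not} in $I^{r}$: using Lemma~\ref{gensIZ}$(iii)$, membership in $I^r$ would require writing it as a product of $r$ generators, forcing the $x_n$-exponent $(n-1)nk$ to be split so that at least $(n-1)nk$ of the $r$ factors contribute an $x_n$, but then the degree in $x_0,\dots,x_{n-1}$ forced by $I^r$ exceeds $nk\cdot n = n^2k$ (the actual degree present), a contradiction once $r>(n^2-n+1)k$; the "$+1$" is what tips the inequality. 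This degree count is the crux of $(ii)$ and is short once the right monomial is identified. Finally I would note that $\rho_a$ equals $\rho$ here because the lower bound from $(ii)$ is already asymptotic and the upper bound from $(i)$ applies to all $m/s$ in the given range, so no separate asymptotic argument is needed.
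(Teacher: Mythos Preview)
Your proposal is correct and follows essentially the same route as the paper: the same explicit decompositions from Lemma~\ref{gensIZ} drive part~$(i)$, the same witness monomial $x_n^{(n-1)nk}(x_0\cdots x_{n-1})^{nk}$ settles part~$(ii)$ via the same degree count in the $x_n$-variable versus the $x_0,\ldots,x_{n-1}$-variables, and the resurgence and asymptotic resurgence are assembled exactly as you describe. One small refinement: in part~$(i)$ the paper splits into \emph{three} cases rather than two, the extra one being $s<i\le m$ (your parenthetical ``so $i\le s$ of them get a factor of $x_n$'' tacitly assumes $i\le s$); in that range the target summand of $I^s$ is the top one $x_n^s(x_0,\ldots,x_{n-1})^s$, and the needed inequality reduces simply to $m\ge s$, so it is indeed routine once noticed.
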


\begin{proof} For $(i)$ it suffices to prove that each 
term in the decomposition of $I^{(m)}$ of Lemma \ref{gensIZ} is contained in 
$I^{s}$. By Lemma \ref{gensIZ} we have to analyze the following cases:

\noindent
{\it Case 1}: if $0\le i \le \lfloor \frac{(n-1)m}{n}\rfloor$, we will prove that 
$$x_n^i(x_0 \cdots x_{n-1})^{m-i} \in x_n^i 
(x_0\cdots x_{n-1})^{s-i}(x_0, \ldots,x_{n-1})^i.$$
This is equivalent to $(x_0\cdots x_{n-1})^{m-s} \in (x_0, \ldots,x_{n-1})^i$, which is further equivalent to $n(m-s) \ge i$. To prove the latter inequality holds, it is sufficient to show that $n(m-s)\ge \frac{(n-1)m}{n}$. Elementary manipulations show that this inequality is equivalent to the hypothesis $\frac{m}{s}\geq \frac{n^2}{n^2-n+1}$.

\noindent
{\it Case 2:} if $\lfloor \frac{(n-1)m}{n}\rfloor +1 \le i \le \min\{m,s\}$, we will prove that
$$x_n^i(x_0 \cdots x_{n-1})^{m-i}(x_0,\ldots,x_{n-1})^{ni - (n-1)m} \subseteq x_n^i 
(x_0\cdots x_{n-1})^{s-i}(x_0,\ldots,x_{n-1})^i.$$
This is equivalent to $(x_0\cdots x_{n-1})^{m-s} \in (x_0, \ldots,x_{n-1})^{(n-1)(m-i)}.$ 
The latter is equivalent to $n(m-s) \ge (n-1)(m-i)$, or $m+(n-1)i\geq ns$, which holds true
since $m+(n-1)i > m+(n-1) \cdot \frac{(n-1)m}{n}=m\cdot \frac{n^2-n+1}{n}\geq ns$. The last inequality uses  the hypothesis $\frac{m}{s}\geq \frac{n^2}{n^2-n+1}$.

\noindent
{\it Case 3:} if $ \min\{m,s\} < i \le m$, we will prove that
$$x_n^i(x_0\cdots x_{n-1})^{m-i}(x_0,\ldots,x_{n-1})^{ni - (n-1)m} \subseteq x_n^{s} 
(x_0,\ldots,x_{n-1})^{s}.$$
For that, it suffices to prove that $n(m-i) \ge s +(n-1)m -ni$, which is equivalent to 
$m \ge s$. Either this inequality is satisfied or else this case is vacuous. 

For $(ii)$ the following notation will be used in the proof. For a monomial 
$f$, $\deg_1f$ denotes the total degree of the monomial part involving the variables 
$x_0,\ldots, x_{n-1}$, while $\deg_2f$ denotes 
exponent of $x_n$ in $f$. 
For two monomials $f$ and $g$, if $f$ is a multiple of $g$, then clearly
$\deg_1f\ge \deg_1 g, \text{ and } \deg_2f\ge \deg_2g.$

Consider the monomial $f = (x_0\cdots x_{n-1})^{nk} x_n^{(n-1)nk}$ in $I^{(n^2k)}$. 
Now assume that $f\in I^{s}$, where $s = (n^2-n+1)k+1$. Then there exists a 
minimal generator $g$ in $I^{s}$ such that $g|f$. Such a monomial $g$ has the 
form $(x_0\cdots x_{n-1})^{s-\alpha} x_0^{t_0}\cdots x_{n-1}^{t_{n-1}}x_n^\alpha$, where 
$t_0,\ldots, t_{n-1}$ are non-negative integers such that $t_0+ \cdots + t_{n-1} = \alpha$.
 We have
$\deg_2 f = (n-1)nk \ge \deg_2 g = \alpha.$ 
Furthermore,
$\deg_1 f = n^2 k \ge \deg_1g = n(s-\alpha) +\alpha.$ 
In particular, this implies
$$\alpha\ge \frac{n(s-nk)}{n-1} > \frac{n(n^2-2n+1)k}{n-1} = n(n-1)k,$$
which is a contradiction, so $(ii)$ follows.

The non-containment relations in part $(ii)$ yield $$\rho(I) \geq \sup \left\{ \frac{n^2k}{(n^2-n+1)k + 1} ~\vert~ k>0\right\}=\lim_{k \to \infty} \frac{n^2k}{(n^2-n+1)k + 1} = \frac{n^2}{n^2-n+1},$$ while part $(i)$ shows that the opposite inequality holds. We conclude that  $\rho(I_{Z}) = \frac{n^2}{n^2-n+1}$. 

As for the asymptotic resurgence, since $\rho_a(I)\leq \rho(I)$, we have $\rho_a(I)\leq \frac{n^2}{n^2-n+1}$. From part $(ii)$, since $I^{((n^2-n+1)k+1)t}\subseteq I^{((n^2-n+1)kt+1}$ for $t\geq 1$,  we deduce
$$I^{(n^2kt)} \not\subseteq I^{((n^2-n+1)k+1)t} \text{ for } t\geq 1.$$
It follows that 
$$\rho_a(I) \geq\lim_{k \to \infty} \frac{n^2k}{(n^2-n+1)k + 1} = \frac{n^2}{n^2-n+1},$$
allowing us to conclude that  $\rho_a(I_{Z}) = \frac{n^2}{n^2-n+1}$. 
\end{proof}

\begin{remark}
Similar to Remark \ref{nonequal1}, we have the following inequalities
\begin{eqnarray*}
\frac{\alpha(I_Z)}{\widehat\alpha(I_Z)} = \rho_a(I_Z)&=&\rho(I_Z)= \frac{\omega(I_Z)}{\widehat\alpha(I_Z)}=\frac{4}{3}, \text{ if } n=2\\
\frac{2n}{2n-1} =\frac{\alpha(I_Z)}{\widehat\alpha(I_Z)} < \rho_a(I_Z)&=&\rho(I_Z)< \frac{\omega(I_Z)}{\widehat\alpha(I_Z)}=\frac{n^2}{2n-1}, \text{ if } n>2
\end{eqnarray*}
for the family of ideals $I_Z$. The case $n=2$ corresponds to $Z$ being a set of 3 points in $\mathbb{P}^2$.  
\end{remark}


\end{document}